\definecolor{black}{rgb}{0.0, 0.0, 0.0}
\definecolor{red}{rgb}{1.0, 0.5, 0.5}
\newcommand{\margnote}[1]{
\ifthenelse{\boolean{shownotes}}%
{\marginpar{\raggedright\tiny\texttt{#1}}}%
{}%
}
\newcommand{\hole}[1]{
\ifthenelse{\boolean{shownotes}}%
{\begin{center} \fbox{ \rule {.25cm}{0cm} \rule[-.1cm]{0cm}{.4cm}
\parbox{.85\textwidth}{\begin{center} \texttt{#1}\end{center}} \rule
{.25cm}{0cm}}\end{center}} {} }
\title[Pressureless Euler/compressible Navier-Stokes equations]{The Cauchy problem for the pressureless Euler/isentropic Navier-Stokes equations}
\author[Choi]{Young-Pil Choi}
\address[Young-Pil Choi]{\newline Fakult\"at f\"ur Mathematik \newline
Technische Universit\"at M\"unchen, Boltzmannstr\ss e, 85748, Garching bei M\"unchen, Germany}
\email{ychoi@ma.tum.de}
\author[Kwon]{Bongsuk Kwon}
\address[Bongsuk Kwon]{\newline Department of Mathematical Sciences, School of Natural Science \newline
Ulsan National Institute of Science and Technology, Ulsan 689-798, Korea}
\email{bkwon@unist.ac.kr}
\numberwithin{equation}{section}
\newtheorem{theorem}{Theorem}[section]
\newtheorem{lemma}{Lemma}[section]
\newtheorem{corollary}{Corollary}[section]
\newtheorem{proposition}{Proposition}[section]
\newtheorem{remark}{Remark}[section]
\newcommand{\R}{\mathbb R}
\newcommand{\ls}{\lesssim}
\newcommand{\T}{\mathbb T}
\newcommand{\N}{\mathbb N}
\newcommand{\bq}{\begin{equation}}
\newcommand{\eq}{\end{equation}}
\newcommand{\lt}{\left}
\newcommand{\rt}{\right}
\newcommand{\na}{q}
\newcommand{\mc}{\mathcal{C}}
\newcommand{\pa}{\partial}
\newcommand{\mi}{\mathcal{I}}
\newcommand{\mn}{\mathcal{N}}
\newcommand{\lag}{\langle}
\newcommand{\rag}{\rangle}
\newcommand{\rmmm}{\rho^{m+1,m}}
\newcommand{\rmm}{\rho^{m,m-1}}
\newcommand{\ummm}{u^{m+1,m}}
\newcommand{\umm}{u^{m,m-1}}
\newcommand{\wmmm}{w^{m+1,m}}
\newcommand{\wmm}{w^{m,m-1}}
\newcommand{\vmmm}{v^{m+1,m}}
\newcommand{\vmm}{v^{m,m-1}}
\newcommand{\nmmm}{n^{m+1,m}}
\def\charf {\mbox{{\text 1}\kern-.30em {\text l}}}
\begin{document}
\allowdisplaybreaks

\date{\today}

\subjclass[2010]{35Q30, 35Q70, 70B05, 35Q83}
\keywords{Vlasov equations, pressureless Euler equations, compressible Navier-Stokes equations, spray models, coupled hydrodynamic equations, global existence of classical solutions, large-time behavior.}



\begin{abstract}
We present a new hydrodynamic model consisting of the pressureless Euler equations and the isentropic compressible Navier-Stokes equations where the coupling of two systems is through the drag force.
This  coupled system can be  derived, in the hydrodynamic limit, from the particle-fluid equations that are frequently used to study the medical sprays, aerosols and sedimentation problems. For the proposed system, we first construct the local-in-time  classical solutions in an appropriate $L^2$ Sobolev space. We also establish the \emph{a priori} large-time behavior estimate by constructing a Lyapunov functional measuring the fluctuation of momentum and mass from the averaged quantities, and using this together with the bootstrapping argument, we obtain the global classical solution.
The large-time behavior estimate asserts that the  velocity functions of the pressureless Euler and the compressible Navier-Stokes equations are aligned exponentially fast as time tends to infinity. 
\end{abstract}

\maketitle \centerline{\date}

\tableofcontents
%
%
%
%
\section{Introduction}
In this paper, we prove the global existence of classical solutions and obtain their large-time behavior for the coupled hydrodynamic system consisting of the pressureless Euler equations and the isentropic compressible Navier-Stokes equations where the coupling is through the drag force. Specifically, 
 the hydrodynamic system is given by
\begin{align}\label{two-hydro-eqns}
\begin{aligned}
&\pa_t \rho + \nabla_x \cdot (\rho u) = 0,\qquad (x,t) \in \T^3 \times \R_+,\cr
&\pa_t(\rho u) + \nabla_x \cdot (\rho u \otimes u) = - \rho (u - v),\cr 
&\pa_t \na + \nabla_x \cdot (\na v) = 0, \cr
&\pa_t(\na v) + \nabla_x \cdot (\na v \otimes v) + \nabla_x p(\na) + Lv = \rho(u-v),
\end{aligned}
\end{align}
where the pressure $p$ and the Lam\'e operator $L$ are given by
\begin{align*}
\begin{aligned}
& p(\na) = \na^\gamma \quad \mbox{with} \quad \gamma  > 1,\cr
& Lv = -\mu \Delta_x v - (\mu + \lambda)\nabla_x ( \nabla_x \cdot v ) \quad \mbox{with} \quad \mu > 0 \quad \mbox{and} \quad \lambda + 2\mu > 0.
\end{aligned}
\end{align*}
Here $t\ge0$ is time, $x\in\T^3$ is the spatial coordinate in the three dimensional periodic domain $\T^3$, $\rho(x,t)$ and $\na(x,t)$ represent the particle density and the fluid density at a domain $(x,t) \in \T^3 \times \R_+$, and $u(x,t)$ and $v(x,t)$ represent the corresponding bulk velocities for $\rho(x,t)$ and $\na(x,t)$, respectively.

Equations \eqref{two-hydro-eqns} can be formally derived from the particle-fluid equations, called the Vlasov/compressible Navier-Stokes equations, under the assumption that the particle distribution is mono-kinetic. More precisely, let $f(x,\xi,t)$ be the distribution function of particles at the position-velocity $(x,\xi) \in \T^3 \times \R^3$ at time $t$, and $n$ and $v$ be the fluid density and velocity, respectively. 
Then the motion of the particles and fluid can be described by the following kinetic-fluid equations:
\begin{align}\label{kin-flu}
\begin{aligned}
&\pa_t f + \xi \cdot \nabla_x f + \nabla_\xi \cdot((v - \xi)f) = 0, \quad (x,\xi,t) \in \T^3 \times \R^3 \times \R_+,\cr
&\pa_t \na + \nabla_x \cdot (\na v) = 0, \quad (x,t) \in \T^3 \times \R_+,\cr
&\pa_t (\na v) + \nabla_x \cdot (\na v \otimes v) + \nabla_x p(\na) + Lv = -\int_{\R^3} (v-\xi)f\,d\xi.
\end{aligned}
\end{align}

The kinetic-fluid models describing the interactions between particles and fluids have received considerable attentions due to their medical and engineering applications \cite{BBJM,BDM,VASG,Will}. 
%
%
%
%
%
%
For the modeling and physical backgrounds for the particle-fluid equations, we refer the readers to \cite{Rourke, RM}. For the system \eqref{kin-flu} with a global alignment force, the first author and his collaborators showed the global existence of strong solutions and also obtained the large-time behavior estimates under suitable conditions on the initial data and viscosity $\mu$ in \cite{BCHK3}. For the interactions with incompressible fluids, the global well-posedness and the large-time behavior of solutions are
studied in \cite{BCHK, BCHK2, Choi, CK2, CL}. When the diffusion effect $\Delta_\xi f$ is considered in the Vlasov equations $\eqref{kin-flu}_1$, the system \eqref{kin-flu} is called Vlasov-Fokker-Planck/compressible Navier-Stokes equations. For this system, the global existence of weak solutions is studied in a bounded domain with the Dirichlet or reflection boundary conditions in \cite{MV}, and global existence of the classical solution in the periodic spatial domain is discussed in \cite{CKL}. In \cite{DL}, the existence of global strong solutions and large-time behavior for the Vlasov-Fokker-Planck/compressible Euler equations have been studied in both the whole space and periodic spatial domain.

Here we shall give a brief outline for the derivation of the system \eqref{two-hydro-eqns} from \eqref{kin-flu}. To this end, the macroscopic variables of the local mass $\rho$ and momentum $\rho u$ for the distribution function $f$ are introduced as follows.
\[
\rho(x,t) := \int_{\R^3} f(x,\xi,t)\,d\xi \quad \mbox{and} \quad (\rho u)(x,t):=\int_{\R^3} \xi f(x,\xi,t)\,d\xi \quad \mbox{for} \quad (x,t) \in \T^3 \times \R_+.
\]
We next set the energy-flux $\hat q$, the pressure tensor $\hat\sigma$, and the temperature $\theta$ given by the fluctuation terms:
\begin{align*}
\begin{aligned}
\hat q(x,t) &:= \frac12\int_{\R^3} |\xi - u(x,t)|^2 (\xi - u(x,t)) f(x,\xi,t)\, d\xi, \quad \hat\sigma(x,t) := \int_{\R^3} (\xi - u(x,t))\otimes (\xi - u(x,t)) f(x,\xi,t)\, d\xi,
\end{aligned}
\end{align*}
and
\[
(\rho\theta)(x,t):= \frac12\int_{\R^3} |\xi - u(x,t)|^2 f(x,\xi,t)\,d\xi.
\]
First, by integrating the equation \eqref{kin-flu} in $\xi$, we obtain  the continuity equation:
\[
\frac{d\rho}{dt} + \nabla_x \cdot(\rho u) = 0.
\]
For the momentum equation, we multiply $\eqref{kin-flu}_1$ by $\xi$ and again integrating in $\xi$ to deduce that
\[
\frac{d(\rho u)}{dt}  + \nabla_x \cdot (\rho u \otimes u) + \nabla_x \cdot \hat\sigma = -\rho(u-v).
\]
Then, we multiply the equations $\eqref{kin-flu}_1$ by $\frac{|\xi|^2}{2}$ and integrate in $\xi$ to obtain
\begin{align}\label{f-ener-func1}
\begin{aligned}
\frac12\frac{d}{dt}\int_{\R^3} |\xi|^2 f\,d\xi &= -\frac12\int_{\R^3} |\xi|^2 \lt( \nabla_x \cdot (\xi f) + \nabla_\xi \cdot ((v - \xi)f) \rt) d\xi =:I_1 + I_2,
\end{aligned}
\end{align}
where $I_1$ and $I_2$ are given by 
\begin{align}\label{f-ener-func2}
\begin{aligned}
I_1 &= - \frac12\nabla_x \cdot \lt( \int_{\R^3} |\xi - u|^2 \xi f \,d\xi + \rho|u|^2 u + 2\int_{\R^3} (\xi - u)\otimes (\xi - u) u f\, d\xi\rt)= - \nabla_x \cdot \lt( \hat q + \rho(|u|^2 + \theta)u + \hat\sigma u\rt),\cr
I_2 &= \int_{\R^3} \xi \cdot (v - \xi) f\,d\xi = \rho u \cdot v - \int_{\R^3} |\xi|^2 f\,d\xi= \rho u \cdot v - \lt( \rho|u|^2 + 2\rho \theta\rt)= 2\rho \theta - \rho u \cdot(u-v).
\end{aligned}
\end{align}
Then combining \eqref{f-ener-func1} and \eqref{f-ener-func2}, we obtain
\[
\frac{d}{dt}\lt( \rho\lt(\frac{|u|^2}{2} + \theta\rt)\rt) + \nabla_x \cdot \lt( \lt(\rho(|u|^2 + \theta) + \hat\sigma\rt)u + \hat q\rt) = 2\rho\theta - \rho u \cdot (u-v).
\]
Hence we collect all the equations of macroscopic variables and those of the compressible fluid variables $(n,v)$ to obtain
\begin{align}\label{comp-sys}
\begin{aligned}
&\pa_t \rho + \nabla_x \cdot(\rho u) = 0,\quad (x,t) \in \T^3 \times \R_+,\cr
&\pa_t(\rho u) + \nabla_x \cdot (\rho u \otimes u) + \nabla_x \cdot \hat\sigma = -\rho(u-v),\cr
&\pa_t\lt( \rho\lt(\frac{|u|^2}{2} + \theta\rt)\rt) + \nabla_x \cdot \lt( \lt(\rho(|u|^2 + \theta) + \hat\sigma\rt)u + \hat q\rt) = 2\rho\theta - \rho u \cdot (u-v),\cr
&\pa_t \na + \nabla_x \cdot (\na v) = 0,\cr
&\pa_t (\na v) + \nabla_x \cdot (\na v \otimes v) + \nabla_x p(\na) + Lv = -\int_{\R^3} (v-\xi)f\,d\xi.
\end{aligned}
\end{align}
Notice that the system \eqref{comp-sys} is not closed due to the energy-flux $\hat q$. In order to close the system \eqref{comp-sys}, we assume that the fluctuations are negligible and the velocity distribution is mono-kinetic, i.e., $f(x,\xi,t) = \rho(x,t)\delta(\xi - u(x,t))$, where $\delta$ denotes the standard Dirac delta function. Then,  it is straightforward to check that the system \eqref{comp-sys} reduces to our proposed model \eqref{two-hydro-eqns}. We remark that \eqref{two-hydro-eqns} can also be derived from the Vlasov-Boltzmann/compressible Navier-Stokes equations with the strong inelastic collision effect between particles following the similar argument as in \cite{DM}. We also refer the reader to \cite{CCK,CG,Choi2, GJV,GJV2, HKK, MV2} for the hydrodynamic limit from the particle-fluid equations to the coupled hydrodynamic equations and references therein. 

For the sake of simplicity, 
 we shall reformulate system \eqref{two-hydro-eqns} in the perturbation framework.
 Setting $\na(t, x) := 1+n(t, x)$, we rewrite the system  \eqref{two-hydro-eqns} as follows.
\begin{align}\label{two-hydro-eqns-1}
\begin{aligned}
&\pa_t \rho + \nabla_x \cdot (\rho u) = 0,  \qquad (x,t) \in \T^3 \times \R_+,\cr
&\pa_t(\rho u) + \nabla_x \cdot (\rho u \otimes u) = - \rho (u - v),
\cr
&\pa_t n + \nabla_x \cdot ((n+1)v) = 0, 
\cr
&\pa_t((n+1)v) + \nabla_x \cdot ((n+1)v \otimes v) + \nabla_x p(n+1) + Lv = \rho(u-v) 
\end{aligned}
\end{align}
with initial data:
\bq\label{ini-two-hydro-eqns}
(\rho(x,t),u(x,t),n(x,t),v(x,t))|_{t = 0} = (\rho_0(x), u_0(x), n_0(x), v_0(x)).
\eq
Note that the initial density $n_0$ of compressible flow  has zero mass, i.e., $\int_{\T^3} n_0(x)\, dx = 0$. 

Before we state our main result, we briefly review some relevant existence theory and results  of the large-time behaviors for the  pressureless Euler equations and  compressible Navier-Stokes equations. In the absent of the interactions between the fluids, i.e., no drag force term, we  have the pressureless Euler equations from $\eqref{two-hydro-eqns-1}_1$-$\eqref{two-hydro-eqns-1}_2$:
\begin{align}\label{pre-Euler}
\begin{aligned}
&\pa_t \rho + \nabla_x \cdot (\rho u) = 0,  \cr
&\pa_t(\rho u) + \nabla_x \cdot (\rho u \otimes u) = 0.
\end{aligned}
\end{align}
The pressureless Euler equations \eqref{pre-Euler} have been studied in \cite{KPS, SZ, VDFN, Zeld} to account for the formation of the large scale structures in the universe, and  it has also been used to describe the motion of free particles which stick each other upon the collision \cite{BG,WRS}. One of the main difficulties in analyzing the system \eqref{pre-Euler} arises from the formation of singularities. Specifically,   no matter how smooth the initial data are, the equations may develop a  singularity such as a $\delta$-shock in finite-time. For this reason, it is natural to extend the notion of solutions to the measure-valued solutions. The  existence of measure solutions of Riemann problem is first investigated in \cite{Bou} for the one-dimensional case, and the global existence and the behavior of entropic weak solutions to the system \eqref{pre-Euler} are obtained in \cite{BG,WRS}. We refer the reader to \cite{Chen} and the references therein for general survey of the Euler equations.
Similarly, by neglecting the drag force term  in $\eqref{two-hydro-eqns-1}_3$-$\eqref{two-hydro-eqns-1}_4$, one has the compressible Navier-Stokes equations:
\begin{align}\label{comp-NS}
\begin{aligned}
&\pa_t n + \nabla_x \cdot ((n+1)v) = 0, \cr
&\pa_t((n+1)v) + \nabla_x \cdot ((n+1)v \otimes v) + \nabla_x p(n+1) + Lv = 0.
\end{aligned}
\end{align}
The well-posedness for  \eqref{comp-NS} have been extensively studied in \cite{Cho,CCK1,CK1,Dan, Des,F-N-P,Hoff,Itaya,Lions,MN1,MN2, Ponce}. In particular, Lions provided  general results for weak solutions to the multidimensional compressible Navier-Stokes equations with large initial data \cite{Lions}, and later the existence and regularity of  weak solutions are further studied in \cite{Des,F-N-P}. The local existence of strong solutions is investigated in \cite{CCK1,CK1}, and global existence of classical solutions is studied in \cite{Dan, MN1,MN2} when the initial density $n_0 +1$ is bounded away from zero. For the large-time behaviors of the solutions, we refer the reader to \cite{MN1, MN2, Ponce} for the whole space, and to \cite{KK1, KK2, KS} for the half space or the exterior domain. More recently, the exponential decay of weak solutions in bounded domain is obtained in \cite{F-Z-Z} when the density $n$ has an upper bound.

In the present work, we show the global existence of  classical solutions, and obtain their large-time behavior, which asserts that the fluid velocities are aligned exponentially fast. As mentioned above,
  the pressureless Euler equations \eqref{pre-Euler} may  develop the $\delta$-shock in finite-time even with smooth initial data. Concerning this issue, an interesting question 
is whether the drag force coupling two systems can prevent the formation of 
the finite-time singularities, and whether the system can admit the global classical solutions.
Another natural question is, if the global solution exists, how the solutions behave as time tends to infinity. 
To this end, we employ a Lyapunov functional measuring the fluctuation of momentum and mass from the corresponding averaged quantities:
\begin{equation}\label{Lyap}
\mathcal{L}(t):= \int_{\T^3} \rho |u-m_c|^2 dx + \int_{\T^3} (n+1)| v - j_c|^2 dx + |m_c - j_c|^2 + \int_{\T^3} n^2 dx, 
\end{equation}
where $m := \rho u$, $j := (n+1)v$, 
\begin{equation}\label{def1}
m_c(t) := \frac{\int_{\T^3} \rho u\, dx}{\int_{\T^3} \rho \,dx}, \quad j_c(t) := \int_{\T^3} (n+1) v \,dx, \quad \mbox{and} \quad \rho_c(t) := \int_{\T^3} \rho \, dx.
\end{equation}
By energy estimates, using a dissipative structure of the Navier-Stokes equations and the drag force term coupling two systems, we establish the large-time estimate in Proposition \ref{prop:large-t}, under an appropriate smallness condition, that
\[
\mathcal{L}(t) \leq C_1\mathcal{L}_0e^{-\lambda t} \quad  \text{ for some constants } C_1, \lambda>0.
\]
One of the most important features in establishing the estimate is to derive the dissipative terms using the Bogovskii's argument in the setting of spatial periodic domain. 
It is interesting that this asymptotic behavior estimate (\emph{a priori}) plays an important role in constructing the time-global solutions. We shall give a brief outline of the procedure here.
The asymptotic behavior estimate implies that 
\begin{equation}\label{rlb11}
\int_{\T^3} \rho |u-m_c|^2 dx \le C_1  \mathcal{L}_0e^{-\lambda t}.
\end{equation}
On the other hand, using the characteristic method together with the smallness assumptions $\| \nabla_x u \|_{L^\infty(\T^3 \times (0,T))}$ $\le \epsilon_1$, we obtain the \emph{a priori} lower bound of $\rho(x,t)$, i.e., 
\begin{equation}\label{rlb2} 
\rho(x,t) \ge 
\lt(\min_{x\in\T^3} \rho_0(x)\rt) \exp \lt(- \int_0^t \| \nabla_x u (\cdot,\tau ) \|_{L^\infty( \T^3)} d\tau\rt) \ge \delta_0 e^{-\epsilon_1t}.
\end{equation}
The estimates \eqref{rlb11} and \eqref{rlb2} imply the $L^2$ exponential decay, $\| u(\cdot,t) - m_c(t) \|^2_{L^2(\T^3)} \lesssim e^{-(\lambda-\epsilon_1)t}$, where $\lambda-\epsilon_1>0$. Moreover, upon the interpolation of this and an appropriate high order $L^2$-Sobolev norm of $u$, say $\| u \|_{H^{s+2}(\T^3)}$, we have the exponential decay for $\|\nabla_x u(\cdot,t)\|_{H^s(\T^3)}\lesssim e^{-ct}$ for some $c>0$. This together with Sobolev embedding and \eqref{rlb2} implies the density function $\rho(x,t)$ has  a uniform lower bound, i.e., there exists a positive constant  $\bar \rho$ such that $\rho(x,t)\ge \bar \rho$ for all $x\in\T^3$ and $t\ge0$ 
(see Lemma \ref{lem:rn}
 and Corollary \ref{low-bd-rho}). 
From this, we note that a combination of the drag force term and the dissipative structure of the Navier-Stokes equations can prevent the formation of the finite-time singularities, at least for the variables for the pressureless Euler system. We can reinterpret the drag forcing term as the ``relative damping'', through which stabilizing effect of the Navier-Stokes is transferred to the pressureless Euler system. 
A combination of the large-time estimate and standard energy estimates of the Navier-Stokes equations enables us to obtain the {\it a priori} uniform bounds for the coupled system. Using this, we prove the global existence of classical solutions and justify the time-asymptotic  alignment behavior of \eqref{two-hydro-eqns-1}.

Here we introduce several notations used throughout the paper. For a function $f(x)$, $\|f\|_{L^p}$ denotes the usual $L^p(\T^3)$-norm. $f \ls g$ represents that there exists a positive constant $C>0$ such that $f \leq C g$. We also denote by $C$ a generic positive constant depending only on the norms of the data, but independent of $T$. For simplicity, we often drop $x$-dependence of differential operators $\nabla_x$, that is, $\nabla f := \nabla_x f$ and $\Delta f := \Delta_x f$. For any nonnegative integer $s$, $H^s$ denotes the $s$-th order $L^2$ Sobolev space. $\mc^s([0,T];E)$ is the set of $s$-times continuously differentiable functions from an interval $[0,T]\subset \R$ into a Banach space $E$, and $L^p(0,T;E)$ is the set of the $L^p$ functions from an interval $(0,T)$ to a Banach space $E$. $\nabla^s$ denotes any partial derivative $\pa^\alpha$ with multi-index $\alpha, |\alpha| = s$. 
Before we state our main result, we define the solution space:
\begin{align*}
\begin{aligned}
\mathcal{I}_s(T):= \big\{ (\rho,u,n,v): &\, \rho \in \mc([0,T];H^s(\T^3)) \cap\mc^1([0,T];H^{s-1}(\T^3)) ,
\cr 
&  u\in \mc([0,T];H^{s+2}(\T^3)) \cap \mc^1([0,T];H^{s+1}(\T^3)),  \cr
& n \in \mc([0,T];H^{s+1}(\T^3)) \cap\mc^1([0,T];H^{s}(\T^3)),\quad \mbox{and} \cr
& v\in \mc([0,T];H^{s+1}(\T^3))\cap L^2(0,T;H^{s+2}(\T^3)) \big\}.
\end{aligned}
\end{align*}
\begin{theorem}\label{thm:main} Let $s > \frac52$. Suppose that the initial data $(\rho_0,u_0,n_0,v_0)$ satisfy the following conditions:
\begin{align*}
\begin{aligned}
& (i)\,\,\,\,\,\,  \min_{x \in \T^3} \rho_0(x) > 0, \quad 1 + \min_{x\in\T^3} n_0(x) > 0,\cr
& (ii) \,\,\,\,(\rho_0,u_0,n_0, v_0) \in H^s(\T^3) \times H^{s+2}(\T^3) \times H^{s+1}(\T^3) \times H^{s+1}(\T^3).
\end{aligned}
\end{align*}
If $\|\rho_0\|_{H^s} + \|u_0\|_{H^{s+2}} + \|n_0\|_{H^{s+1}} + \|v_0\|_{H^{s+1}} \leq \varepsilon_1$ for sufficiently small $\varepsilon_1>0$, the initial value problem \eqref{two-hydro-eqns-1}-\eqref{ini-two-hydro-eqns}  has a unique global classical solution $(\rho,u,n,v) \in \mathcal{I}_s(\infty)$ satisfying
\begin{equation}\label{large-behav}
\mathcal{L}(t) \leq C\mathcal{L}_0e^{-\lambda t}, \quad t \geq 0
\end{equation}
for some positive constants $\lambda$ and  $C > 0$ independent on $t$, where $\mathcal{L}(t)$ is defined in \eqref{Lyap}.
\end{theorem}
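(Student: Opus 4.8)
The plan is to follow the three–step scheme sketched in the introduction: (1) construct a unique local-in-time classical solution in $\mi_s(T)$ for some $T>0$; (2) establish \emph{a priori} bounds that are uniform in time — both the exponential decay of the Lyapunov functional $\mathcal{L}(t)$ and a uniform-in-time lower bound on $\rho$, together with uniform control of the high-order Sobolev norms; and (3) combine these through a standard continuation argument to extend the local solution to all $t\ge0$. The decay statement \eqref{large-behav} is then read off directly from the \emph{a priori} estimate.

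For the local existence I would set up a linearized iteration $(\rho^m,u^m,n^m,v^m)\mapsto(\rho^{m+1},u^{m+1},n^{m+1},v^{m+1})$, solving at each step the linear transport equations for $(\rho^{m+1},u^{m+1})$ along the flow generated by $u^m$ and the linear parabolic system for $(n^{m+1},v^{m+1})$ with viscous operator $L$, coupled through the drag terms evaluated at the previous iterate. Hyperbolic-type energy estimates (no gain of derivatives) for the pressureless Euler block and parabolic-type estimates for the Navier–Stokes block give a uniform bound on the iterates in $\mi_s(T)$ for $T$ small depending only on the size of the data, provided one first propagates the positivity $\min_x\rho^m>0$ and $1+\min_x n^m>0$ by the method of characteristics. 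Contraction in a lower-order norm (e.g. $H^{s-1}\times H^{s+1}\times H^{s}\times H^{s}$) yields convergence and uniqueness, and the time regularity follows from the equations. This part is routine, the only structural subtlety being that the Euler block loses one derivative — which is why $u$ is taken two derivatives smoother than $\rho$ in the definition of $\mi_s$.

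The core is the \emph{a priori} estimate. I would first prove Proposition~\ref{prop:large-t}: differentiating \eqref{Lyap}, the convective terms drop out by conservation of $\rho$ and $n$, the drag force produces $-\lt(\int_{\T^3}\rho|u-v|^2\,dx+\ldots\rt)$, and the Lamé operator yields the viscous dissipation $-\int_{\T^3}\bigl(\mu|\nabla v|^2+(\mu+\lambda)|\nabla\cdot v|^2\bigr)dx$. To recover the missing $\|n\|_{L^2}^2$ dissipation one tests the momentum equation against the Bogovskii operator applied to $n$ (legitimate since $n$ has zero mean on $\T^3$), so that $\nabla p(n+1)$ contributes a coercive $-c\|n\|_{L^2}^2$ up to lower-order remainders absorbed under the smallness assumption; this is the key periodic-domain ingredient. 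Summing these pieces with suitable weights gives $\frac{d}{dt}\mathcal{L}\le-\lambda\mathcal{L}+(\text{cubic and higher})$, hence $\mathcal{L}(t)\le C_1\mathcal{L}_0e^{-\lambda t}$ as long as the solution remains small. Combining $\int_{\T^3}\rho|u-m_c|^2\,dx\le C_1\mathcal{L}_0e^{-\lambda t}$ with the characteristic bound \eqref{rlb2} under the bootstrap hypothesis $\|\nabla u\|_{L^\infty(\T^3\times(0,T))}\le\varepsilon_1$ yields $\|u(\cdot,t)-m_c(t)\|_{L^2}^2\ls e^{-(\lambda-\varepsilon_1)t}$; interpolating against the bootstrapped bound on $\|u\|_{H^{s+2}}$ upgrades this to $\|\nabla u(\cdot,t)\|_{H^s}\ls e^{-ct}$, so $\int_0^\infty\|\nabla u(\cdot,\tau)\|_{L^\infty}\,d\tau<\infty$ and \eqref{rlb2} becomes a genuine uniform lower bound $\rho\ge\bar\rho>0$ (Lemma~\ref{lem:rn}, Corollary~\ref{low-bd-rho}). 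Finally the high-order energy estimates for the full system — $H^s$ for $\rho$, $H^{s+2}$ for $u$, $H^{s+1}$ for $(n,v)$, with the parabolic gain $\int_0^t\|v\|_{H^{s+2}}^2$ — close in a Grönwall loop using the uniform lower bound on $\rho$ and the decay, producing a bound uniform in $T$ which, fed back, verifies all the bootstrap hypotheses and the smallness for all time.

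I expect the main obstacle to be closing the bootstrap: the pressureless Euler block carries no regularizing mechanism of its own, so controlling $\|u\|_{H^{s+2}}$ and preventing $\nabla u$ from amplifying relies entirely on the ``relative damping'' transmitted through $-\rho(u-v)$ together with the decay of $\mathcal{L}$. One must verify that the constants in the energy inequality for the Euler variables depend only on the (now uniform) lower bound of $\rho$ and on quantities that are time-integrable, so that the accumulated forcing stays finite; the delicate quantitative point is that the rate $\lambda$ from Proposition~\ref{prop:large-t} must dominate the rate $\varepsilon_1$ of possible growth in \eqref{rlb2}, which is arranged by choosing $\varepsilon_1$ small relative to $\lambda$. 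Establishing the Bogovskii-type dissipation estimate in the periodic setting, and tuning the weights in $\mathcal{L}$ so that the coupling cross terms are sign-definite, is the other technically demanding step.
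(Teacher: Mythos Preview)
Your proposal is correct and follows essentially the same strategy as the paper: local existence by iteration, the \emph{a priori} decay of $\mathcal{L}$ via a modified energy functional incorporating a Bogovskii correction (the paper writes this as $\mathcal{E}^\sigma=\mathcal{E}-2\sigma\int(n+1)(v-j_c)\mathcal{B}[n]\,dx$ rather than directly perturbing $\mathcal{L}$, but the mechanism is the one you describe), the bootstrap from decay plus characteristics to a uniform lower bound on $\rho$ and exponential decay of $\|\nabla u\|_{H^s}$, and then a hierarchy of high-order energy estimates closed by a continuation argument. The only cosmetic differences are that the paper runs the Cauchy-sequence argument in the cruder norm $L^2\times H^1\times L^2\times L^2$ rather than the $H^{s-1}\times H^{s+1}\times H^s\times H^s$ you suggest, and organizes the high-order step through an explicit inductive family of functionals $E_k$ with carefully chosen weights $\delta_k$; your Gr\"onwall-loop description is the same idea at a coarser level of detail.
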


\begin{remark} 
 We note that \eqref{large-behav} implies that the velocity functions, $u(x,t)$ and $v(x,t)$ are aligned, and converge to $\frac{1}{1 + \rho_c(0)}(m_c(0) + j_c(0))$ in $L^\infty(\T^3)$ as $t\to\infty$. To see this, we first find that $\rho(x,t)$ has a uniform bound from below, i.e., $\rho(x,t)\ge \bar \rho$ for some constant $\bar \rho>0$(see Corollary \ref{low-bd-rho}), and this together with \eqref{large-behav} implies $\| u - m_c\|^2_{L^2} \lesssim e^{-\lambda t}$. By interpolating $L^2(\T^3)$ and $H^3(\T^3)$ norms, and using Sobolev inequality, we deduce that $\| u - m_c \|_{L^\infty} \lesssim \| u - m_c \|_{H^2} \lesssim e^{-\tilde\lambda t}$ for some $\tilde\lambda > 0$.
Similarly we also obtain that $\| v - j_c \|_{L^\infty} \lesssim e^{-\tilde\lambda t}.$ On the other hand, it follows from the conservation of the total momentum(see Lemma \ref{lem:energy}) that 
\[
\rho_c(0)m_c(t) + j_c(t) = \rho_c(0)m_c(0) + j_c(0)\quad \mbox{for} \quad t \geq 0,
\]
and this together with \eqref{large-behav} implies that
\[
(1 + \rho_c(0))\lt| m_c(t) - \frac{1}{1 + \rho_c(0)}(m_c(0) + j_c(0))\rt| \to 0 \quad \mbox{as} \quad t \to \infty.
\]
Moreover, it is an immediate consequence from \eqref{large-behav} that $| m_c(t) - j_c(t) | \to 0$. Combining these, one can conclude that 
$$\lt\| u(\cdot,t) - \frac{1}{1 + \rho_c(0)}(m_c(0) + j_c(0)) \rt\|_{L^\infty} + \lt\| v(\cdot,t) -\frac{1}{1 + \rho_c(0)}(m_c(0) + j_c(0)) \rt\|_{L^\infty} \to 0 \quad \text{ as } \quad t\to\infty.$$
That is, the two fluid velocities converge to the averaged initial total momentum in $L^\infty(\T^3)$ as time evolves exponentially fast.


%
%
%
%
\end{remark}

The rest of this paper is organized as follows.
 In Section \ref{sec3}, we study the local existence of the unique classical solutions to  system \eqref{two-hydro-eqns-1}. For this, we use the fact that the equations $\eqref{two-hydro-eqns-1}_3$-$\eqref{two-hydro-eqns-1}_4$ have the structure of symmetric hyperbolic system. We linearize the system, and show the existence and the uniform boundedness of the solutions for the linearized system. Then, we construct the approximated solutions, and provide that they are Cauchy sequences in the proposed Sobolev spaces. Section \ref{sec4} is devoted to discuss the {\it a priori} estimate for the large-time behavior of solutions which actually gives us the uniform bounds of the density of pressureless Euler equations. Finally, in Section \ref{sec5}, we provide the {\it a priori} estimates of solutions in the proposed Sobolev spaces in Theorem \ref{thm:main} with the aid of the large-time behavior estimate. This concludes that the local solutions can be extended to the global solution and that the large-time behavior estimates are justified for the global solutions.

\section{Local existence of classical solutions}\label{sec3}
In this section, we discuss local existence of the unique classical solution. 
For this, we  shall use the structure of symmetric hyperbolic system for $\eqref{two-hydro-eqns-1}_3$-$\eqref{two-hydro-eqns-1}_4$. Our system \eqref{two-hydro-eqns-1} can be rewritten as 
\begin{align}\label{local-sys}
\begin{aligned}
&\pa_t \rho + \nabla \cdot (\rho u) = 0, \quad (x,t) \in \T^3 \times \R_+,\cr
&\pa_t (\rho u) + \nabla \cdot (\rho u \otimes u) = -\rho(u-v),\cr
&A^0(w)\pa_t w + \sum_{j=1}^3 A^j(w)\pa_j w = A^0(w)E_1(n,v) + A^0(w)E_2(\rho,u,n,v),
\end{aligned}
\end{align}
where $w = (n,v)^T$, 
\[
A^0(w) =
\left( \begin{array}{cc}
\gamma (1+n)^{\gamma-2} & 0 \\
0 & (1+n) \mathbb{I}_{3 \times 3} \\
\end{array} \right),
\]
\[
A^j(w) = A^0(w)
\left( \begin{array}{cccc}
v_j & (1 + n)\delta_{1j} & (1 + n)\delta_{2j} & (1 + n)\delta_{3j}\\
\gamma (1 + n)^{\gamma-2}\delta_{1j} & v_j & 0 & 0 \\
\gamma (1 + n)^{\gamma-2}\delta_{2j} & 0 & v_j & 0 \\
\gamma (1 + n)^{\gamma-2}\delta_{3j} & 0 & 0 & v_j \\
\end{array} \right), 
\]
\[
E_1(n,v) =\frac{1}{1+n}
\left( \begin{array}{cccc}
0 & 0 & 0 & 0\\
0 & (\displaystyle -Lv)_1 & 0 & 0 \\
0 & 0 & (\displaystyle -Lv)_2 & 0  \\
0 & 0 & 0 & (\displaystyle -Lv)_3  \\
\end{array} \right),
\]
and
\[
E_2(\rho,u,n,v) = \frac{1}{1+n}
\left( \begin{array}{cccc}
0 & 0 & 0 & 0 \\
0 & \displaystyle \rho(u_1-v_1) & 0 & 0 \\
0 & 0 & \displaystyle \rho(u_2-v_2) & 0  \\
0 & 0 & 0 & \displaystyle \rho(u_3-v_3) \\
\end{array} \right).
\]
%
%
\newcommand{\scon}{s>\frac52}
Now we present the local existence result.
\begin{theorem}\label{thm:local}
Let $\scon$. For any positive constants $\epsilon_0< M_0$, 
there is a positive constant $T_0$ depending only on $\epsilon_0$ and $M_0$ such that if $\|\rho_0\|_{H^s} + \|u_0\|_{H^{s+2}} + \|n_0\|_{H^{s+1}} + \|v_0\|_{H^{s+1}} \leq \epsilon_0$, then the Cauchy problem \eqref{local-sys} has a unique classical solution $(\rho,u,n,v)\in \mi_s(T_0)$   satisfying
\[
\sup_{0 \leq t \leq T_0}\lt(\|\rho(t)\|_{H^s} + \|u(t)\|_{H^{s+2}} + \|n(t)\|_{H^{s+1}} + \|v(t)\|_{H^{s+1}}\rt) \leq M_0.
\]
\end{theorem}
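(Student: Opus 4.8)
The plan is to construct the solution by a standard iteration scheme that exploits the triangular structure of \eqref{local-sys}: the pair $(n,v)$ solves a symmetric hyperbolic system (quasilinear, with lower-order forcing that is genuinely parabolic in $v$ through the Lam\'e term $Lv$), while $(\rho, \rho u)$ solves transport-type equations driven by $v$. First I would set up the linearized problem: given an iterate $(\rho^{m}, u^{m}, n^{m}, v^{m})$, define $(n^{m+1},v^{m+1})$ as the solution of the \emph{linear} symmetric hyperbolic--parabolic system obtained by freezing the coefficients $A^0(w^m), A^j(w^m)$ and the source terms $E_1, E_2$ at the previous iterate, and define $\rho^{m+1}$ from the continuity equation $\pa_t\rho^{m+1} + \nabla\cdot(\rho^{m+1} u^m) = 0$ by the method of characteristics, and $u^{m+1}$ from the momentum equation written in nonconservative form $\pa_t u^{m+1} + (u^m\cdot\nabla)u^{m+1} = -(u^{m+1}-v^m)$, which is again linear transport with damping once $\rho^{m+1}>0$ is known. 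The characteristic representation immediately gives the lower bound $\rho^{m+1}(x,t) \ge (\min_x\rho_0)\exp(-\int_0^t\|\nabla u^m\|_{L^\infty}\,d\tau) > 0$ and similarly a positive lower bound for $1+n^{m+1}$, so that $A^0(w^{m+1})$ stays uniformly positive definite on a short time interval; this is what makes the scheme well-defined.

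Next I would derive the uniform-in-$m$ a priori bounds on a time interval $[0,T_0]$ with $T_0 = T_0(\epsilon_0, M_0)$. For the hyperbolic block one uses the Friedrichs-type energy estimate: multiply the third equation of \eqref{local-sys} (at iteration level) by $\nabla^\alpha$, pair with $A^0(w^m)\nabla^\alpha w^{m+1}$ in $L^2$, sum over $|\alpha|\le s+1$, and use commutator (Moser/Kato--Ponce) estimates to absorb the terms where derivatives fall on the coefficients; the term $A^0 E_1$ contributes $-Lv^{m+1}$ which, after integration by parts, yields the good dissipation $\mu\|\nabla v^{m+1}\|_{H^{s+1}}^2 + (\mu+\lambda)\|\nabla\cdot v^{m+1}\|_{H^{s+1}}^2$ on the left — this is where the $L^2(0,T;H^{s+2})$ regularity of $v$ in $\mi_s(T_0)$ comes from — while $A^0 E_2$ is a benign quadratic forcing controlled by $\|\rho\|_{H^s}\|u-v\|_{H^{s+1}}$ using $s>5/2$ so that $H^s$ is an algebra that acts on $H^{s+1}$. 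For $\rho$ one differentiates the transport equation, does an $L^2$ energy estimate at order $s$, and bounds $\|\rho^{m+1}\|_{H^s}$ by $\|\rho_0\|_{H^s}\exp(C\int_0^t\|u^m\|_{H^{s+1}}\,d\tau)$ via Gr\"onwall; for $u$ one estimates $\|u^{m+1}\|_{H^{s+2}}$ the same way from the damped transport equation, noting that the damping $-(u^{m+1}-v^m)$ only helps and that the $H^{s+2}$ norm is propagated because there are no derivative losses in a linear transport equation. Choosing $T_0$ small depending on $M_0$ closes the induction: $\|\rho^{m+1}(t)\|_{H^s} + \|u^{m+1}(t)\|_{H^{s+2}} + \|n^{m+1}(t)\|_{H^{s+1}} + \|v^{m+1}(t)\|_{H^{s+1}} \le M_0$ for all $t\le T_0$ and all $m$, along with uniform lower bounds on $\rho^{m+1}$ and $1+n^{m+1}$.

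Then I would show $(\rho^m, u^m, n^m, v^m)$ is Cauchy in the lower-norm space $\mc([0,T_0]; H^{s-1}\times H^{s+1}\times H^s\times H^s)$ by subtracting consecutive equations: the differences $\delta\rho = \rho^{m+1}-\rho^m$, etc., satisfy the same linear systems with right-hand sides that are (uniformly bounded coefficients) times $(\delta\rho^{m-1},\delta u^{m-1},\delta n^{m-1},\delta v^{m-1})$, so a Gr\"onwall argument in the lower norm gives a contraction factor $\le CT_0 < 1$ for $T_0$ small. The limit $(\rho,u,n,v)$ then lies in the high-norm space by weak-$*$ lower semicontinuity of the $M_0$ bound, and a standard interpolation/Aubin--Lions argument upgrades convergence enough to pass to the limit in the nonlinear terms; the time-continuity with values in the top spaces (rather than merely $L^\infty$ in time) follows from the equations themselves and the Lions--Magenes-type argument for symmetric hyperbolic systems, giving $(\rho,u,n,v)\in\mi_s(T_0)$. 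Uniqueness is the same contraction estimate applied to two solutions with the same data. The main obstacle I anticipate is organizing the energy estimate for the coupled block so that the parabolic gain from $Lv$ is correctly extracted at the top order $s+1$ (with the correct functional-analytic setting of the symmetrizer $A^0$ having $H^{s+1}$, hence $C^1$, regularity since $s>5/2$) while simultaneously keeping the transport estimates for $\rho$ and $u$ from losing derivatives — in particular verifying that the regularity indices chosen in $\mi_s(T)$ (namely $\rho\in H^s$, $u\in H^{s+2}$, $n,v\in H^{s+1}$) are mutually consistent under all the product and commutator estimates, e.g. that $\rho(u-v)\in H^{s+1}$ given only $\rho\in H^s$, which forces $u,v\in H^{s+1}$ and is exactly why $u$ is taken two derivatives smoother so that the drag term in the $v$-equation does not degrade the hyperbolic estimate.
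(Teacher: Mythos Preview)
Your proposal is correct and follows essentially the same iteration/energy-estimate scheme as the paper: linearize, obtain uniform $\mathcal{N}^s$-bounds by Friedrichs-type energy estimates (extracting the parabolic dissipation from $Lv^{m+1}$ exactly as you describe), prove the sequence is Cauchy in a lower norm, interpolate, and recover the top-order time-continuity. The only differences are cosmetic: the paper freezes the drag term completely as $-(u^m-v^m)$ rather than your $-(u^{m+1}-v^m)$, proves the Cauchy property in the bare norms $\mathcal{C}([0,T_0];L^2)\times\mathcal{C}([0,T_0];H^1)\times\mathcal{C}([0,T_0];L^2)\times(\mathcal{C}([0,T_0];L^2)\cap L^2(0,T_0;H^1))$ rather than one order below top, and recovers the top time-continuity via a time-reversal argument for the hyperbolic components and parabolic smoothing for $v$ rather than a Lions--Magenes argument---none of which changes the substance.
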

In what follows, we shall give a brief outline of the proof for local existence. 
%
%
%
%
\subsection{Solvability of the associated linear system} As a first step, we shall study the existence and uniqueness of the classical solutions for the  linearized system associated with \eqref{local-sys}. 

We set the norm of the function $W:=(\rho, u, n, v) \in \mi(T;s)$ as
\[
\|W\|_{\mathcal{N}^s} := \|\rho\|_{H^s} + \|u\|_{H^{s+2}} + \|n\|_{H^{s+1}} + \|v\|_{H^{s+1}}.
\]
For a given $\overline W := (\bar\rho,\bar u, \bar n, \bar v) \in \mathcal{I}(T;s)$, we consider the associated linear system:
\begin{align}\label{li-sys-a}
\begin{aligned}
&\pa_t \rho + \bar u \cdot \nabla \rho + \rho \nabla \cdot \bar u = 0,\cr
&\rho \pa_t u + \rho \bar u \cdot \nabla u = - \rho (\bar u - \bar v),\cr
&A^0(\bar w) \pa_t w + \sum_{j=1}^3 A^j(\bar w)\pa_j w = A^0(\bar w)E_1(\bar n, v) + A^0(\bar w)E_2(\bar \rho, \bar u, \bar n, \bar v),
\end{aligned}
\end{align}
with the initial data $W_0\in {\mn^s}$, where $\bar w = (\bar n, \bar v)^T$. Then, one can show that the system \eqref{li-sys-a}  has a unique solution  $W \in \mathcal{I}(T;s)$. Specifically we have the following lemma.
\begin{lemma}\label{lem:ext-linear1} Let $\scon$. Suppose that $\overline W \in \mathcal{I}(T ;s)$ and $W_0\in {\mn^s}$. Then the initial value problem \eqref{li-sys-a} with initial data $W_0\in {\mn^s}$ has a unique solution $W\in \mathcal{I}(T;s)$.
\end{lemma}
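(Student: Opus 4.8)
The plan is to solve the three blocks of \eqref{li-sys-a} in a decoupled, sequential manner, since the coupling in the linearized system is only through the coefficients $\overline{W}$. First I would solve the transport equation $\pa_t\rho + \bar u\cdot\nabla\rho + \rho\,\nabla\cdot\bar u = 0$ for $\rho$ by the method of characteristics: define the flow map $X(t;x)$ generated by $\bar u(\cdot,t)$, which is well-defined and $C^1$ because $\bar u\in \mc([0,T];H^{s+2})$ with $s>\tfrac52$ gives $\bar u\in \mc([0,T];C^1)$; then $\rho$ along characteristics satisfies a linear ODE and one reads off $\rho(x,t) = \rho_0(X(0;X^{-1}(t;x)))\exp(-\int_0^t \nabla\cdot\bar u\,d\tau)$, so in particular $\rho$ stays strictly positive. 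Propagation of $H^s$ regularity is the standard commutator estimate: apply $\pa^\alpha$ for $|\alpha|\le s$, use the Moser/Kato–Ponce product and commutator inequalities to bound $\|[\pa^\alpha,\bar u\cdot\nabla]\rho\|_{L^2}\ls \|\nabla\bar u\|_{H^{s-1}\cap L^\infty}\|\rho\|_{H^s}$ (here using $s>\tfrac52$ and $\bar u\in H^{s+2}$), and close a Grönwall estimate on $\|\rho(t)\|_{H^s}^2$. Time continuity $\rho\in\mc([0,T];H^s)$ follows by the usual strong-convergence/weak-continuity upgrade, and $\rho\in\mc^1([0,T];H^{s-1})$ directly from the equation.

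Next, with $\rho$ now a known positive function, solve for $u$ from $\pa_t u + \bar u\cdot\nabla u = -(\bar u - \bar v)$ — note that after dividing by $\rho>0$ this is simply a linear transport equation for each component of $u$ with a \emph{known} right-hand side $-(\bar u-\bar v)\in\mc([0,T];H^{s+1})\subset \mc([0,T];H^{s+2})$? — more carefully, the source lies in $H^{s+1}$, but since the transport operator does not lose derivatives, we get $u\in\mc([0,T];H^{s+2})$ provided $u_0\in H^{s+2}$ and the source is in $\mc([0,T];H^{s+2})$; since $\bar u,\bar v\in H^{s+1}$ only, one instead transports in $H^{s+1}$ to match, and recovers $H^{s+2}$ because the right side $\bar v - \bar u$ together with $u_0\in H^{s+2}$ feeds the Duhamel formula $u(x,t) = u_0(\Phi_0) - \int_0^t (\bar u-\bar v)(\Phi_\tau,\tau)\,d\tau$ along the flow $\Phi$ of $\bar u$; the $H^{s+2}$ bound then follows from the same commutator estimates applied at order $s+2$, using that composition with the $C^{s+2}$-in-space flow preserves $H^{s+2}$. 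Again $u\in\mc^1([0,T];H^{s+1})$ from the equation.

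Finally, the third block $A^0(\bar w)\pa_t w + \sum_j A^j(\bar w)\pa_j w = A^0(\bar w)(E_1(\bar n,v) + E_2(\bar\rho,\bar u,\bar n,\bar v))$ is a linear symmetric hyperbolic system for $w=(n,v)^T$ with \emph{given} symmetric coefficient matrices $A^0(\bar w), A^j(\bar w)$ (with $A^0$ uniformly positive definite since $1+\bar n$ is bounded below, using the smallness/continuity of $\bar n$), plus a zeroth-order source and the term $A^0(\bar w)E_1(\bar n,v)$ which contains $-L v/(1+\bar n)$, i.e. a \emph{second-order} term in $v$. So this is not a pure first-order hyperbolic system: it is hyperbolic-parabolic. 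I would treat it by the standard energy method for linear symmetric hyperbolic–parabolic systems: apply $\pa^\alpha$, $|\alpha|\le s+1$, pair with $A^0(\bar w)\pa^\alpha w$ in $L^2$, integrate by parts using symmetry of $A^j$ so the first-order terms contribute only $\|\mathrm{div}\,A^j(\bar w)\|_{L^\infty}\|w\|_{H^{s+1}}^2$-type terms (controlled via $\bar w\in H^{s+1}$, $s>\tfrac52$), the viscous term $-\int \pa^\alpha(Lv/(1+\bar n))\cdot\pa^\alpha v$ yields a good dissipation $\mu\|\nabla\pa^\alpha v\|_{L^2}^2$ plus lower-order commutators absorbed by Young's inequality, and the source terms are bounded by Moser estimates. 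Grönwall then gives the a priori bound $\sup_{[0,T]}\|w(t)\|_{H^{s+1}}^2 + \int_0^T\|\nabla v\|_{H^{s+1}}^2\,dt \le C(\overline W, W_0, T)$, hence $v\in L^2(0,T;H^{s+2})$. Existence of a solution with this regularity is obtained by a Friedrichs mollification / Galerkin or parabolic-regularization scheme (mollify $A^j$ and add $\epsilon\Delta$ to the $n$-equation), deriving $\epsilon$-uniform bounds and passing to the limit; uniqueness follows from the same energy estimate applied to the difference of two solutions (an $L^2$ estimate suffices). Time continuity of $n,v$ in the top norms is the usual Bona–Smith-type argument.

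The main obstacle I expect is the careful bookkeeping of the mixed hyperbolic–parabolic energy estimate for the $w$-block at the correct regularity level $H^{s+1}$ (with the viscous term giving $v\in L^2H^{s+2}$), in particular handling the commutators between $\pa^\alpha$, the variable coefficient $1/(1+\bar n)$, and the operator $L$, and ensuring all error terms are genuinely lower order so they can be absorbed into the viscous dissipation or closed by Grönwall; everything else (the two transport equations) is routine. One should also keep track that the constants in Lemma 2.2 depend only on $\|\overline W\|_{\mi(T;s)}$, $\|W_0\|_{\mn^s}$ and $T$ (and the lower bound on $1+\bar n$), which is what the subsequent iteration/fixed-point argument for Theorem 2.1 will need.
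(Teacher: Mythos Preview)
Your plan is essentially the approach the paper intends: the paper's own proof of this lemma is a one-line deferral to ``standard linear theory for the transport equations and hyperbolic system,'' and your three-block decomposition (characteristics for $\rho$, transport for $u$, symmetric hyperbolic--parabolic energy method for $w=(n,v)$) is exactly that standard theory spelled out.

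There is one genuine slip in your handling of the $u$-equation. You correctly worry that propagating $u\in H^{s+2}$ requires the source $-(\bar u-\bar v)$ at that level, and you then try to recover $H^{s+2}$ via a Duhamel argument starting from an $H^{s+1}$ source---which would not work (Duhamel does not gain a derivative over the forcing). What you missed is that the definition of $\mathcal{I}_s(T)$ already places $\bar v\in L^2(0,T;H^{s+2}(\T^3))$, not merely $\mc([0,T];H^{s+1})$. Hence $\bar u-\bar v\in L^2(0,T;H^{s+2})$, and the $H^{s+2}$ energy estimate for $u$ closes directly by Gr\"onwall with a time-integrated right-hand side,
\[
\|u(t)\|_{H^{s+2}}^2 \le C\|u_0\|_{H^{s+2}}^2 e^{C(1+M)t} + Ce^{C(1+M)t}\int_0^t \|\bar v(\tau)\|_{H^{s+2}}^2\,d\tau < \infty.
\]
This is precisely how the paper closes the $u$-estimate for the iterates in the appendix (see the line ``Since $\bar v\in L^2(0,T_0;H^{s+2})$\ldots''). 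With this correction the rest of your outline is sound.
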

\begin{proof}
This can be proven by a standard linear theory for the transport equations and hyperbolic system with an appropriate  modification of regularity. 
\end{proof}
%
%
%
%
Now, we  construct the approximation sequence $W^m = (\rho^m,u^m,n^m,v^m)$ for the system \eqref{local-sys} by solving the linear system:
\begin{align}\label{approx-sys}
\begin{aligned}
&\pa_t \rho^{m+1} + u^m \cdot \nabla \rho^{m+1} + \rho^{m+1}\nabla \cdot u^m = 0,\cr
&\rho^{m+1}\pa_t u^{m+1} + \rho^{m+1}u^m \cdot \nabla u^{m+1} = -\rho^{m+1}(u^m - v^m),\cr
&A^0(w^m) \pa_t w^{m+1} + \sum_{j=1}^3 A^j(w^m)\pa_j w^{m+1} = A^0(w^m)E_1(n^m,v^{m+1}) + A^0(w^m)E_2(W^m),
\end{aligned}
\end{align}
with the initial data and first iteration step defined by
\[
W^m(x)|_{t=0} = W_0 \quad \mbox{for all} \quad n \geq 1, x \in \T^3,
\]
and
\[
W^0(x,t) = W_0, \quad (x,t) \in \T^3 \times \R_+.
\]
Here, for notational convenience, we set $w^m = (n^m,v^m)$. Then, by Lemma \ref{lem:ext-linear1}, one has that the approximation sequence $\{W^m\}_{m=0}^\infty$ is well-defined. Furthermore, by a standard energy method for the transport equations and hyperbolic system, we can obtain the uniform bound for $\{W^m\}_{m=0}^\infty$ with a suitable choice of time $T_0$ as follow.
\begin{lemma}\label{prop:invar} Let $\scon$. For any positive constants $\epsilon_0< M_0$, there exists $T_0>0$ such that if $\|W_0\|_{\mn^s} \leq \epsilon_0$, then for each $m \geq 0$, $W^m \in \mi_s(T_0)$ is well-defined and
\[
\sup_{0 \leq t \leq T_0}\|W^m(t)\|_{\mn^s} \leq M_0 \quad \mbox{for all} \quad m \geq 0.
\]
\end{lemma}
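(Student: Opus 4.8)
The plan is to establish the uniform bound by induction on $m$, running a Gr\"onwall-type energy estimate on each linear subsystem of \eqref{approx-sys} and then choosing $T_0$ small enough (depending only on $\epsilon_0$ and $M_0$) to absorb all the time-integrated error terms. The induction hypothesis is that $\sup_{0\le t\le T_0}\|W^m(t)\|_{\mn^s}\le M_0$; the base case $m=0$ is trivial since $W^0\equiv W_0$ and $\|W_0\|_{\mn^s}\le\epsilon_0<M_0$. For the inductive step, I would treat the three blocks in turn, always using the coefficients built from $W^m$ (which satisfy the $M_0$-bound by hypothesis, hence also $1+n^m$ is bounded above and below away from zero on $[0,T_0]$ after possibly shrinking $T_0$, using $n^m(0)=n_0$ with $\|n_0\|_{H^{s+1}}$ small and $\partial_t n^m$ controlled).

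First, for the density equation $\partial_t\rho^{m+1}+u^m\cdot\nabla\rho^{m+1}+\rho^{m+1}\nabla\cdot u^m=0$: apply $\nabla^\alpha$ for $|\alpha|\le s$, pair with $\nabla^\alpha\rho^{m+1}$ in $L^2$, use the commutator (Moser/Kato--Ponce) estimates to bound $\|[\nabla^\alpha,u^m\cdot\nabla]\rho^{m+1}\|_{L^2}\lesssim\|\nabla u^m\|_{H^s}\|\rho^{m+1}\|_{H^s}$ and similarly for the $\rho^{m+1}\nabla\cdot u^m$ term, obtaining $\frac{d}{dt}\|\rho^{m+1}\|_{H^s}^2\lesssim C(M_0)\|\rho^{m+1}\|_{H^s}^2$, so $\|\rho^{m+1}(t)\|_{H^s}\le\epsilon_0 e^{C(M_0)t}$. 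Next, for the velocity equation, since $\rho^{m+1}>0$ (a lower bound that itself comes from the characteristic-flow formula $\rho^{m+1}(x,t)=\rho_0(X^{-1})\exp(-\int_0^t\nabla\cdot u^m\,ds)$, bounded below on $[0,T_0]$), divide out $\rho^{m+1}$ to get a transport equation for $u^{m+1}$ with source $-(u^m-v^m)$; differentiating up to order $s+2$, pairing in $L^2$, and using that the transport coefficient is $u^m\in H^{s+2}$ (so the commutator bound closes at level $s+2$) gives $\|u^{m+1}(t)\|_{H^{s+2}}\le\epsilon_0 e^{C(M_0)t}+C(M_0)t$. Finally, for the symmetric hyperbolic block $A^0(w^m)\partial_t w^{m+1}+\sum_j A^j(w^m)\partial_j w^{m+1}=A^0(w^m)E_1(n^m,v^{m+1})+A^0(w^m)E_2(W^m)$, use the symmetrizer $A^0$: form $\sum_{|\alpha|\le s+1}\int A^0\nabla^\alpha w^{m+1}\cdot\nabla^\alpha w^{m+1}\,dx$, differentiate in time, and use $\partial_t A^0,\partial_j A^j\in L^\infty_{t,x}$ (controlled by $\|\partial_t w^m\|_{H^s}$ and $\|w^m\|_{H^{s+1}}$, both $\le C(M_0)$), the antisymmetry of the principal part after integration by parts, the Moser estimates for the commutators $[\nabla^\alpha,A^j(w^m)]\partial_j w^{m+1}$, and the bound on the right-hand side --- here $E_1$ contributes the good viscous term $-Lv^{m+1}$, which after integration by parts yields a nonnegative dissipation $\mu\|\nabla\nabla^\alpha v^{m+1}\|_{L^2}^2+(\mu+\lambda)\|\nabla\cdot\nabla^\alpha v^{m+1}\|_{L^2}^2$ on the left (giving the $L^2(0,T;H^{s+2})$ control of $v$), while $E_2$ is lower order, bounded by $C(M_0)$. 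One thus obtains $\frac{d}{dt}\|w^{m+1}\|_{H^{s+1}}^2 + c\|\nabla v^{m+1}\|_{H^{s+1}}^2 \le C(M_0)(1+\|w^{m+1}\|_{H^{s+1}}^2)$, hence $\|w^{m+1}(t)\|_{H^{s+1}}^2 \le (\epsilon_0^2+C(M_0)t)e^{C(M_0)t}$.

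Collecting the three estimates, $\|W^{m+1}(t)\|_{\mn^s}\le\Phi(\epsilon_0,M_0,t)$ where $\Phi$ is continuous, increasing in $t$, and $\Phi(\epsilon_0,M_0,0)=\epsilon_0<M_0$; therefore there is $T_0=T_0(\epsilon_0,M_0)>0$ with $\Phi(\epsilon_0,M_0,T_0)\le M_0$, which closes the induction and also confirms (after shrinking $T_0$ if needed) the auxiliary lower bound $1+n^{m+1}>0$ used at the next step. Since $T_0$ never depends on $m$, this proves the lemma.

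The main obstacle I anticipate is bookkeeping the different Sobolev levels consistently: the velocity $u$ lives in $H^{s+2}$ while $\rho$ only in $H^s$, so in the drag term $\rho(u-v)$ and in the coupling $E_2$ one must check that the lower-regularity factor never needs to be differentiated more than $s$ times --- this works because $\rho$ and $n$ always appear undifferentiated-at-top-order or multiplied by smoother functions, and $H^s$ with $s>5/2$ is a Banach algebra embedding into $W^{1,\infty}$. The other delicate point is propagating the positive lower bound on $\rho^{m+1}$ and on $1+n^{m+1}$ uniformly in $m$ on $[0,T_0]$; this is handled by the explicit transport/characteristic representation together with the already-established $H^s$ (hence $L^\infty$) bound on $\nabla\cdot u^m$ and on $n^m,\partial_t n^m$, so the exponential factors are controlled by $C(M_0)$ and stay within a fixed band once $T_0$ is small.
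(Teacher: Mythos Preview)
Your proposal is correct and matches the paper's approach closely: induction on $m$, Gr\"onwall for $\rho^{m+1}$ via commutator estimates, transport estimate for $u^{m+1}$ after dividing by the positive $\rho^{m+1}$, and the $A^0$-weighted symmetric-hyperbolic energy for $w^{m+1}=(n^{m+1},v^{m+1})$ with the viscous term from $E_1$ furnishing the $L^2_tH^{s+2}_x$ dissipation for $v^{m+1}$. The only slip is in your $u^{m+1}$ bound: since $v^m$ lies only in $H^{s+1}$ pointwise in time, the $\nabla^{s+2}v^m$ contribution to the source cannot give $C(M_0)t$ but must be handled via $\int_0^t\|v^m(\tau)\|_{H^{s+2}}^2\,d\tau$ (the dissipation bound from the previous iterate), so this quantity should be carried along in the induction hypothesis with a uniform-in-$m$ bound---which is precisely what the paper does.
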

\begin{proof}
For the detailed proof, see Appendix \ref{Local}.
\end{proof}

Next one can show, by establishing the estimate for the difference $(W^{m+1} -W^m)$ in a standard way, that the approximation sequence $\{W^m\}_{m=0}^\infty$ is a Cauchy sequence in $\mc([0,T_0];L^2(\T^3)) \times \mc([0,T_0];H^1(\T^3)) \times \mc([0,T_0];L^2(\T^3)) \times (\mc([0,T_0];L^2(\T^3)) \cap L^2(0,T_0;H^1(\T^3)))$.
\begin{lemma}\label{lem:cauchy} Let $(\rho^m,u^m,n^m,v^m)$ be a sequence of the approximated solutions with the initial data $(\rho_0,u_0,n_0,v_0)$ satisfying $\|W_0\|_{\mn^s} \leq \epsilon_0$, where $\epsilon_0$ chosen in Proposition \ref{prop:invar}. Then $(\rho^m,u^m,n^m,v^m)$ is a Cauchy sequence in $\mc([0,T_0];L^2(\T^3)) \times \mc([0,T_0];H^1(\T^3)) \times \mc([0,T_0];L^2(\T^3)) \times (\mc([0,T_0];L^2(\T^3)) \cap L^2(0,T_0;H^1(\T^3)))$.
\end{lemma}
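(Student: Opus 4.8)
The plan is to control the differences of consecutive iterates in the low-order norm of the target space and to close a two-step recursion in $m$. Write $\delta\rho^{m+1}:=\rho^{m+1}-\rho^m$, and similarly $\delta u^{m+1},\delta n^{m+1},\delta v^{m+1}$; since all iterates carry the same initial datum $W_0$, each difference vanishes at $t=0$. Subtracting consecutive copies of \eqref{approx-sys} (reading the second equation of \eqref{approx-sys} as the transport equation $\pa_tu^{m+1}+u^m\cdot\nabla u^{m+1}=-(u^m-v^m)$, which is the content of its $\rho^{m+1}$-weighted form) gives
\begin{align*}
\pa_t\delta\rho^{m+1}+u^m\cdot\nabla\delta\rho^{m+1}&=-\delta u^m\cdot\nabla\rho^m-\rho^{m+1}\nabla\cdot\delta u^m-(\nabla\cdot u^{m-1})\,\delta\rho^{m+1},\\
\pa_t\delta u^{m+1}+u^m\cdot\nabla\delta u^{m+1}&=-\delta u^m\cdot\nabla u^m-\delta u^m+\delta v^m,
\end{align*}
together with, for $\delta w^{m+1}:=(\delta n^{m+1},\delta v^{m+1})$, the symmetric hyperbolic system
\[
A^0(w^m)\pa_t\delta w^{m+1}+\sum_{j=1}^3A^j(w^m)\pa_j\delta w^{m+1}=\big(0,-L\delta v^{m+1}\big)^{T}+\big(0,\rho^m(\delta u^m-\delta v^m)+\delta\rho^m(u^{m-1}-v^{m-1})\big)^{T}+\mathcal R^{m+1},
\]
where the coefficient defect $\mathcal R^{m+1}=-[A^0(w^m)-A^0(w^{m-1})]\pa_tw^m-\sum_j[A^j(w^m)-A^j(w^{m-1})]\pa_jw^m$ is $O(|\delta w^m|)$ times functions bounded in $L^\infty$ uniformly in $m$ (the bound for $\pa_tw^m$ being read off from the equation). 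Throughout one uses the uniform bounds of Lemma \ref{prop:invar} and the lower bound $1+n^m\ge c>0$ on $[0,T_0]$ (valid after shrinking $T_0$).

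Next I run the standard energy estimates component by component: in $L^2$ for $\delta\rho^{m+1}$, in $H^1$ for $\delta u^{m+1}$ (differentiate once, multiply by that derivative, integrate, always putting the spare derivative on the high-regularity factor $u^m$), and with the $A^0(w^m)$-symmetrizer for $\delta w^{m+1}$, in which the symmetry of the $A^j$ turns the first-order flux terms (in particular pressure and divergence) into zeroth-order ones and the Lam\'e operator produces the dissipation $-2\int_{\T^3}L\delta v^{m+1}\cdot\delta v^{m+1}\,dx\ge2c_0\|\nabla\delta v^{m+1}\|_{L^2}^2$ with $c_0>0$ thanks to $\mu>0,\ \lambda+2\mu>0$. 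Using the uniform bounds and $\int_{\T^3}A^0(w^m)|\delta w^{m+1}|^2\,dx\sim\|\delta w^{m+1}\|_{L^2}^2$, this yields
\begin{align*}
\tfrac{d}{dt}\|\delta\rho^{m+1}\|_{L^2}^2&\ls_{M_0}\|\delta\rho^{m+1}\|_{L^2}^2+\|\delta u^m\|_{H^1}^2,\\
\tfrac{d}{dt}\|\delta u^{m+1}\|_{H^1}^2&\ls_{M_0}\|\delta u^{m+1}\|_{H^1}^2+\|\delta u^m\|_{H^1}^2+\|\delta v^m\|_{L^2}^2+\|\nabla\delta v^m\|_{L^2}^2,\\
\tfrac{d}{dt}\|\delta w^{m+1}\|_{L^2}^2+2c_0\|\nabla\delta v^{m+1}\|_{L^2}^2&\ls_{M_0}\|\delta w^{m+1}\|_{L^2}^2+\|\delta w^m\|_{L^2}^2+\|\delta u^m\|_{L^2}^2+\|\delta\rho^m\|_{L^2}^2.
\end{align*}
The structural point is that every level-$m$ quantity on the right is measured in a norm no stronger than the corresponding component of the target space, and the only one whose first-order norm is controlled merely in $L^2_t$ (not $L^\infty_t$) is $\delta v^m$, entering through $\|\nabla\delta v^m\|_{L^2}^2$ in the $\delta u^{m+1}$ estimate.

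Now set $\Psi^m:=\sup_{[0,T_0]}\big(\|\delta\rho^m\|_{L^2}^2+\|\delta u^m\|_{H^1}^2+\|\delta n^m\|_{L^2}^2+\|\delta v^m\|_{L^2}^2\big)$ and $E^m:=\int_0^{T_0}\|\nabla\delta v^m\|_{L^2}^2\,dt$. Integrating the three inequalities in time (all differences vanish at $t=0$) and applying Gronwall, every level-$m$ term picks up a factor $T_0$ except $\int_0^{T_0}\|\nabla\delta v^m\|_{L^2}^2\,dt=E^m$, so $\Psi^{m+1}\ls_{M_0}T_0\Psi^m+E^m$. On the other hand, integrating the hyperbolic estimate one level down and absorbing the dissipation term on the left gives $E^m\ls_{M_0}T_0(\Psi^m+\Psi^{m-1})$ — the essential point being that all level-$(m-1)$ quantities entering the hyperbolic block occur in $L^2_x$ only, so $E^{m-1}$ does \emph{not} reappear. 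Combining, $\Psi^{m+1}\le KT_0(\Psi^m+\Psi^{m-1})$ and $E^{m+1}\le KT_0(\Psi^{m+1}+\Psi^m)$ for some $K=K(M_0)$; choosing $T_0$ so small that $KT_0\le\tfrac14$ gives a two-term contraction, which forces $\Psi^m$, and hence $E^m$, to decay geometrically. Therefore $\sum_m\big(\Psi^{m+1}+E^{m+1}\big)^{1/2}<\infty$, i.e.\ $\{(\rho^m,u^m,n^m,v^m)\}$ is Cauchy in $\mc([0,T_0];L^2(\T^3))\times\mc([0,T_0];H^1(\T^3))\times\mc([0,T_0];L^2(\T^3))\times\big(\mc([0,T_0];L^2(\T^3))\cap L^2(0,T_0;H^1(\T^3))\big)$.

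The step I expect to be the main obstacle is exactly this matching of norms: because $\delta u^{m+1}$ is measured in $\mc([0,T_0];H^1)$, its energy estimate unavoidably sees one spatial derivative of its drag partner $\delta v^m$, which is controlled only in $L^2_tH^1$ and carries no $T_0$-smallness, so a naive single-level contraction fails; one must feed the Navier--Stokes dissipation estimate at the previous level back into the scheme to recover smallness, at the cost of a two-level recursion. The remaining ingredients — symmetrizing the $(n,v)$ block, the coercivity of the Lam\'e form, and the bookkeeping of which low norm each error term requires — are routine and parallel the proof of Lemma \ref{prop:invar}.
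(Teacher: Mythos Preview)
Your argument is correct and follows the same overall strategy as the paper: subtract consecutive iterates, run the $L^2$ estimate on $\delta\rho^{m+1}$, the $H^1$ estimate on $\delta u^{m+1}$, and the $A^0(w^m)$-weighted estimate on $\delta w^{m+1}=(\delta n^{m+1},\delta v^{m+1})$, the latter producing the Lam\'e dissipation $\int_0^t\|\nabla\delta v^{m+1}\|_{L^2}^2$. Your identification of the structural obstacle --- that $\nabla\delta v^m$ enters the $H^1$ estimate for $\delta u^{m+1}$ but is controlled only in $L^2_t$ --- is exactly the point.

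The one genuine difference is in how the recursion is closed. You package the estimates as $\Psi^{m+1}\lesssim T_0\Psi^m+E^m$ and $E^m\lesssim T_0(\Psi^m+\Psi^{m-1})$, obtain a two-step contraction $\Psi^{m+1}\le KT_0(\Psi^m+\Psi^{m-1})$, and then shrink $T_0$. The paper instead keeps everything in integrated form, sets
\[
Y^{m+1}(t):=\|\delta\rho^{m+1}(t)\|_{L^2}^2+\|\delta u^{m+1}(t)\|_{H^1}^2+\|\delta w^{m+1}(t)\|_{L^2}^2+\int_0^t\|\nabla\delta v^{m+1}\|_{L^2}^2\,ds,
\]
and obtains (after Gronwall) a one-step recursion $Y^{m+1}(t)\lesssim\int_0^t Y^m(s)\,ds$; the double time integral $\int_0^t\int_0^s\|\nabla\delta v^m\|_{L^2}^2\,d\tau\,ds$ arising from the $\delta u^{m+1}$ estimate is precisely $\int_0^t\big(\int_0^s\|\nabla\delta v^m\|_{L^2}^2\,d\tau\big)ds$, which sits inside $\int_0^t Y^m(s)\,ds$. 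Iterating then gives $Y^{m+1}(t)\lesssim (CT_0)^m/m!$, which goes to zero for the $T_0$ already fixed in Lemma~\ref{prop:invar}, with no further shrinking. Your contraction approach is perfectly valid for the local existence theorem (one can always reduce $T_0$), but the factorial iteration is slightly cleaner in that it proves the lemma for the stated $T_0$.
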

\begin{proof}
For the detailed proof, see Appendix \ref{Local}.
\end{proof}
Interpolating this with the uniform bound of $\{W^m\}_{m=0}^\infty$ in $\mc([0,T_0];H^s(\T^3)) \times \mc([0,T_0];H^{s+2}(\T^3)) \times \mc([0,T_0];H^{s+1}(\T^3)) \times \mc([0,T_0];H^{s+1}(\T^3))$ yields that
\[
(\rho^m,u^m) \to (\rho,u) \quad \mbox{in } \mc([0,T_0];H^{s-1}(\T^3)) \times \mc([0,T_0];H^{s+1}(\T^3)) \quad \mbox{as } m \to \infty,
\]
and
\[
(n^m,v^m) \to (n,v) \quad \mbox{in }\mc([0,T_0];H^{s}(\T^3)) \times \mc([0,T_0];H^{s}(\T^3)) \quad \mbox{as } m \to \infty.
\]
Thus it only remains to show that $(\rho,u,n,v) \in \mi_s(T_0)$.
To this end, one can first prove the time-right continuity using a standard functional analytic argument together with the revisited energy estimates. For the hyperbolic variables $(\rho, n, v)$, by simply considering the time reversal problem, one can show that $(\rho, n,v)$ is the time-left continuous, in turn, it has the desired regularity. 
However, since the momentum equations are not time-reversible, one has to treat the compressible fluid velocity $v$ in a different way. For this, we obtain a better energy estimate thanks to the smoothing effect of diffusion for $v$ to show the desired regularity \cite{MB}. Here we  remark that the energy method with the time-translated mollifier technique developed in \cite{KST} specialized for the initial-boundary value hyperbolic problem can also be used to show the desired regularity without the time-reversal argument. 

\textbf{Uniqueness:} Let $(\rho_1,u_1,n_1,v_1)$ and $(\rho_2,u_2,n_2,v_2)$ be the classical solutions obtained in the part of existence with the same initial data $(\rho_0,u_0,n_0,v_0)$. We set $\Delta(t)$ a difference between two classical solutions:
\[
\Delta(t) := \|\rho_1(t) - \rho_2(t)\|_{L^2}^2 + \|u_1(t) - u_2(t)\|_{H^1}^2 + \|n_1(t) - n_2(t)\|_{L^2}^2 + \|v_1(t) - v_2(t)\|_{L^2}^2.
\]
Then it directly follows from Lemma \ref{lem:cauchy} that
\[
\Delta(t) \lesssim\int_0^t \Delta(s)\,ds,
\]
with $\Delta(0) = 0$. This yields that $\Delta(t) \equiv  0$ for all $t \in [0,T_0]$ and 
\begin{align*}
\begin{aligned}
&\rho_1 \equiv \rho_2 \quad \mbox{in } \mc([0,T_0];L^2(\T^3)), \quad u_1 \equiv u_2 \quad \mbox{in } \mc([0,T_0];H^1(\T^3)),\cr
&n_1 \equiv n_2 \quad \mbox{in } \mc([0,T_0];L^2(\T^3)), \mbox{ and } v_1 \equiv v_2 \quad \mbox{in } \mc([0,T_0];L^2(\T^3))\cap L^2(0,T_0;H^1(\T^3)).
\end{aligned}
\end{align*}
This concludes the uniqueness of classical solutions
%
%
%
%
\section{A priori estimates for the large time behavior}\label{sec4}
In this section, we study the large time behavior of the classical solutions to system \eqref{two-hydro-eqns-1}-\eqref{ini-two-hydro-eqns}. The estimates for the large time behavior will be crucially used to get the uniform bound for $\rho$ in $H^s$-norm (see Lemma \ref{lem:rn}), by which one can conclude that the finite-time blow-up of the density for the pressureless Euler equations cannot occur. Before we proceed, we define 
\begin{equation} E(t) := \int_{\T^3} \rho|u|^2 + (n+1)|v|^2 dx + \frac{1}{\gamma-1}\int_{\T^3} (n+1)^\gamma dx.
\end{equation}

\begin{proposition}\label{prop:large-t}Let $(\rho,u,n,v)$ be the classical solutions to  \eqref{two-hydro-eqns-1}-\eqref{ini-two-hydro-eqns} in the interval $[0,T]$ satisfying 
\begin{enumerate}
\item $\rho \in [0,\bar\rho]$, $n +1 \in [0, \bar n]$, $\rho_c(0) \in (0,\infty)$,
\item $v \in L^\infty(0,T;L^\infty(\T^3))$,
\item $E(0)$ is small enough.
\end{enumerate}
Then we have
\[
\mathcal{L}(t) \leq C_1\mathcal{L}_0e^{-\lambda t}, \quad t \in [0,T],
\]
for some constants $C_1$ and $\lambda > 0$, where 
\[
\mathcal{L}(t)= \int_{\T^3} \rho |u-m_c|^2 dx + \int_{\T^3} (n+1)| v - j_c|^2 dx + |m_c - j_c|^2 + \int_{\T^3} n^2 dx.
\]
\end{proposition}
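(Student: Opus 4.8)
The plan is to construct the Lyapunov functional's differential inequality in three stages: first differentiate the basic energy $E(t)$, then differentiate the fluctuation quantities appearing in $\mathcal L(t)$, and finally combine everything with a Bogovskii-type estimate to extract enough dissipation to close the loop with exponential rate. First I would record the conservation laws: integrating $\eqref{two-hydro-eqns-1}_1$ and $\eqref{two-hydro-eqns-1}_3$ shows $\rho_c(t)=\rho_c(0)$ and $\int_{\T^3} n\,dx = 0$ for all $t$, and adding the two momentum equations and integrating shows $\rho_c(0)m_c(t)+j_c(t)$ is conserved (this is the ``conservation of total momentum'' invoked in the Remark). Then a standard energy estimate — multiply the $\rho u$-equation by $u$, the $(n+1)v$-equation by $v$, use the continuity equations and the pressure/internal-energy identity — yields the dissipation identity
\[
\frac{d}{dt}E(t) + \mu\int_{\T^3}|\nabla v|^2\,dx + (\mu+\lambda)\int_{\T^3}|\nabla\cdot v|^2\,dx + \int_{\T^3}\rho|u-v|^2\,dx = 0,
\]
so that $E(t)\le E(0)$ and both the viscous dissipation and the relative-velocity drag are integrable in time. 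In particular $E(0)$ small keeps all the $L^2$-type quantities small on $[0,T]$, which is what hypothesis (3) is for.

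Next I would compute the evolution of each piece of $\mathcal L$. For $\int_{\T^3}\rho|u-m_c|^2\,dx$, use $\partial_t(\rho u)+\nabla\cdot(\rho u\otimes u)=-\rho(u-v)$ together with $\dot m_c$ computed from the conserved-momentum relation; the continuity equation makes the transport terms integrate to zero, and one is left with a drag contribution $-2\int\rho(u-m_c)\cdot(u-v)\,dx$ plus cross terms involving $\dot m_c$. Similarly for $\int_{\T^3}(n+1)|v-j_c|^2\,dx$ one picks up the viscous term $-2\int Lv\cdot(v-j_c)\,dx$, which after integration by parts and using $\int(v-j_c) = \int v - j_c\cdot|\T^3|$ (and $j_c=\int(n+1)v$) gives a good negative term $\sim \mu\int|\nabla v|^2$ modulo lower order, plus a drag contribution $+2\int\rho(u-v)\cdot(v-j_c)\,dx$. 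The term $|m_c-j_c|^2$ differentiates using $\dot m_c,\dot j_c$ from the momentum balance; its derivative produces, crucially, a term proportional to $-|m_c-j_c|^2$ coming from the drag (since $\dot j_c = \int\rho(u-v)\,dx$ and $u-v = (u-m_c)-(v-j_c)+(m_c-j_c)$, the ``diagonal'' part gives $-\rho_c(0)\,|m_c-j_c|^2$-type damping up to the fluctuations). Finally $\frac{d}{dt}\int n^2\,dx = -2\int n\,\nabla\cdot((n+1)v)\,dx = \int n^2\,\nabla\cdot v\,dx - 2\int n\,\nabla\cdot v\,dx$, which is controlled by $\|\nabla v\|_{L^\infty}$-type or, better, by $\|n\|_{L^\infty}\|\nabla v\|_{L^2}$ and $\|\nabla v\|_{L^2}$ after using smallness and $v\in L^\infty_tL^\infty_x$.

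The main obstacle — and the point the introduction flags explicitly — is that the dissipation identity only controls $\nabla v$, not $n$ itself (the density perturbation of the compressible fluid has no direct damping), so a naive combination of the above gives a differential inequality $\frac{d}{dt}\mathcal L \le -c\,\mathcal D + (\text{small})\mathcal L$ with $\mathcal D$ missing the $\int n^2$ coercivity. To recover it one uses the Bogovskii operator on $\T^3$: since $\int_{\T^3} n\,dx=0$, there is $\Phi$ with $\nabla\cdot\Phi = n$ and $\|\Phi\|_{H^1}\lesssim\|n\|_{L^2}$; testing the $(n+1)v$-momentum equation against $\Phi$ and integrating by parts converts the pressure term $\nabla p(n+1)=\nabla((n+1)^\gamma)$ into $\int (n+1)^\gamma\, n\,dx \gtrsim \|n\|_{L^2}^2$ minus higher-order terms, while all other terms (time derivative of $\int(n+1)v\cdot\Phi$, convection, viscosity, drag) are bounded by $\|\nabla v\|_{L^2}$, $\|v\|_{L^2}$, $\|\rho(u-v)\|_{L^2}$ times $\|n\|_{L^2}$ plus quadratically small remainders. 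Adding $\kappa$ times this auxiliary inequality to the combined Lyapunov inequality, for a small fixed $\kappa>0$, produces a genuine dissipation term $\gtrsim \mu\|\nabla v\|_{L^2}^2 + \|\rho^{1/2}(u-v)\|_{L^2}^2 + |m_c-j_c|^2 + \|n\|_{L^2}^2$ on the right-hand side; combining with a Poincaré-type inequality $\|v-j_c\|_{L^2}\lesssim\|\nabla v\|_{L^2}$ and $\|u-m_c\|_{L^2}^2\lesssim \ldots$ (bounding the Euler fluctuation by the drag term and $|m_c-j_c|^2$, using $\rho\le\bar\rho$) shows $\mathcal D\gtrsim \mathcal L$ after absorbing the $(\text{small})\mathcal L$ terms using hypotheses (1)–(3). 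One then concludes $\frac{d}{dt}\mathcal L\le -\lambda\mathcal L$ and hence $\mathcal L(t)\le C_1\mathcal L_0 e^{-\lambda t}$ by Grönwall, where $C_1$ absorbs the fixed constants from the auxiliary-functional modification. The delicate bookkeeping is making sure every ``bad'' cross term is either a perfect cancellation (the convection terms, via the continuity equations) or carries a factor that is small by $E(0)\ll 1$ and the a priori bounds $\rho\le\bar\rho$, $n+1\le\bar n$, $\|v\|_{L^\infty}\ll1$.
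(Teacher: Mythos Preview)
Your proposal is correct and follows essentially the same route as the paper: conservation laws and the basic energy identity, then differentiation of the fluctuation pieces (the paper's Lemma~\ref{lem:asym}), identification of the missing $\int n^2$ dissipation, and finally the Bogovskii correction (the paper adds the cross term $-2\sigma\int(n+1)(v-j_c)\,\mathcal B[n]\,dx$ to the energy, which is exactly your ``test the momentum equation against $\Phi$ and add $\kappa$ times the resulting relation'') to recover coercivity in $n$ and close via Gr\"onwall. The only cosmetic difference is that the paper centers the Bogovskii correction around $v-j_c$ rather than $v$, which slightly streamlines the estimates of the cross terms involving $j_c$ and $j_c'$.
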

\begin{remark}
Note that no assumption on the lower bounds of $\rho$ and $n+1$ has been made for Proposition \ref{prop:large-t}. As mentioned before, we make use of the Lyapunov functional proposed in \cite{CK} for the compressible fluid. 
This gives a sharper result than the one obtained in \cite{BCHK3}. 
\end{remark}

\begin{lemma}\label{lem:energy} Let $(\rho,u,n,v)$ be the global classical solutions to the system \eqref{two-hydro-eqns-1}-\eqref{ini-two-hydro-eqns}. Then there hold
\begin{align*}
\begin{aligned}
&(i) \,\text{Conservations of the masses and total momentum:}\cr
& \qquad \qquad \frac{d}{dt}\int_{\T^3} \rho \,dx = \frac{d}{dt}\int_{\T^3} n \,dx = 0 \quad \mbox{and} \quad \frac{d}{dt}\int_{\T^3} (\rho u + (n+1)v) \,dx = 0.\cr
&(ii)\,\text{Dissipation of the total energy:}\cr
&\qquad  \qquad \frac12\frac{d}{dt}E(t)+ \mu\int_{\T^3} |\nabla v|^2 dx + (\mu + \lambda)\int_{\T^3} |\nabla \cdot v|^2 dx +\int_{\T^3} \rho |u-v|^2 dx = 0.
\end{aligned}
\end{align*}
\end{lemma}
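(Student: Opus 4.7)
\medskip
\noindent
\textbf{Proof proposal.} The plan is to derive each identity by the standard trick of multiplying \eqref{two-hydro-eqns-1} by appropriate quantities, integrating by parts over the torus (so all boundary terms vanish), and exploiting the cancellation between the two drag forces. Since the solutions under consideration are classical, every manipulation below is rigorous.

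\emph{Step 1: mass and momentum conservation.} First I would integrate $\eqref{two-hydro-eqns-1}_1$ and $\eqref{two-hydro-eqns-1}_3$ over $\T^3$; the divergences integrate to zero, yielding the two mass conservations. Then I would add $\eqref{two-hydro-eqns-1}_2$ and $\eqref{two-hydro-eqns-1}_4$ and integrate over $\T^3$. The flux terms $\nabla\cdot(\rho u\otimes u)$ and $\nabla\cdot((n+1)v\otimes v)$ integrate to zero; the pressure gradient $\nabla p(n+1)$ and the Lam\'e term $Lv = -\mu\Delta v - (\mu+\lambda)\nabla(\nabla\cdot v)$ are pure derivatives, hence integrate to zero by periodicity; and the drag terms $-\rho(u-v)$ and $+\rho(u-v)$ cancel exactly. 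This gives $\frac{d}{dt}\int_{\T^3}(\rho u + (n+1)v)\,dx = 0$.

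\emph{Step 2: energy dissipation.} For the kinetic energy of the pressureless component, I would rewrite $\eqref{two-hydro-eqns-1}_2$ in non-conservative form using $\eqref{two-hydro-eqns-1}_1$, dot with $u$, and use the standard identity
\[
\tfrac{1}{2}\partial_t(\rho|u|^2) + \tfrac{1}{2}\nabla\cdot(\rho|u|^2 u) = u\cdot\bigl(\partial_t(\rho u) + \nabla\cdot(\rho u\otimes u)\bigr),
\]
to obtain $\frac{1}{2}\frac{d}{dt}\int_{\T^3}\rho|u|^2\,dx = -\int_{\T^3}\rho\, u\cdot(u-v)\,dx$. An identical manipulation on $\eqref{two-hydro-eqns-1}_3$--$\eqref{two-hydro-eqns-1}_4$ gives
\[
\tfrac{1}{2}\tfrac{d}{dt}\!\int_{\T^3}(n+1)|v|^2\,dx + \int_{\T^3} Lv\cdot v\,dx + \int_{\T^3}\nabla p(n+1)\cdot v\,dx = \int_{\T^3}\rho\, v\cdot(u-v)\,dx.
\]
Integration by parts gives $\int Lv\cdot v\,dx = \mu\int|\nabla v|^2 dx + (\mu+\lambda)\int|\nabla\cdot v|^2 dx$.

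\emph{Step 3: pressure work to potential energy.} The one computation requiring a little care is converting the pressure work into a time derivative. Setting $\psi(\rho):=\tfrac{\rho^\gamma}{\gamma-1}$, so that $\psi''(\rho)\rho = \gamma\rho^{\gamma-1} = p'(\rho)$, I would use $\eqref{two-hydro-eqns-1}_3$ to write
\[
\tfrac{d}{dt}\!\int_{\T^3}\psi(n+1)\,dx = -\!\int_{\T^3}\psi'(n+1)\,\nabla\cdot\bigl((n+1)v\bigr)dx = \!\int_{\T^3}(n+1)v\cdot\nabla\psi'(n+1)\,dx = \!\int_{\T^3}\nabla p(n+1)\cdot v\,dx.
\]
Summing the two kinetic identities with this one, and using $-u\cdot(u-v) + v\cdot(u-v) = -|u-v|^2$ on the right-hand side, produces the stated energy balance.

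The whole argument is essentially bookkeeping of integrations by parts; the only mildly nontrivial step is identifying the correct potential $\psi$ so that the pressure work on the right matches a time derivative of $\int(n+1)^\gamma/(\gamma-1)\,dx$. No obstacles beyond that should arise, since the classical regularity of $(\rho,u,n,v)$ justifies every chain rule and integration by parts used.
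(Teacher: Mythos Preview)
Your proof is correct and follows essentially the same route as the paper: integrate the continuity and momentum equations for part (i), then for part (ii) derive the two kinetic-energy identities, integrate $Lv\cdot v$ by parts, and convert the pressure work $\int v\cdot\nabla p\,dx$ into $\tfrac{1}{\gamma-1}\tfrac{d}{dt}\int(n+1)^\gamma dx$ before summing. The paper states this last relation directly, while you supply its derivation via the potential $\psi$; otherwise the arguments are identical.
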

\begin{proof}
A straightforward computation yields the conservation of masses and the total momentum. For the estimate of dissipation of the total energy, we use the following relation
\[
\int_{\T^3} v \cdot \nabla p\,dx = \frac{1}{\gamma-1}\frac{d}{dt}\int_{\T^3} p\,dx,
\]
to deduce 
\[
\frac12\frac{d}{dt}\int_{\T^3} \rho|u|^2 dx = - \int_{\T^3} \rho(u-v) \cdot u\,dx,
\]
and
\[
\frac12\frac{d}{dt}\int_{\T^3} (n+1)|v|^2 + \frac{2}{\gamma-1} (n+1)^\gamma dx = -\mu\int_{\T^3} |\nabla v|^2 dx - (\mu + \lambda)\int_{\T^3} |\nabla \cdot v|^2 dx  + \int_{\T^3} \rho (u-v)\cdot v\,dx.
\]
Hence we obtain the desired result by combining the above two equalities.
\end{proof}
\begin{remark}\label{rmk:energy}
It follows from Lemma \ref{lem:energy} that
\[
E(t) \leq E(0) =:E_0 \quad \mbox{for all} \quad t \geq 0.
\]
\end{remark}

%
%
%
%
The following is the Moser inequalities that will be used later. 
\begin{lemma}\label{3.2}
(i) Let $k \in \N, \, p \in [1,\infty], \, h \in \mc^k(\T^3)$. Then there exists a positive constant $c = c(k,p,h)$ such that
\[
\|\nabla^k h(w)\|_{L^p} \leq c\|w\|_{L^\infty}^{k-1}\|\nabla^k w\|_{L^p},
\]
for all $w \in (W^{k,p} \cap L^\infty)(\T^3)$. \newline

(ii) For any pair of functions $f,g \in H^m(\T^3) \cap L^\infty(\T^3)$, we obtain
\[
\|\nabla^k(fg)\|_{L^2} \ls \|f\|_{L^\infty}\|\nabla^k g\|_{L^2} + \|\nabla^k f\|_{L^2}\|g\|_{L^\infty}.
\]
Furthermore if $\nabla f \in L^\infty(\T^3)$, we have
\[
\|\nabla^k(fg) - f\nabla^k g\|_{L^2} \ls \|\nabla f\|_{L^\infty}\|\nabla^{k-1}g\|_{L^2} + \|\nabla^k f\|_{L^2}\|g\|_{L^\infty}.
\]
\end{lemma}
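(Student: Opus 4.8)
The plan is to derive all three estimates from the same three classical ingredients: the Leibniz formula (respectively the Fa\`a di Bruno formula) for higher-order derivatives of a product (respectively a composition), the Gagliardo--Nirenberg interpolation inequalities on $\T^3$, and H\"older's and Young's inequalities. I would dispose of part (ii) first, since part (i) is a variant of the same scheme. For the first inequality of (ii), expand $\nabla^k(fg)=\sum_{l=0}^k \binom{k}{l}\,\nabla^l f\,\nabla^{k-l}g$. The endpoint terms $l=0$ and $l=k$ already have the desired form, bounded by $\|f\|_{L^\infty}\|\nabla^k g\|_{L^2}$ and $\|\nabla^k f\|_{L^2}\|g\|_{L^\infty}$. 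For each intermediate term $1\le l\le k-1$ I would apply H\"older with the conjugate exponents $\tfrac{2k}{l}$ and $\tfrac{2k}{k-l}$, and then the scale-invariant Gagliardo--Nirenberg inequalities $\|\nabla^l f\|_{L^{2k/l}}\lesssim \|f\|_{L^\infty}^{1-l/k}\|\nabla^k f\|_{L^2}^{l/k}$ and $\|\nabla^{k-l}g\|_{L^{2k/(k-l)}}\lesssim \|g\|_{L^\infty}^{l/k}\|\nabla^k g\|_{L^2}^{1-l/k}$; multiplying these bounds and invoking Young's inequality $a^{\theta}b^{1-\theta}\le \theta a+(1-\theta)b$ with $\theta=1-\tfrac lk$, $a=\|f\|_{L^\infty}\|\nabla^k g\|_{L^2}$, $b=\|g\|_{L^\infty}\|\nabla^k f\|_{L^2}$, absorbs each intermediate term into the two endpoint contributions.

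For the commutator estimate I would write $\nabla^k(fg)-f\,\nabla^k g=\sum_{l=1}^k \binom{k}{l}\,\nabla^l f\,\nabla^{k-l}g$. The term $l=k$ equals $(\nabla^k f)\,g$ and is bounded by $\|\nabla^k f\|_{L^2}\|g\|_{L^\infty}$. For $1\le l\le k-1$ the point is that $\nabla^l f=\nabla^{l-1}(\nabla f)$ should now be interpolated between $\nabla f\in L^\infty$ and $\nabla^k f=\nabla^{k-1}(\nabla f)\in L^2$, while $\nabla^{k-l}g$ is interpolated between $g\in L^\infty$ and $\nabla^{k-1}g\in L^2$: H\"older with exponents $\tfrac{2(k-1)}{l-1}$ and $\tfrac{2(k-1)}{k-l}$ followed by Gagliardo--Nirenberg gives $\|\nabla^l f\,\nabla^{k-l}g\|_{L^2}\lesssim \|\nabla f\|_{L^\infty}^{\,1-\alpha}\|\nabla^k f\|_{L^2}^{\,\alpha}\,\|g\|_{L^\infty}^{\,1-\beta}\|\nabla^{k-1}g\|_{L^2}^{\,\beta}$ with $\alpha=\tfrac{l-1}{k-1}$, $\beta=\tfrac{k-l}{k-1}$ and $\alpha+\beta=1$ (and the case $l=1$ uses $\|\nabla f\|_{L^\infty}$ directly); regrouping as $(\|\nabla f\|_{L^\infty}\|\nabla^{k-1}g\|_{L^2})^{\beta}(\|g\|_{L^\infty}\|\nabla^k f\|_{L^2})^{\alpha}$ and applying Young's inequality once more produces exactly the two asserted terms.

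Part (i) follows the same pattern through the Fa\`a di Bruno formula: $\nabla^k h(w)$ is a finite linear combination of terms $h^{(j)}(w)\prod_{i=1}^j \nabla^{m_i}w$ with $1\le j\le k$, each $m_i\ge 1$ and $m_1+\dots+m_j=k$. Since $w\in L^\infty$, its range lies in a fixed bounded interval on which $h\in\mc^k$, so $\|h^{(j)}(w)\|_{L^\infty}$ is controlled by a constant depending only on $k$, $h$ (and the size of $\|w\|_{L^\infty}$); for each product I would use H\"older with exponents $p_i=kp/m_i$, which satisfy $\sum_i p_i^{-1}=p^{-1}$, together with Gagliardo--Nirenberg, $\|\nabla^{m_i}w\|_{L^{kp/m_i}}\lesssim \|w\|_{L^\infty}^{1-m_i/k}\|\nabla^k w\|_{L^p}^{m_i/k}$, so that, using $\sum_i m_i/k=1$, the product is bounded by $\|w\|_{L^\infty}^{\,j-1}\|\nabla^k w\|_{L^p}$; summing over the finitely many terms and collecting the powers of $\|w\|_{L^\infty}$ gives the estimate. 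I do not expect any conceptual obstacle: the entire argument is bookkeeping, and the only point demanding care is choosing the Gagliardo--Nirenberg exponents at the scale-invariant value $\theta=j/m$ so that simultaneously the paired H\"older exponents are conjugate and the powers of $\|w\|_{L^\infty}$ (respectively $\|\nabla f\|_{L^\infty}$ and $\|g\|_{L^\infty}$) combine so that Young's inequality closes the estimate into the two desired terms.
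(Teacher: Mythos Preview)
Your argument is correct and is exactly the standard route to the Moser-type inequalities: Leibniz (respectively Fa\`a di Bruno) expansion, H\"older with the scale-matched exponents, Gagliardo--Nirenberg interpolation between $L^\infty$ and the top-order norm, and Young's inequality to recombine the mixed powers into the two endpoint terms. The paper does not supply its own proof but simply refers to section~4 of Racke's monograph \cite{Ra}, where precisely this argument is carried out, so your proposal is fully aligned with the cited source.
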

The proof of Lemma \ref{3.2} can be found in section 4 in \cite{Ra}. 
\begin{lemma}\label{lem:asym} Let $(\rho,u,n,v)$ be the classical solutions to  \eqref{two-hydro-eqns-1}-\eqref{ini-two-hydro-eqns}.
Then there hold
\begin{align*}
\begin{aligned}
& (i) \,\,\,\,\,\,
\frac12\frac{d}{dt} \int_{\T^3} \rho|u - m_c|^2 dx = -\int_{\T^3} \rho (u-m_c)\cdot (u-v)dx.\cr
& (ii) \,\,\,\, \frac12\frac{d}{dt}\lt(\int_{\T^3} (n+1) |v - j_c|^2 dx + \frac{2}{\gamma - 1}\int_{\T^3} (n+1)^\gamma dx \rt)\cr
&\qquad \qquad + \mu \int_{\T^3} |\nabla v|^2 dx + (\mu + \lambda)\int_{\T^3} |\nabla \cdot v|^2 dx = \int_{\T^3} \rho (v - j_c) \cdot (u-v) dx.\cr
& (iii) \,\, \frac12\frac{d}{dt}|m_c - j_c|^2 = -\frac{1 + \rho_c(0)}{\rho_c(0)} \int_{\T^3} \rho(m_c - j_c)\cdot (u-v) dx.
\end{aligned}
\end{align*}
\end{lemma}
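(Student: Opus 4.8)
The three identities are evolution laws for the pieces of the Lyapunov functional $\mathcal{L}$, so the plan is to differentiate each quantity in time and substitute the equations of motion \eqref{two-hydro-eqns-1}, using the conservation laws from Lemma \ref{lem:energy} to handle the time derivatives of the averaged quantities $m_c$, $j_c$. First I would record what $\dot m_c$ and $\dot j_c$ are. Since $\rho_c(t)=\rho_c(0)$ is constant and $\frac{d}{dt}\int_{\T^3}\rho u\,dx = -\int_{\T^3}\rho(u-v)\,dx$ (integrate $\eqref{two-hydro-eqns-1}_2$), we get $\dot m_c = -\frac{1}{\rho_c(0)}\int_{\T^3}\rho(u-v)\,dx$; similarly, integrating $\eqref{two-hydro-eqns-1}_4$ and using $\int_{\T^3}\nabla p\,dx = \int_{\T^3} Lv\,dx = 0$ on the torus, $\dot j_c = \frac{d}{dt}\int_{\T^3}(n+1)v\,dx = \int_{\T^3}\rho(u-v)\,dx$. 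In particular $\rho_c(0)\dot m_c + \dot j_c = 0$, which is the conservation of total momentum, and $\dot m_c - \dot j_c = -\frac{1+\rho_c(0)}{\rho_c(0)}\int_{\T^3}\rho(u-v)\,dx$; the last expression is exactly the right-hand side of (iii) once we note $\frac{d}{dt}|m_c-j_c|^2 = 2(m_c-j_c)\cdot(\dot m_c-\dot j_c)$ and pull the constant vector $(m_c-j_c)$ inside the integral. So (iii) is immediate from the momentum balance.

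For (i), write $\frac{d}{dt}\int_{\T^3}\rho|u-m_c|^2\,dx$ and expand: the term with $\partial_t(\rho|u-m_c|^2)$ I would compute using the transport form $\partial_t\rho + \nabla\cdot(\rho u)=0$ and the velocity equation, which in nonconservative form reads $\rho(\partial_t u + u\cdot\nabla u) = -\rho(u-v)$ (obtained from $\eqref{two-hydro-eqns-1}_1$--$\eqref{two-hydro-eqns-1}_2$). The standard manipulation $\frac12\frac{d}{dt}\int\rho|\phi|^2 = \int\rho\phi\cdot(\partial_t\phi + u\cdot\nabla\phi)\,dx$ for any vector field $\phi$ (valid because of the continuity equation) applied with $\phi = u - m_c$ gives $\frac12\frac{d}{dt}\int_{\T^3}\rho|u-m_c|^2\,dx = \int_{\T^3}\rho(u-m_c)\cdot\big(-(u-v) - \dot m_c\big)\,dx$. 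The $\dot m_c$ contribution is $-\dot m_c\cdot\int_{\T^3}\rho(u-m_c)\,dx = -\dot m_c\cdot\big(\int\rho u\,dx - m_c\rho_c(0)\big) = 0$ by the very definition of $m_c$. This leaves exactly $-\int_{\T^3}\rho(u-m_c)\cdot(u-v)\,dx$, which is (i).

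For (ii), I would proceed the same way on the fluid side: from $\partial_t n + \nabla\cdot((n+1)v)=0$ and $\eqref{two-hydro-eqns-1}_4$ one gets the nonconservative form $(n+1)(\partial_t v + v\cdot\nabla v) + \nabla p(n+1) + Lv = \rho(u-v)$, and the analogue of the weighted-$L^2$ identity gives $\frac12\frac{d}{dt}\int_{\T^3}(n+1)|v-j_c|^2\,dx = \int_{\T^3}(n+1)(v-j_c)\cdot\big(\partial_t v + v\cdot\nabla v - \dot j_c\big)\,dx = \int_{\T^3}(v-j_c)\cdot\big(-\nabla p(n+1) - Lv + \rho(u-v)\big)\,dx - \dot j_c\cdot\int_{\T^3}(n+1)(v-j_c)\,dx$. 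Again the last term vanishes since $\int_{\T^3}(n+1)v\,dx = j_c$ and $\int_{\T^3}(n+1)\,dx = \int_{\T^3} n\,dx + 1 = 1$ (zero-mass initial datum), so $\int_{\T^3}(n+1)j_c\,dx = j_c$. For the $Lv$ term, $\int_{\T^3}(v-j_c)\cdot(-Lv)\,dx = -\int_{\T^3}v\cdot Lv\,dx = -\mu\int_{\T^3}|\nabla v|^2\,dx - (\mu+\lambda)\int_{\T^3}|\nabla\cdot v|^2\,dx$ after integrating by parts (the constant $j_c$ drops by periodicity), and for the pressure term, using the identity $\int_{\T^3}v\cdot\nabla p\,dx = \frac{1}{\gamma-1}\frac{d}{dt}\int_{\T^3}(n+1)^\gamma\,dx$ already quoted in the proof of Lemma \ref{lem:energy} (and $\int_{\T^3}j_c\cdot\nabla p\,dx=0$), we move it to the left side to form $\frac{1}{\gamma-1}\frac{d}{dt}\int_{\T^3}(n+1)^\gamma\,dx$. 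Collecting everything yields (ii).

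The computations are all routine integrations by parts on the torus; the only thing to be careful about — and the main "obstacle," such as it is — is bookkeeping the boundary-free cancellations of the $\dot m_c$ and $\dot j_c$ cross terms, which rely precisely on the choices in \eqref{def1} together with $\rho_c$ being conserved and the fluid having total mass $\int_{\T^3}(n+1)\,dx = 1$; once those are pinned down, the three identities fall out directly.
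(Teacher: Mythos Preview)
Your proposal is correct and follows exactly the approach the paper has in mind: the paper dismisses (i) and (ii) as ``a straightforward computation'' and gives for (iii) precisely the manipulation you wrote, namely $\rho_c(0)m_c'+j_c'=0$ so that $m_c'-j_c'=(1+\rho_c(0))m_c'=-\frac{1+\rho_c(0)}{\rho_c(0)}\int_{\T^3}\rho(u-v)\,dx$. Your detailed verification of (i) and (ii) via the weighted transport identity $\frac12\frac{d}{dt}\int\varrho|\phi|^2\,dx=\int\varrho\,\phi\cdot(\partial_t\phi+w\cdot\nabla\phi)\,dx$ (for $\varrho=\rho$, $w=u$ and $\varrho=n+1$, $w=v$ respectively), together with the vanishing of the $\dot m_c$ and $\dot j_c$ cross terms by the definitions in \eqref{def1} and the normalization $\int_{\T^3}(n+1)\,dx=1$, is exactly the ``straightforward computation'' left implicit in the paper.
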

\begin{proof} A straightforward computation gives  $(i)$ and  $(ii)$. For the estimate of $(iii)$, we use the conservation of total momentum, 
\[
\frac{d}{dt}\int_{\T^3} (\rho + (n+1)v)dx = \rho_c(0)m_c^\prime(t) + j_c^\prime(t) = 0.
\]
This implies that
\begin{align*}
\begin{aligned}
\frac12\frac{d}{dt}|m_c(t) - j_c(t)|^2 &= (m_c(t) - j_c(t)) \cdot (m_c^\prime(t) - j_c^\prime(t))\cr
&=(m_c(t) - j_c(t)) \cdot ((1 + \rho_c(0))m_c^\prime(t))\cr
&=-\frac{1 + \rho_c(0)}{\rho_c(0)} (m_c(t) - j_c(t)) \cdot \int_{\T^3} \rho (u-v) dx.
\end{aligned}
\end{align*}
\end{proof}
Now we define a temporal interacting energy-variation $\mathcal{E}$ and the corresponding dissipation $\mathcal{D}$ as follows.
\begin{align*}
\begin{aligned}
\mathcal{E}(t) &:= \int_{\T^3} \rho |u - m_c|^2 dx + \int_{\T^3} (n+1) | v - j_c|^2 dx + \frac{2}{\gamma - 1}\int_{\T^3} (n+1)^\gamma dx +  \frac{\rho_c}{1 + \rho_c}|m_c - j_c|^2,\cr
\end{aligned}
\end{align*} and
\begin{align*}
\begin{aligned}
\mathcal{D}(t) &:= \mu\int_{\T^3}|\nabla v|^2 dx + (\mu + \lambda)\int_{\T^3}|\nabla \cdot v|^2 dx + \int_{\T^3} \rho |u-v|^2 dx,
\end{aligned}
\end{align*}
where $\rho_c := \rho_c(0)$. Then it follows from Lemma \ref{lem:asym} that 
\[
\frac12\frac{d}{dt}\mathcal{E}(t) + \mathcal{D}(t) = 0. 
\]
We next provide the following elementary estimates for the pressure and local momentum of the compressible fluid.
\begin{lemma}\label{press-lem}
\emph{\cite{F-Z-Z}} 1. Let $r_0, \bar{r} > 0$ and $\gamma > 1$ be
given constants, and set
\[
f(r;r_0) := r\int_{r_0}^{r} \frac{h^{\gamma} - r_0^{\gamma}}{h^2} \,dh,
\]
for $r \in [0,\bar{r}]$. Then, there exist positive constants
$C_1$ and $C_2$ such that
\[
C_1(r_0, \bar{r})(r - r_0)^2 \leq f(r;r_0) \leq C_2(r_0, \bar{r})(r - r_0)^2 \quad \mbox{for all } r \in [0,\bar{r}].
\]
2. There holds 
\[
\frac{1}{\gamma-1}\frac{d}{dt}\int_{\T^3} (n+1)^\gamma dx = \frac{d}{dt}\int_{\T^3} f(n+1;1) dx.
\]
\end{lemma}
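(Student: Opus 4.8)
The statement is elementary, and the plan is to reduce both parts to the closed form of $f$ obtained by evaluating the defining integral. For part~1, I would first carry out the routine integration, writing the integrand as $h^{\gamma-2} - r_0^{\gamma}h^{-2}$, to obtain
\[
f(r;r_0) \;=\; \frac{r^{\gamma}}{\gamma-1} \;-\; \frac{\gamma\,r_0^{\gamma-1}}{\gamma-1}\,r \;+\; r_0^{\gamma}, \qquad r\in(0,\bar r],
\]
with the value at $r=0$ taken to be the right-hand side, $r_0^{\gamma}$, which is also $\lim_{r\to 0^+}f(r;r_0)$. From this formula $f(\cdot;r_0)$ is continuous on $[0,\bar r]$, smooth on $(0,\bar r]$, and convex there since $f''(r;r_0)=\gamma r^{\gamma-2}\ge 0$; moreover a direct check gives $f(r_0;r_0)=0$ and $f'(r_0;r_0)=0$, so that $f(r;r_0)>0$ for every $r\in[0,\bar r]\setminus\{r_0\}$ (using also $f(0;r_0)=r_0^{\gamma}>0$). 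Consequently $r\mapsto f(r;r_0)/(r-r_0)^2$ is continuous and strictly positive on $[0,\bar r]\setminus\{r_0\}$, and by a second-order Taylor expansion of $f(\cdot;r_0)$ at $r_0$ it extends continuously across $r_0$ with limit $\tfrac12 f''(r_0;r_0)=\tfrac{\gamma}{2}r_0^{\gamma-2}>0$. Hence it is a continuous, strictly positive function on the compact interval $[0,\bar r]$; taking $C_1(r_0,\bar r)$ and $C_2(r_0,\bar r)$ to be its minimum and maximum gives the two-sided bound. (If one prefers to avoid the closed form, the nonnegativity of $f$ is immediate since the integrand $\tfrac{h^{\gamma}-r_0^{\gamma}}{h^2}$ has the same sign as $h-r_0$, but the closed form is needed anyway for part~2 and for the quantitative bounds.)

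For part~2, I would specialize the closed form to $r_0=1$ and $r=n+1$, which gives
\[
f(n+1;1)\;=\;\frac{1}{\gamma-1}(n+1)^{\gamma}\;-\;\frac{\gamma}{\gamma-1}(n+1)\;+\;1 .
\]
Thus $f(n+1;1)$ differs from $\tfrac{1}{\gamma-1}(n+1)^{\gamma}$ only by the affine expression $-\tfrac{\gamma}{\gamma-1}(n+1)+1$ in $n$, and therefore
\[
\frac{d}{dt}\int_{\T^3}\!\Big(\tfrac{1}{\gamma-1}(n+1)^{\gamma}-f(n+1;1)\Big)\,dx \;=\; \frac{\gamma}{\gamma-1}\,\frac{d}{dt}\int_{\T^3} n\,dx \;=\;0 ,
\]
by the conservation of mass $\tfrac{d}{dt}\int_{\T^3}n\,dx=0$ from Lemma~\ref{lem:energy}~(i), since the additive constant integrates to a time-independent quantity over $\T^3$. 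This is precisely the second assertion. (Equivalently: differentiate under the integral, use $f'(n+1;1)=\tfrac{\gamma}{\gamma-1}\big((n+1)^{\gamma-1}-1\big)$, so that $\tfrac{d}{dt}\tfrac{1}{\gamma-1}(n+1)^{\gamma}-\tfrac{d}{dt}f(n+1;1)=\tfrac{\gamma}{\gamma-1}\partial_t n$, and integrate in $x$.)

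I do not expect any genuine obstacle: the lemma is a direct computation and is in any case quoted from \cite{F-Z-Z}. The only points requiring a little care are the two degenerate values in part~1 — the endpoint $r=0$, where the integral representation is improper so one must argue via the closed form, and $r=r_0$, where $f(r;r_0)/(r-r_0)^2$ is a $0/0$ expression handled by the Taylor expansion above. Everything in part~2 is exact algebra combined with the already-established conservation of mass.
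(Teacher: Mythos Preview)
Your argument is correct. The paper does not actually prove this lemma; it is stated with a citation to \cite{F-Z-Z} and used without further justification, so there is no in-paper proof to compare against. Your approach --- computing the closed form $f(r;r_0)=\tfrac{r^\gamma}{\gamma-1}-\tfrac{\gamma r_0^{\gamma-1}}{\gamma-1}r+r_0^\gamma$, then using convexity and compactness for part~1, and mass conservation (Lemma~\ref{lem:energy}(i)) for part~2 --- is exactly the standard elementary proof and handles the two delicate points (the improper integral at $r=0$ and the $0/0$ at $r=r_0$) cleanly.
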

\begin{lemma}\label{jc-est-lem} Let $j_c$ be the local momentum  of the compressible fluid defined in \eqref{def1}. Then there holds
\bq\label{est-jc}
|j_c|^2 \leq E_0 \quad \mbox{and} \quad |j_c^\prime|^2 \leq \rho_c\int_{\T^3} \rho|u-v|^2 dx.
\eq
\end{lemma}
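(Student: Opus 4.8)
The plan is to prove the two bounds in \eqref{est-jc} directly from the definition $j_c(t) = \int_{\T^3}(n+1)v\,dx$ and the energy structure already established in Lemma \ref{lem:energy} and Remark \ref{rmk:energy}. For the first bound, I would apply the Cauchy--Schwarz inequality in the form $j_c = \int_{\T^3} \sqrt{n+1}\,\sqrt{n+1}\,v\,dx$, so that
\[
|j_c|^2 \le \lt(\int_{\T^3}(n+1)\,dx\rt)\lt(\int_{\T^3}(n+1)|v|^2\,dx\rt).
\]
Now $\int_{\T^3}(n+1)\,dx = \int_{\T^3}1\,dx + \int_{\T^3}n_0\,dx = |\T^3| = 1$ by the conservation of mass in Lemma \ref{lem:energy}(i) and the zero-mass condition $\int_{\T^3}n_0\,dx=0$ (here $|\T^3|=1$ with the normalization used in the paper), while $\int_{\T^3}(n+1)|v|^2\,dx \le E(t) \le E_0$ by the definition of $E$ and Remark \ref{rmk:energy}. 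Combining these gives $|j_c|^2 \le E_0$.

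For the second bound, I would differentiate $j_c$ in time. Since $\pa_t((n+1)v) = -\nabla_x\cdot((n+1)v\otimes v) - \nabla_x p(n+1) - Lv + \rho(u-v)$ from $\eqref{two-hydro-eqns-1}_4$, integrating over the periodic domain $\T^3$ kills the three divergence/Lam\'e terms (the Lam\'e term $Lv = -\mu\Delta v - (\mu+\lambda)\nabla(\nabla\cdot v)$ is a sum of spatial derivatives, and $\nabla p(n+1)$ is a gradient), leaving
\[
j_c^\prime(t) = \int_{\T^3}\rho(u-v)\,dx.
\]
Then Cauchy--Schwarz again, writing $\rho(u-v) = \sqrt\rho\,\sqrt\rho\,(u-v)$, yields
\[
|j_c^\prime|^2 \le \lt(\int_{\T^3}\rho\,dx\rt)\lt(\int_{\T^3}\rho|u-v|^2\,dx\rt) = \rho_c\int_{\T^3}\rho|u-v|^2\,dx,
\]
where $\rho_c = \rho_c(0) = \int_{\T^3}\rho\,dx$ is constant in time by conservation of mass. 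This is exactly the claimed estimate.

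I do not anticipate a genuine obstacle here; the lemma is essentially a bookkeeping consequence of the conservation laws and energy dissipation identity. The only point requiring a little care is the normalization $|\T^3|=1$ (so that $j_c$ as defined in \eqref{def1} with a bare integral, rather than an average, is consistent with $\int_{\T^3}(n+1)\,dx=1$); if the paper's convention had $|\T^3|\neq 1$ one would instead get $|j_c|^2 \le |\T^3|E_0$, but with the stated normalization the constant is simply $1$. A second minor point is justifying that the boundary terms vanish when differentiating $j_c$: this is immediate on $\T^3$ since all the dropped terms are spatial divergences or gradients of periodic functions, and the classical solution has enough regularity (from $W\in\mi_s(T)$ with $s>\tfrac52$) to integrate by parts. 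I would write the proof in three short displays following the structure above.
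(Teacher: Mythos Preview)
Your proof is correct and follows essentially the same approach as the paper: Cauchy--Schwarz on $j_c=\int_{\T^3}\sqrt{n+1}\cdot\sqrt{n+1}\,v\,dx$ together with $\int_{\T^3}(n+1)\,dx=1$ for the first bound, and the identity $j_c'=\int_{\T^3}\rho(u-v)\,dx$ followed by Cauchy--Schwarz for the second. The paper's version is slightly terser (it writes $|j_c|\le\int_{\T^3}(n+1)|v|\,dx$ first and leaves the derivation of $j_c'$ implicit), but the substance is identical.
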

\begin{proof}We first obtain by using the dissipation of the total energy in Remark \ref{rmk:energy} that
\[
|j_c|\leq \int_{\T^3} (n+1)|v|dx \leq \lt(\int_{\T^3} (n+1)|v|^2 dx\rt)^\frac12 \leq E(t)^\frac12 \leq E_0^\frac12,
\]
where we used $\int_{\T^3} n \,dx = 0$. We also easily find that
\[
|j_c^\prime|  = \left| \int_{\T^3} \rho(u-v) dx \right| \leq \int_{\T^3} \rho|u-v|dx \leq \rho_c^\frac12\lt(\int_{\T^3} \rho|u-v|^2 dx\rt)^\frac12.
\]
This completes the proof.
\end{proof}
Using Lemma \ref{press-lem}.2, the temporal interacting energy-variation $\mathcal{E}$ can be written as
\begin{align*}
\begin{aligned}
\mathcal{E}(t) &:= \int_{\T^3} \rho |u - m_c|^2 dx + \int_{\T^3} (n+1) | v - j_c|^2 dx + 2\int_{\T^3} (n+1)\int_1^{n+1}\frac{h^\gamma - 1}{h^2}\,dhdx +  \frac{\rho_c}{1 + \rho_c}|m_c - j_c|^2,
\end{aligned}
\end{align*}
and it satisfies 
\[
\frac12 \frac{d}{dt}\mathcal{E}(t) + \mathcal{D}(t) = 0.
\]
Furthermore, we find that $\mathcal{E}$ is equivalent to our proposed Lyapunov functional $\mathcal{L}$ in Proposition \ref{prop:large-t}, i.e., there exists a positive constant $C$ such that
\[
C^{-1} \mathcal{L}(t) \leq \mathcal{E}(t) \leq C \mathcal{L}(t) \quad \mbox{for} \quad t \in [0,T],
\]
due to Lemma \ref{press-lem}-(1).
However, the dissipation $\mathcal{D}$ does not give the desired damping effect for the Lyapunov functional $\mathcal{L}$. More specifically, one can obtain the damping terms from $\mathcal{D}$ except the one for the density of the compressible fluid.
\begin{lemma}\label{lem:dissi}There exists a positive constant $C>0$ such that
\[
\mathcal{L}_{p}(t) \leq C\mathcal{D}(t) \quad \mbox{for} \quad t \in [0,T],
\]
where $\mathcal{L}_p$ is given by
\[
\mathcal{L}_p := \mathcal{L} - \int_{\T^3} n^2 dx = \int_{\T^3} \rho |u-m_c|^2 dx + \int_{\T^3} (n+1)| v - j_c|^2 dx + |m_c - j_c|^2.
\]
\end{lemma}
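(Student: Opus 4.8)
The plan is to bound each of the three pieces of $\mathcal{L}_p$ by $\mathcal{D}(t)$, using Poincaré–type inequalities on $\T^3$, the equivalence of $\rho$-weighted and unweighted $L^2$ norms furnished by the bounds $\rho\in[0,\bar\rho]$, $n+1\in[0,\bar n]$ from the hypotheses of Proposition~\ref{prop:large-t}, and the conservation laws of Lemma~\ref{lem:energy}. The key observation is that $\mathcal{D}(t)$ already contains the two \emph{full} gradient terms $\mu\|\nabla v\|_{L^2}^2+(\mu+\lambda)\|\nabla\cdot v\|_{L^2}^2$ together with the relative-velocity dissipation $\int_{\T^3}\rho|u-v|^2\,dx$; these are exactly the ingredients needed to control the fluctuations of $v$, of $m_c-j_c$, and of $u-m_c$ respectively.

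First I would handle $\int_{\T^3}(n+1)|v-j_c|^2\,dx$. Since $n+1\le\bar n$, it suffices to bound $\int_{\T^3}|v-j_c|^2\,dx$. Because $\int_{\T^3}n\,dx=0$, one has $j_c=\int_{\T^3}(n+1)v\,dx=\int_{\T^3}v\,dx+\int_{\T^3}nv\,dx$, so $j_c$ differs from the spatial mean $\bar v:=\int_{\T^3}v\,dx$ only by the lower-order term $\int_{\T^3}nv\,dx$, which is quadratically small and controllable by the smallness of $E_0$ (via Lemma~\ref{jc-est-lem} and Remark~\ref{rmk:energy}); after absorbing it, the Poincaré inequality $\|v-\bar v\|_{L^2}^2\lesssim\|\nabla v\|_{L^2}^2$ finishes this term against the first term of $\mathcal{D}$. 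Next, for $|m_c-j_c|^2$: write $m_c-j_c=(m_c-\bar u)+(\bar u-\bar v)+(\bar v-j_c)$ where $\bar u=\int_{\T^3}u\,dx$ — actually it is cleaner to observe directly that $m_c-j_c=\int_{\T^3}\rho(u-v)\,dx/\rho_c+\big(\text{mean-zero corrections}\big)$, or simply to use $m_c-v$ averaged, but the cleanest route is: since $\int_{\T^3}\rho(u-v)\,dx$ and the already-controlled quantities $\|v-j_c\|_{L^2}$, $\|u-m_c\|_{\sqrt\rho}$ appear, one estimates $|m_c-j_c|\lesssim |\!\int_{\T^3}\rho(u-v)\,dx| + \ldots \lesssim \rho_c^{1/2}(\int_{\T^3}\rho|u-v|^2\,dx)^{1/2}+(\text{terms already bounded})$, using $\rho_c=\rho_c(0)>0$; the leading term is exactly the third term of $\mathcal{D}$.

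The genuinely delicate piece is $\int_{\T^3}\rho|u-m_c|^2\,dx$, since $\mathcal{D}$ contains \emph{no} gradient of $u$ and no direct fluctuation of $u$ — only $\int_{\T^3}\rho|u-v|^2\,dx$ and derivatives of $v$. The idea is the triangle inequality $|u-m_c|\le|u-v|+|v-j_c|+|j_c-m_c|$, so that
\[
\int_{\T^3}\rho|u-m_c|^2\,dx \lesssim \int_{\T^3}\rho|u-v|^2\,dx + \bar\rho\int_{\T^3}|v-j_c|^2\,dx + \rho_c|m_c-j_c|^2,
\]
and the last two summands have just been shown to be $\lesssim\mathcal{D}(t)$ while the first is literally in $\mathcal{D}(t)$. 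I expect the main obstacle to be the bookkeeping around $j_c$ versus the spatial mean of $v$ (and similarly $m_c$), i.e. making sure the quadratic-in-$n$ error terms that arise when passing between $\int_{\T^3}(n+1)v\,dx$ and $\int_{\T^3}v\,dx$ are genuinely absorbable — this is where the smallness of $E(0)$ enters, and one must be careful that the constant $C$ does not secretly depend on $T$. Once these reductions are in place, collecting the three estimates and using $n+1\le\bar n$, $\rho\le\bar\rho$ throughout yields $\mathcal{L}_p(t)\le C\mathcal{D}(t)$ with $C$ depending only on $\bar\rho$, $\bar n$, $\rho_c(0)$, $\mu$, $\lambda$, $\gamma$, and the (small) size of the data, as claimed.
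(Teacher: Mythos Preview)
Your overall strategy is correct and matches the paper's: bound the $(n+1)$--weighted fluctuation of $v$ by $\|\nabla v\|_{L^2}^2$ via Poincar\'e, then recover both $\int\rho|u-m_c|^2$ and $|m_c-j_c|^2$ from $\int\rho|u-v|^2$ by the decomposition $u-v=(u-m_c)+(m_c-j_c)+(j_c-v)$. The paper carries this out by expanding $\int\rho|u-v|^2$ and using $\int\rho(u-m_c)\,dx=0$ to kill one cross term, arriving at
\[
\int_{\T^3}\rho|u-v|^2\,dx \ \ge\ \tfrac12\int_{\T^3}\rho|u-m_c|^2\,dx+\tfrac{\rho_c}{2}|m_c-j_c|^2-3\int_{\T^3}\rho|v-j_c|^2\,dx,
\]
which is exactly your triangle-inequality step written from the other side.

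The one place you diverge is the passage from $j_c$ to the spatial mean $v_c:=\int_{\T^3}v\,dx$. You propose to control $j_c-v_c=\int_{\T^3}nv\,dx$ by invoking the smallness of $E_0$; this works (since $\int n=0$ gives $|j_c-v_c|\le\|n\|_{L^2}\|v-v_c\|_{L^2}$ and $\|n\|_{L^2}$ is bounded by the hypothesis $n+1\in[0,\bar n]$), but the smallness is not actually needed here and the constant you obtain is slightly worse. The paper instead uses the cleaner identity
\[
|v_c-j_c|^2=\Big|\int_{\T^3}(n+1)(v-v_c)\,dx\Big|^2\le\int_{\T^3}(n+1)|v-v_c|^2\,dx,
\]
which follows from Cauchy--Schwarz and $\int_{\T^3}(n+1)\,dx=1$; this gives directly
\[
\tfrac12\int_{\T^3}(n+1)|v-j_c|^2\,dx\le 2\int_{\T^3}(n+1)|v-v_c|^2\,dx\le 2\bar n\,c_p\|\nabla v\|_{L^2}^2
\]
with no appeal to $E_0$ at all. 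So your anticipated ``main obstacle'' around the $j_c$-vs-mean bookkeeping dissolves, and the final constant depends only on $\bar\rho$, $\bar n$, $\rho_c$, $\mu$, and the Poincar\'e constant.
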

\begin{remark}
In \cite{Pere}, the compressible Navier-Stokes equations without the pressure term is studied. If we consider the system $\eqref{two-hydro-eqns-1}_3-\eqref{two-hydro-eqns-1}_4$ with no pressure term, i.e., p $\equiv 0$, then it follows from
Lemma \ref{lem:dissi} that we have the exponential alignment between the two fluid velocities by choosing the Lyapunov functional $\mathcal{L}_p$ instead of $\mathcal{L}$. More specifically, if we set
\[
\mathcal{E}_p(t) := \int_{\T^3} \rho |u - m_c|^2 dx + \int_{\T^3} (n+1) | v - j_c|^2 dx +  \frac{\rho_c}{1 + \rho_c}|m_c - j_c|^2,
\]
then it is obvious to get there exists a positive constant $C > 0$ such that
\[
C^{-1}\mathcal{L}_p(t) \leq \mathcal{E}_p(t) \leq C\mathcal{L}_p(t) \quad \mbox{for} \quad t \in [0,T].
\]
Then since $\mathcal{E}_p(t) \leq C\mathcal{L}_p(t) \leq C\mathcal{D}(t)$, we obtain
\[
\frac12\frac{d}{dt}\mathcal{E}(t) + C^{-1}\mathcal{E}(t) \leq 0, \quad \mbox{for} \quad t \in [0,T],
\]
and this yields
\[
\mathcal{L}_p(t) \leq \mathcal{E}(t) \leq \mathcal{E}_0e^{-Ct}, \quad \mbox{for} \quad t \in [0,T].
\]
\end{remark}
\begin{proof}[Proof of Lemma \ref{lem:dissi}] We first find that
\begin{align}\label{lem:equiv1}
\begin{aligned}
\frac12\int_{\T^3} (n+1)|v - j_c|^2 dx &\leq \int_{\T^3} (n+1)|v-v_c|^2 dx + |v_c - j_c|^2 \leq 2\int_{\T^3} (n+1)|v-v_c|^2 dx\leq 2\bar n c_p\int_{\T^3} |\nabla v|^2 dx.
\end{aligned}
\end{align}
Here we have used
\[
|v_c - j_c|^2 = \lt| \int_{\T^3} (n+1)(v - v_c)dx\rt|^2 \leq \int_{\T^3} (n+1)|v - v_c|^2 dx,
\]
where $c_p$ is the constant from Sobolev embedding. We also deduce that 
\begin{align}\label{lem:equiv2}
\begin{aligned}
\frac12\lt( \int_{\T^3} \rho|u - m_c|^2 dx + \rho_c |m_c - j_c|^2\rt) &\leq \int_{\T^3} \rho |u - v|^2 dx + 3\int_{\T^3} \rho |v - j_c|^2 dx\cr
&\leq \int_{\T^3} \rho |u-v|^2 dx + 6c_p\lt( \rho_c \bar{n}+ \bar{\rho}\rt)\int_{\T^3} |\nabla v|^2 dx,
\end{aligned}
\end{align}
where we used
\begin{align*}
\begin{aligned}
\int_{\T^3} \rho|u-v|^2 dx &= \int_{\T^3} \rho|u-m_c + m_c - j_c + j_c - v|^2 dx\cr
&= \int_{\T^3} \rho|u-m_c|^2 dx + \rho_c|m_c - j_c|^2 + \int_{\T^3} \rho|v-j_c|^2 dx\cr
&\quad + 2\int_{\T^3} \rho(m_c - j_c)\cdot (j_c - v) dx + 2\int_{\T^3} \rho (u - m_c) \cdot (j_c - v) dx\cr
&\geq \frac12\int_{\T^3} \rho|u-m_c|^2 dx + \frac{\rho_c}{2}|m_c - j_c|^2 - 3\int_{\T^3} \rho |v - j_c|^2 dx.
\end{aligned}
\end{align*}
Hence we combine the estimates \eqref{lem:equiv1} and \eqref{lem:equiv2} to conclude
\begin{align*}
\begin{aligned}
& \frac12\lt( \int_{\T^3} \rho |u - m_c|^2 dx + \rho_c |m_c - j_c|^2 + \int_{\T^3} (n+1)|v - j_c|^2 dx\rt)\cr
& \qquad \leq \int_{\T^3} \rho |u-v|^2 dx + 2c_p\lt(  3\lt( \rho_c \bar n + \bar \rho \rt) + \bar n\rt)\int_{\T^3} |\nabla v|^2 dx,
\end{aligned}
\end{align*}
that is,
\[
\mathcal{L}_p(t) \leq C\mathcal{D}(t), \quad \mbox{for} \quad t \in [0,T],
\]
where $C$ is a positive constant satisfying
\[
C \geq \frac{2}{\min\{1,\rho_c\}}\max\lt\{ 1, \frac{2c_p\lt(  3\lt( \rho_c \bar n + \bar \rho \rt) + \bar n\rt)}{\mu} \rt\} > 0.
\]
This completes the proof.
\end{proof}
\newcommand{\mb}{\mathcal{B}}
In order to obtain the correct dissipation of $\mathcal{L}$, we present the periodic version of Bogovskii's argument, and it shows that pressure of the compressible fluid gives the desired dissipation. 


For $f\in H^{s-1}(\T^3),s\ge1$ with $\int_{\T^3} f=0$, one can define a linear operator 
$$\mathcal{B} : \left\{ f \in H^{s-1}(\T^3) | \int_{\T^3} f dx = 0
\right\} \mapsto \lt[H^{s}(\T^3)\rt]^3$$ by 
$\mb[f]:= \nabla \phi$ where  $\phi$ is a unique solution to the equation
$\Delta \phi = f$ in $\T^3$ with $\int_{\T^3} \phi  =0$.  
 Moreover there holds 
 \begin{equation}\label{elliptic0}
\| \phi\|_{H^{s+1}} \leq C\|f\|_{H^{s-1}} \quad \mbox{for some positive constant} \quad C > 0.
\end{equation}
This unique solvability and the estimate is true by the standard elliptic regularity theory \cite[Lemma 7.9]{Majda}. 

Then the relations between the norms of $\mb[f]$ and $f$ can be given in the following lemma.
%
%
%
%
%
\begin{lemma}\label{oper-b-lem}
For $f\in L^2(\T^3)$, 
there exists 
a positive constant $C^*$ independent of $f$ such that
\[
\|\mathcal{B}[f] \|_{H^{1}(\T^3)} \leq C^*\| f \|_{L^{2}(\T^3)}.
\]
Moreover, if $f \in L^2(\T^3)$ given by the form
$f = \nabla \cdot g$ for some $g \in \lt[ H^1(\T^3)\rt]^3$,
    then
    \[
    \| \mathcal{B}[f] \|_{L^{2}(\T^3)} \leq C^*\|g\|_{L^{2}(\T^3)}.
    \]
%
%
\end{lemma}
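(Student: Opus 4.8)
The plan is to exhibit $\mathcal B[f]$ explicitly via the scalar potential $\phi$ and then quote the elliptic estimate \eqref{elliptic0} together with an integration-by-parts argument for the second, sharper bound. For the first inequality, recall $\mathcal B[f] = \nabla\phi$ with $\Delta\phi = f$ and $\int_{\T^3}\phi = 0$. By \eqref{elliptic0} with $s = 1$ (so that $f \in H^0 = L^2$), we have $\|\phi\|_{H^2} \le C\|f\|_{L^2}$, and hence
\[
\|\mathcal B[f]\|_{H^1} = \|\nabla\phi\|_{H^1} \le \|\phi\|_{H^2} \le C\|f\|_{L^2},
\]
which gives the claim with $C^* = C$.

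For the second inequality, suppose $f = \nabla\cdot g$ with $g \in [H^1(\T^3)]^3$; note that automatically $\int_{\T^3} f\,dx = 0$ by the divergence theorem on the torus, so $\mathcal B[f]$ is well defined. The key point is to estimate $\|\nabla\phi\|_{L^2}^2 = \int_{\T^3}|\nabla\phi|^2\,dx$ by testing the equation $\Delta\phi = \nabla\cdot g$ against $\phi$ itself. Integrating by parts twice on the periodic domain (no boundary terms),
\[
\int_{\T^3}|\nabla\phi|^2\,dx = -\int_{\T^3}\phi\,\Delta\phi\,dx = -\int_{\T^3}\phi\,\nabla\cdot g\,dx = \int_{\T^3}\nabla\phi\cdot g\,dx \le \|\nabla\phi\|_{L^2}\|g\|_{L^2}
\]
by Cauchy--Schwarz. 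Dividing by $\|\nabla\phi\|_{L^2}$ (the case $\nabla\phi \equiv 0$ being trivial) yields $\|\mathcal B[f]\|_{L^2} = \|\nabla\phi\|_{L^2} \le \|g\|_{L^2}$, so the second bound holds with the same $C^*$ (in fact with constant $1$).

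I do not expect any real obstacle here: both estimates reduce to standard periodic elliptic theory, and the only mild subtlety is checking that the hypothesis $f = \nabla\cdot g$ is consistent with the zero-average requirement built into the domain of $\mathcal B$ — which it is, trivially, on $\T^3$. One should also be slightly careful that $C^*$ can be chosen uniformly (independent of $f$), but this is immediate since the constant in \eqref{elliptic0} and the constant $1$ in the Cauchy--Schwarz step depend only on the domain $\T^3$. Taking $C^* = \max\{C, 1\}$ completes the proof.
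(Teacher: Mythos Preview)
Your proof is correct and follows essentially the same approach as the paper, which simply says both estimates ``immediately follow by the elliptic estimate \eqref{elliptic0}.'' Your argument is in fact more explicit: for the second bound you give a clean integration-by-parts computation yielding the sharp constant $1$, whereas the paper's one-line appeal to \eqref{elliptic0} implicitly invokes the $H^{-1}\to H^1$ case of elliptic regularity (i.e., $\|\phi\|_{H^1}\lesssim \|\nabla\cdot g\|_{H^{-1}}\lesssim \|g\|_{L^2}$), which lies just outside the stated range $s\ge 1$ but is of course standard.
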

\begin{proof} 
The desired results immediately follow by the elliptic estimate \eqref{elliptic0}.
\end{proof}
We are now ready to present the proof of Proposition \ref{prop:large-t}.
\begin{proof}[Proof of Proposition \ref{prop:large-t}] We first define a modified temporal interacting energy-variation $\mathcal{E}^\sigma$ and the corresponding dissipation $\mathcal{D}^\sigma$ as follows.
\begin{align}\label{ep_s}
\begin{aligned}
\mathcal{E}^\sigma(t) &:= \mathcal{E}(t) - 2\sigma\int_{\T^3} (n+1)(v - j_c) \mb[n] dx,\cr
\mathcal{D}^\sigma(t) &:= \mathcal{D}(t) + \sigma\lt( \int_{\T^3} ((n+1)v\otimes v):\nabla \mb[n] + n((n+1)^\gamma - 1) - \mu \nabla v : \nabla \mb[n] - (\mu + \lambda)(\nabla \cdot v)n \,dx\rt)\cr
& \quad + \sigma\lt( \int_{\T^3} \rho (u-v)\mb[n] - (n+1)(v - j_c)\mb[\nabla \cdot ((n+1)v)] - ((n+1)j_c)_t \,\mb[n]\,dx\rt),
\end{aligned}
\end{align}
for any $\sigma > 0$. Then by a straightforward computation, we get 
\[
\frac12\frac{d}{dt}\mathcal{E}^\sigma(t) + \mathcal{D}^\sigma(t) = 0.
\]

For the rest of this section, for the sake of clarity, we distinguish the positive constants which will appear in the following estimates. 
We first shall show that $\mathcal{E}^\sigma$ is equivalent to $\mathcal{L}$, i.e., there are positive constants $c_1$ and $c_2$ such that 
\begin{equation}\label{equiv0}
c_1\mathcal{L}(t) \leq \mathcal{E}^\sigma(t) \leq c_2\mathcal{L}(t) \text{ for } \sigma > 0 \text{ sufficiently small}.
\end{equation}
More specifically, $c_1$ and $c_2$ are given by
\[
c_1 = \min \lt\{ 1 - \sigma \bar n, \frac{\rho_c}{\rho_c +1}, C_1(\bar n, \gamma) - \sigma C^* \rt\} \quad \mbox{and} \quad c_2 = \max \lt\{ 1 + \sigma \bar n, \frac{\rho_c}{\rho_c + 1}, C_2(\bar n, \gamma) + \sigma C^*\rt\}.
\]
To see this, we obtain the upper bound as
\begin{equation}
\begin{split}
\mathcal{E}^\sigma(t) &:= \mathcal{E}(t) - 2\sigma\int_{\T^3} (n+1)(v - j_c) \mb[n] dx \\
& \le \mathcal{E}(t) + \sigma\lt( \bar n\int_{\T^3} (n+1)|v-j_c|^2dx + C^* \int_{\T^3} n^2 dx\rt)\\
&=  \int_{\T^3} \rho |u - m_c|^2 dx + \int_{\T^3} (n+1) | v - j_c|^2 dx + \frac{2}{\gamma - 1}\int_{\T^3} (n+1)^\gamma dx +  \frac{\rho_c}{1 + \rho_c}|m_c - j_c|^2 \\
& + \sigma\lt( \bar n\int_{\T^3} (n+1)|v-j_c|^2dx + C^* \int_{\T^3} n^2 dx\rt)\\
&\le \max \lt\{ 1 + \sigma \bar n, \frac{\rho_c}{\rho_c + 1}, C_2(\bar n, \gamma) + \sigma C^*\rt\} \mathcal{L}(t).
\end{split}
\end{equation}
Here we've used Lemma \ref{oper-b-lem}:
\begin{equation}\label{sib}
\lt| \sigma\int_{\T^3} (n+1)(v - j_c)\mb[n] dx\rt| \leq \sigma\lt( \frac{\bar n}{2}\int_{\T^3} (n+1)|v-j_c|^2dx + \frac{C^*}{2}\int_{\T^3} n^2 dx\rt),
\end{equation}
the definition of $\mathcal{E}^\sigma(t)$ in \eqref{ep_s}, and
Lemma \ref{press-lem}-(1).
The lower bound can be proved similarly. 

We claim that there exists a positive constant $c_3 >0$ such that $\mathcal{L}(t) \leq c_3\mathcal{D}^\sigma(t)$ for sufficiently small $\sigma>0$. 
For the sake of clarity, we rewrite $\mathcal{D}^\sigma(t)$ as 
\begin{align*}
\begin{aligned}
\mathcal{D}^\sigma(t) &= \mu\int_{\T^3}|\nabla v|^2 dx + (\mu+ \lambda)\int_{\T^3} |\nabla \cdot v|^2 dx + \int_{\T^3} \rho |u-v|^2 dx+ \sigma\int_{\T^3} ((n+1)v\otimes v) : \nabla \mb[n]dx\cr
&+ \sigma \int_{\T^3}n((n+1)^\gamma - 1)dx- \sigma \mu \int_{\T^3} \nabla v : \nabla \mb[n]dx - \sigma(\mu + \lambda)\int_{\T^3} (\nabla \cdot v)n\,dx \cr
&+ \sigma\int_{\T^3}\rho (u-v)\cdot \mb[n]dx - \sigma\int_{\T^3}(n+1)(v-j_c)\cdot\mb[\nabla \cdot((n+1)v)]dx - \sigma \int_{\T^3}((n+1)j_c)_t\cdot\mb[n]dx\cr
& =: \sum_{i=1}^{10}I_i.
\end{aligned}
\end{align*}
For the estimate of $I_4$, we can rewrite it by adding and subtracting to get 
\begin{align*}
\begin{aligned}
I_4 &= \sigma \int_{\T^3} (n+1)(( v - j_c) \otimes v) : \nabla
\mb[n] dx + \sigma \int_{\T^3} (n+1)(j_c \otimes ( v -
j_c)) : \nabla \mb[n] dx \cr
&\quad + \sigma \int_{\T^3} n(j_c \otimes j_c) : \nabla \mb[n] dx \cr
 & =: I_{4}^1 + I_{4}^2 +  I_{4}^3.
\end{aligned}
\end{align*}
For the terms  $I_{4}^i$, $i=1,2,3$ can be estimated as 
\begin{align*}
\begin{aligned}
I_{4}^1 &\leq \frac{\sigma^{1/2}\bar n \|v\|_{L^\infty}}{2}
\int_{\T^3} (n+1)| v  - j_c|^2 dx + C^*\sigma^{3/2}\int_{\T^3} n^2 dx, \cr
I_{4}^2 &\leq \frac{\sigma^{1/2}\bar n E_0}{2}
\int_{\T^3} (n+1)| v  - j_c|^2 dx + C^*\sigma^{3/2}\int_{\T^3} n^2 dx, \cr
I_{4}^3 &\leq C^*\sigma E_0\int_{\T^3} n^2 dx,
\end{aligned}
\end{align*}
where we used Young's inequality together with \eqref{est-jc} and Lemma \ref{oper-b-lem}. Thus, we have
\[
I_4 \leq \lt(\frac{\sigma^{1/2}\bar n(\|v\|_{L^\infty} + E_0)}{2} \rt)\int_{\T^3} (n+1)|v-j_c|^2 dx + C^*\sigma(E_0 + \sigma^{1/2})\int_{\T^3}n^2 dx.
\]
For the estimate of $I_{5}$, by Taylor expansion, one can obtain
$$I_5  = \sigma\int_{\T^3}((n+1) - 1)((n+1)^\gamma - 1) dx \geq c_4\sigma\int_{\T^3} n^2 dx,$$
where $c_4$ depends only on $\bar n$. 

For the estimate of $I_{10}$, we notice that
\[
((n+1)j_c)_t = -\nabla \cdot((n+1)v) j_c + (n+1)j_c',
\]
and this yields that
\begin{align*}
\begin{aligned}
I_{10} &= -\sigma \int_{\T^3} j_c \cdot ((n+1)v \cdot \nabla \mb[n]) dx - \sigma \int_{\T^3} (n+1) j_c' \cdot \mb[n]dx\cr
& =: I_{10}^1 + I_{10}^2.
\end{aligned}
\end{align*}
Here $I_{10}^i,i=1,2$ are estimated by
\begin{align*}
\begin{aligned}
I_{10}^1 & = -\sigma \int_{\T^3} j_c\cdot ((n+1)(v - j_c) \cdot \nabla \mb[n]) dx -\sigma \int_{\T^3} j_c \cdot (nj_c \cdot \nabla \mb[n]) dx \cr
&\leq \frac{\sigma^{1/2}E_0\bar n}{2}\int_{\T^3} (n+1)|v-j_c|^2 dx + C^*\sigma^{3/2}\int_{\T^3} n^2 dx + C^*\sigma E_0\int_{\T^3} n^2 dx\cr
&\leq \frac{\sigma^{1/2}E_0\bar n}{2}\int_{\T^3} (n+1)|v-j_c|^2 dx + C^*\lt( \sigma^{3/2} + \sigma E_0\rt)\int_{\T^3} n^2 dx,\cr
I_{10}^2 &\leq \frac{\sigma^{1/2}\bar n^2}{2}|j_c'|^2 + C^*\sigma^{3/2}\int_{\T^3} n^2 dx\cr
&\leq \frac{\sigma^{1/2}\bar n^2 \rho_c}{2}\int_{\T^3} \rho |u-v|^2 dx + C^*\sigma^{3/2}\int_{\T^3} n^2 dx,
\end{aligned}
\end{align*}
where we used \eqref{est-jc} in Lemma \ref{jc-est-lem}. 

For $I_9$, we have that
\begin{align*}
\begin{aligned}
I_9 &= -\sigma\int_{\T^3} (n+1)(v-j_c)\mb[\nabla \cdot ((n+1)(v-j_c))]dx - \sigma\int_{\T^3} (n+1)(v-j_c)\mb[\nabla \cdot(nj_c)]dx\cr
&\leq C^*\sigma\bar n \int_{\T^3} (n+1)|v-j_c|^2 dx + \frac{\sigma^{1/2}\bar n}{2}\int_{\T^3} (n+1)|v-j_c|^2 dx + \frac{C^*\sigma^{3/2}|j_c|^2}{2}\int_{\T^3}n^2 dx\cr
&\leq \lt( C^*\sigma\bar n + \frac{\sigma^{1/2}\bar n}{2}\rt)\int_{\T^3} (n+1)|v-j_c|^2 dx + \frac{C^*\sigma^{3/2}E_0}{2}\int_{\T^3} n^2 dx,
\end{aligned}
\end{align*}
 Here we make use of Lemma \ref{oper-b-lem} for the second inequality.

Similarly as in the previous estimates, we can handle the rest of the terms as
\begin{align*}
\begin{aligned}
I_6 &\leq \frac\mu4\int_{\T^3} |\nabla v|^2 dx + C^*\sigma^2\mu \int_{\T^3} n^2 dx,\cr
I_7 &\leq \frac{\mu + \lambda}{4} \int_{\T^3} |\nabla \cdot v|^2 dx + C^*\sigma^2(\mu + \lambda)\int_{\T^3} n^2 dx,\cr
I_8 &\leq \frac14\int_{\T^3} \rho |u-v|^2 dx + C^*\sigma^2\bar\rho\int_{\T^3} n^2 dx.
\end{aligned}
\end{align*}
We now combine all the estimates above to find
\begin{align*}
\begin{aligned}
\mathcal{D}^\sigma(t) &\geq \frac\mu2\int_{\T^3} |\nabla v|^2 dx + \frac{\mu + \lambda}{2} \int_{\T^3} |\nabla \cdot v|^2 dx + \lt( \frac34 - \frac{\sigma^{1/2}\bar n^2 \rho_c}{2}\rt)\int_{\T^3} \rho |u-v|^2 dx\cr
& + c_5 \int_{\T^3} n^2 dx - c_6 \int_{\T^3} (n+1)|v - j_c|^2 dx,
\end{aligned}
\end{align*}
where $c_5,c_6$ are positive constants for $\sigma>0$ and $E_0 > 0$ small enough, and are given by
\begin{align*}
\begin{aligned}
c_5 &:= \sigma( c_4 - C^*(E_0 - \sigma^{1/2}(1 + E_0)- \sigma ( 2\mu + \lambda) + \sigma \bar \rho )), \quad c_6 := \sigma^{1/2}(\bar n (\|v\|_{L^\infty} + E_0) + C^*\sigma^{1/2}\bar n + \bar n).
\end{aligned}
\end{align*}
On the other hand, it follows from \eqref{lem:equiv1} that
\[
\frac12\int_{\T^3} (n+1)|v - j_c|^2 dx \leq 2\bar n c_p\int_{\T^3} |\nabla v|^2 dx,
\]
Then we obtain
\[
\mathcal{D}^\sigma(t) \geq c_7 \int_{\T^3} |\nabla v|^2 dx + c_8\int_{\T^3} \rho  |u-v|^2 dx + c_5 \int_{\T^3} n^2 dx,
\]
where 
\[
c_7 := \frac\mu2 - 4\bar n c_p c_6 \quad \mbox{and} \quad c_8 := \frac34 - \frac{\sigma^{1/2}\bar n^2 \rho_c}{2}.
\]
We again use the estimate in the proof of Lemma \ref{lem:dissi} to have
\begin{align*}
\begin{aligned}
\mathcal{D}^\sigma(t) &\geq \min\{c_7, c_8\}\lt( \int_{\T^3} |\nabla v|^2 dx + \int_{\T^3} \rho |u-v|^2 dx \rt) + c_5\int_{\T^3} n^2 dx\cr
&\geq  \frac{\min\{c_7,c_8\}}{2c_9}\lt( \int_{\T^3} \rho |u - m_c|^2 dx + \rho_c |m_c - j_c|^2 + \int_{\T^3} (n+1)|v - j_c|^2 dx\rt)  + c_5\int_{\T^3} n^2 dx\cr
&\geq \min\lt\{ \frac{\min\{c_7,c_8\}}{2c_9}c_{10} , c_5\rt\}\mathcal{L}(t),
\end{aligned}
\end{align*}
where
\[
c_9 := \max\lt\{1,  2c_p\lt(  3\lt( \rho_c \bar n + \bar \rho \rt) + \bar n\rt)\rt\} \quad \mbox{and} \quad c_{10} := \min\{\rho_c, 1\}.
\]
Hence, for $\sigma >0$ and $E_0 > 0$ small enough,  we have
\[
\frac{d}{dt}\mathcal{E}^\sigma(t) + \frac{c_{11}}{c_2}\mathcal{E}^\sigma(t) \leq 0,
\]
where $c_{11}$ is a positive constant given by
\[
c_{11} := \min\lt\{ \frac{\min\{c_7,c_8\}}{2c_9}c_{10} , c_5\rt\} > 0.
\]
This together with \eqref{equiv0} yields that
\[
\mathcal{L}(t) \leq \frac{\mathcal{E}^\sigma(t)}{c_1} \leq \frac{\mathcal{E}^\sigma(0)}{c_1}e^{-\frac{c_{11}}{c_2}t} \leq \frac{c_2}{c_1}\mathcal{L}(0) e^{-\frac{c_{11}}{c_2}t}, \quad t \geq 0.
\]
This completes the proof of Proposition \ref{prop:large-t}.
\end{proof}
%
%
%
\section{Global existence of the classical solutions} \label{sec5}
In this section, we provide the {\it a priori} estimates for global existence of the classical solutions to  \eqref{two-hydro-eqns-1}-\eqref{ini-two-hydro-eqns}. The  alignment estimate  between the two fluid velocities derived in Section \ref{sec4} plays a crucial role in obtaining the uniform bound for the density in the pressureless Euler equations. Using this we obtain the uniform estimates for the solution. This together with a standard continuation argument enable us to construct the global solution.
To this end, we first define
\[
\mathcal{S}(T;s) := \sup_{0 \leq t \leq T}\lt( \|u(t)\|_{H^{s}}^2 + \|n(t)\|_{H^{s}}^2 + \|v(t)\|_{H^{s}}^2\rt), 
\]
\[
\mathcal{S}^*(T;s) := \sup_{0 \leq t \leq T}\lt( \|\rho(t)\|_{H^s}^2 + \|u(t)\|_{H^{s+2}}^2 + \|n(t)\|_{H^{s+1}}^2 + \|v(t)\|_{H^{s+1}}^2\rt),
\]
and
\[
\mathcal{S}_0(s) := \|u_0\|_{H^s}^2 + \|n_0\|_{H^s}^2 + \|v_0\|_{H^s}^2, \quad \mathcal{S}^*_0(s) := \|\rho_0\|_{H^s}^2 + \|u_0\|_{H^{s+2}}^2 + \|n_0\|_{H^{s+1}}^2 + \|v_0\|_{H^{s+1}}^2.
\]
\newcommand{\eps}{\epsilon}
\begin{lemma}\label{lem:rn} Let $s> \frac52$ and $T > 0$ be given. Suppose $\mathcal{S}^*(T;s) \leq \epsilon_1$ for sufficiently small $\epsilon_1>0$.  Then we have
\bq\label{est:rho}
\sup_{0 \leq t \leq T}\|\rho(t)\|_{H^s} \leq C\|\rho_0\|_{H^s},
\eq
where $C$ is independent of $T$.
\end{lemma}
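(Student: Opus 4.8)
The plan is to carry out a standard $H^s$ energy estimate on the continuity equation $\pa_t \rho + u\cdot\nabla\rho + \rho\,\nabla\cdot u = 0$ and close it by Gr\"onwall's inequality; the whole issue is that the integrating factor must be controlled \emph{uniformly in $T$}, which is exactly where the large-time estimate of Section \ref{sec4} enters. First I would record the consequences of the hypothesis $\mathcal{S}^*(T;s)\le\epsilon_1$. Since $s>\tfrac52$, Sobolev embedding yields $\|\rho(t)\|_{L^\infty}+\|n(t)\|_{L^\infty}+\|v(t)\|_{L^\infty}\ls\epsilon_1^{1/2}$ and $\|\nabla u(t)\|_{L^\infty}\ls\|u(t)\|_{H^{s+2}}\ls\epsilon_1^{1/2}$ on $[0,T]$, so the hypotheses of Proposition \ref{prop:large-t} are met (with $\bar\rho,\bar n\sim\epsilon_1^{1/2}$, $\rho_c(0)>0$, and the smallness condition holding because $\mathcal{S}^*_0(s)\le\mathcal{S}^*(T;s)\le\epsilon_1$). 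Hence $\mathcal{L}(t)\le C_1\mathcal{L}_0 e^{-\lambda t}$ on $[0,T]$, and in particular $\int_{\T^3}\rho\,|u-m_c|^2\,dx\le C_1\mathcal{L}_0 e^{-\lambda t}$. Integrating the continuity equation along characteristics gives the lower bound $\rho(x,t)\ge(\min_{\T^3}\rho_0)\exp\lt(-\int_0^t\|\nabla u(\cdot,\tau)\|_{L^\infty}\,d\tau\rt)\ge\delta_0 e^{-C\epsilon_1^{1/2}t}$, so dividing the two bounds yields $\|u(t)-m_c(t)\|_{L^2}^2\ls e^{-(\lambda-C\epsilon_1^{1/2})t}$; taking $\epsilon_1$ small makes $\lambda':=\lambda-C\epsilon_1^{1/2}>0$.

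The next step upgrades this $L^2$ decay to decay of $\|\nabla u\|_{H^s}$. Since $m_c(t)$ is independent of $x$, we have $\nabla u=\nabla(u-m_c)$, so Gagliardo--Nirenberg interpolation between $L^2$ and $H^{s+2}$ gives
\[
\|\nabla u(t)\|_{H^s}\le\|u(t)-m_c(t)\|_{H^{s+1}}\le C\,\|u(t)-m_c(t)\|_{L^2}^{1/(s+2)}\,\|u(t)-m_c(t)\|_{H^{s+2}}^{(s+1)/(s+2)}.
\]
Here $\|u(t)-m_c(t)\|_{H^{s+2}}\le\|u(t)\|_{H^{s+2}}+|m_c(t)|\,|\T^3|^{1/2}$ is bounded by a constant independent of $T$, using $|m_c|\le(E_0/\rho_c(0))^{1/2}$, which follows from Cauchy--Schwarz and Remark \ref{rmk:energy}. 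Consequently $\|\nabla u(t)\|_{H^s}\le Ce^{-\tilde\lambda t}$ for some $\tilde\lambda>0$ and $C$ independent of $T$, and a fortiori $\|\nabla u(t)\|_{L^\infty}\ls\|\nabla u(t)\|_{H^s}\le Ce^{-\tilde\lambda t}$. Setting $g(t):=\|\nabla u(t)\|_{L^\infty}+\|\nabla u(t)\|_{H^s}$, this gives $\int_0^T g(\tau)\,d\tau\le\int_0^\infty Ce^{-\tilde\lambda\tau}\,d\tau=:K<\infty$ with $K$ independent of $T$.

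Finally, for each multi-index $|\alpha|\le s$ I would apply $\nabla^\alpha$ to the continuity equation, test against $\nabla^\alpha\rho$, and integrate over $\T^3$. The transport term produces $\tfrac12\int_{\T^3}(\nabla\cdot u)|\nabla^\alpha\rho|^2\,dx$ together with the commutator $[\nabla^\alpha,u\cdot\nabla]\rho$, which by the Moser estimate (Lemma \ref{3.2}(ii)) and $H^{s-1}\hookrightarrow L^\infty$ (valid as $s>\tfrac52$) is bounded in $L^2$ by $C(\|\nabla u\|_{L^\infty}+\|\nabla u\|_{H^s})\|\rho\|_{H^s}$; the term $\nabla^\alpha(\rho\,\nabla\cdot u)$ is handled by the same lemma together with $\|\rho\|_{L^\infty}\ls\|\rho\|_{H^s}$, again with a prefactor $\|\nabla u\|_{L^\infty}+\|\nabla u\|_{H^s}$. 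Summing over $\alpha$ gives $\tfrac{d}{dt}\|\rho(t)\|_{H^s}^2\le C g(t)\|\rho(t)\|_{H^s}^2$, so Gr\"onwall's inequality yields $\|\rho(t)\|_{H^s}^2\le\|\rho_0\|_{H^s}^2\exp\lt(C\int_0^t g(\tau)\,d\tau\rt)\le e^{CK}\|\rho_0\|_{H^s}^2$, which is \eqref{est:rho} with $C$ independent of $T$.

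The crux is the second step: converting the $L^2$-decay of $u-m_c$ supplied by Proposition \ref{prop:large-t} into a $T$-independent bound for $\int_0^T\|\nabla u\|_{L^\infty}\,d\tau$. The crude estimate $\|\nabla u\|_{L^\infty}\ls\epsilon_1^{1/2}$ only gives $\int_0^T\ls\epsilon_1^{1/2}T$, which degrades as $T\to\infty$, so the exponential decay is indispensable. There is no circularity, since the characteristic lower bound for $\rho$ uses only this crude estimate and the interpolation uses only the uniform bound on $\|u\|_{H^{s+2}}$ coming from $\mathcal{S}^*(T;s)\le\epsilon_1$; once $g\in L^1(0,\infty)$ with a $T$-uniform norm, the $H^s$ energy estimate itself is routine.
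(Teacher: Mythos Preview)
Your proposal is correct and follows essentially the same route as the paper: invoke Proposition~\ref{prop:large-t} under the smallness hypothesis, combine the resulting $L^2$ decay of $u-m_c$ with the characteristic lower bound on $\rho$ and the interpolation $\|u-m_c\|_{H^{s+1}}\ls\|u-m_c\|_{L^2}^{1/(s+2)}\|u-m_c\|_{H^{s+2}}^{(s+1)/(s+2)}$ to get $\|\nabla u\|_{H^s}\in L^1_t$ uniformly in $T$, and close the $H^s$ energy estimate on the continuity equation by Gr\"onwall. Your treatment is in fact slightly more careful in places (tracking $\epsilon_1^{1/2}$ versus $\epsilon_1$, and explicitly bounding $\|u-m_c\|_{H^{s+2}}$ via $|m_c|$), but the structure is identical.
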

\begin{proof}We first notice that the estimate for the time-asymptotic behavior in Proposition \ref{prop:large-t} does not require any conditions on the lower bounds of $\rho$ and $n+1$. Here we obtain the uniform lower bound for $\rho$ as follows: Let $x(t),t\ge0$ be  a characteristic curve, that is, $x(t)$ is a solution to
$$\frac{d x(t)}{dt} = u(x(t), t), \ \ x(0)=x_0.$$
Then by the characteristic method and the smallness assumption $\|\nabla u\|_{H^{s+1}}\le \epsilon_1$, we have
\begin{equation*}
\begin{split}
\rho(x,t) &= \rho_0(x_0 ) 
\exp{\left(-\int_0^t \nabla\cdot u(x(s),s)) ds\right)} \ge \left( \min_{x\in\T^3} \rho_0(x) \right) \exp{\left( -t\| \nabla u \|_{L^\infty((0,T)\times \T^3)} \right) }
\ge \delta_0 e^{- \epsilon_1 t}.
\end{split}
\end{equation*}
This together with Proposition \ref{prop:large-t}, i.e., $\|\sqrt{\rho} (u-m_c) \|_{L^2} \le C e^{- \lambda t}$, we can deduce that
\[
\|u - m_c\|_{L^2} \le C\delta_0^{-1} e^{-(\lambda-\epsilon_1) t} \quad \mbox{for some} \quad \lambda > 0.
\]
Then we use a standard Sobolev inequality to get
\begin{equation}\label{nab_u}
\|\nabla u\|_{H^s} \leq \|u - m_c\|_{H^{s+1}} \leq C\|u - m_c\|_{L^2}^{1 - \beta}\|u-m_c\|_{H^{s+2}}^\beta \leq Ce^{-\tilde\lambda t},
\end{equation}
where $\beta := \frac{s+1}{s+2}\in(0,1)$ and $\tilde\lambda =( \lambda-\epsilon_1) (1-\beta)$ is a positive constant for sufficiently small $\eps_1>0$. This implies that $\nabla u \in L^1(0,T;H^s(\T^3))$ and, subsequently, we have
\[
\|\rho\|_{H^s}^2 \leq \|\rho_0\|_{H^s}^2\exp\lt(C\int_0^t \|\nabla u(\tau)\|_{H^s} d\tau\rt)\leq \|\rho_0\|_{H^s}^2\exp\lt(C\int_0^t e^{-\tilde\lambda \tau} d\tau\rt) \leq C\|\rho_0\|_{H^s}^2.
\]
Here we used the Gronwall inequality to estimate  $\|\rho\|^2_{H^s}$ as in the proof of Lemma \ref{lem:ext-linear1}.
\end{proof}

\begin{corollary}\label{low-bd-rho} Under the same assumptions as in Lemma \ref{lem:rn}, there exists a constant $\underline \rho > 0$ such that
\[
\rho(x,t) \geq \underline \rho \quad \mbox{for all} \quad (x,t) \in \T^3 \times [0,T],
\]
where $\underline \rho$ is independent of $T$.
\end{corollary}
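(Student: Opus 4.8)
The plan is to read the bound straight off the estimates already assembled in the proof of Lemma \ref{lem:rn}, the point being that an exponentially decaying bound on $\|\nabla u\|_{H^s}$ is integrable in time \emph{uniformly} in $T$. Recall that along a characteristic curve $x(t)$ solving $\dot x(t) = u(x(t),t)$, $x(0) = x_0$, the continuity equation $\eqref{two-hydro-eqns-1}_1$ integrates to
\[
\rho(x(t),t) = \rho_0(x_0)\exp\left(-\int_0^t \nabla\cdot u(x(s),s)\,ds\right).
\]
Since every point of $\T^3$ is the endpoint $x(t)$ of such a curve, it suffices to bound $\int_0^t \|\nabla\cdot u(\cdot,s)\|_{L^\infty}\,ds$ uniformly in $t\in[0,T]$ and in $T$.

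First I would invoke the Sobolev embedding $H^s(\T^3)\hookrightarrow L^\infty(\T^3)$, valid since $s>\tfrac52>\tfrac32$, to obtain $\|\nabla\cdot u(\cdot,s)\|_{L^\infty}\lesssim \|\nabla u(\cdot,s)\|_{H^s}$. Then I would use the decay estimate \eqref{nab_u} established in the proof of Lemma \ref{lem:rn}, namely $\|\nabla u(\cdot,s)\|_{H^s}\le Ce^{-\tilde\lambda s}$ with $\tilde\lambda=(\lambda-\epsilon_1)(1-\beta)>0$ for $\epsilon_1$ small, where $\lambda>0$ is the rate from Proposition \ref{prop:large-t} and $\beta=\frac{s+1}{s+2}$. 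Consequently
\[
\int_0^t \|\nabla\cdot u(\cdot,s)\|_{L^\infty}\,ds \le C\int_0^\infty e^{-\tilde\lambda s}\,ds = \frac{C}{\tilde\lambda} =: K < \infty,
\]
a bound independent of $t$ and of $T$. Substituting into the characteristic formula and using the standing assumption $\min_{x\in\T^3}\rho_0(x)=:\delta_0>0$ from Theorem \ref{thm:main} gives $\rho(x,t)\ge \delta_0 e^{-K}=:\underline\rho>0$ for all $(x,t)\in\T^3\times[0,T]$, with $\underline\rho$ independent of $T$, which is the claim.

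I do not expect any real obstacle here, since the substantive work has already been done in Lemma \ref{lem:rn}; the corollary is essentially the observation that the time integral of an exponentially decaying quantity stays bounded as $T\to\infty$. The only points requiring attention are that the smallness $\mathcal{S}^*(T;s)\le\epsilon_1$ is used twice — once to force $\epsilon_1<\lambda$ so that $\tilde\lambda>0$, and once to legitimize the interpolation underlying \eqref{nab_u} — and that the whole argument is self-consistent in the sense that the crude a priori lower bound $\rho\ge\delta_0 e^{-\epsilon_1 t}$ used inside Lemma \ref{lem:rn} is upgraded here to the genuinely uniform bound $\rho\ge\underline\rho$.
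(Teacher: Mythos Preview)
Your proposal is correct and follows essentially the same approach as the paper: both revisit the characteristic formula for $\rho$ and feed in the exponential decay \eqref{nab_u} from Lemma~\ref{lem:rn} to bound the time integral of $\nabla\cdot u$ uniformly. Your write-up is in fact slightly cleaner, since you integrate $e^{-\tilde\lambda s}$ directly to get the uniform bound $C/\tilde\lambda$, whereas the paper passes through the expression $\epsilon_0\, t\, e^{-\tilde\lambda t}$ and then notes that this is bounded over $t\ge 0$.
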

\begin{proof} 
Revisiting the characteristic method as in Lemma \ref{lem:rn} together with a sharper estimate \eqref{nab_u}, we have
\begin{equation*}
\begin{split}
\rho(x,t) &= \rho_0(x(0) ) 
\exp{\left(-\int_0^t \nabla\cdot u(x(s),s)) ds\right)} \\
&\ge \left( \min_{x\in\T^3} \rho_0(x) \right) \exp{\left( -t\| \nabla u \|_{L^\infty((0,T)\times \T^3)} \right) }\\
&
\ge \delta_0 \exp{( - \epsilon_0 t \exp({-\tilde\lambda t})) } \ge \delta_0 \min_{t\ge0}  \left( \exp{( - \epsilon_0 t \exp({-\tilde\lambda t})) } \right) \ge \underline\rho \ \ \text{ for all } t\ge0.
\end{split}
\end{equation*}
\end{proof}
\begin{remark}\label{rmk:rho} 
1. In order to obtain the uniform boundedness of $\rho$ in $H^s(\T^3)$-norm, we need to show that $u \in H^{s+1+\epsilon_2}(\T^3)$ for any $\epsilon_2 > 0$.

2. One can also find from Lemma \ref{lem:rn} that
\[
\sup_{0 \leq t \leq T}\|\rho(t)\|_{H^k} \leq C\|\rho_0\|_{H^k} \quad \mbox{for} \quad k \in [0,s].
\] 
\end{remark}
We next show the uniform boundedness of the rest of terms in $\mathcal{S}^*(T;s)$ in ascending order with respect to the space-derivative of the solutions. We define
\[
E^0(n,v) := \frac{1}{\gamma-1}\lt( (1+n)^\gamma - 1 - \gamma n\rt) + \frac12(1+n)|v|^2.
\]
\begin{lemma}\label{lem:e} 
For $\gamma > 0$ and $\|n\|_{L^\infty} \leq \epsilon_1 \leq \frac12$,  there exists a positive constant $C>0$ such that
\[
C^{-1}\lt( n^2 + |v|^2\rt) \leq E^0(n,v) \leq C\lt( n^2 + |v|^2\rt),
\]
i.e., it is simply denoted as $
E^0(n,v) \approx n^2 + |v|^2$.
\end{lemma}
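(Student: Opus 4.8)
The plan is to write $E^0$ as the sum of an ``internal energy'' part and a kinetic part,
\[
E^0(n,v) = g(n) + \tfrac12(1+n)|v|^2, \qquad g(n) := \frac{1}{\gamma-1}\bigl((1+n)^\gamma - 1 - \gamma n\bigr),
\]
and to prove the two comparisons $g(n)\approx n^2$ and $\tfrac12(1+n)|v|^2\approx|v|^2$ separately, each with a constant depending only on $\gamma$; adding them gives the stated equivalence. The only structural input is the standing hypothesis $\|n\|_{L^\infty}\le\epsilon_1\le\tfrac12$, which confines $1+n$ (and below also $1+\theta n$) to the fixed interval $[\tfrac12,\tfrac32]$, on which all the continuous functions of $\gamma$ that appear are bounded away from $0$ and $\infty$.

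The kinetic part is immediate: pointwise $\tfrac12\le 1+n\le\tfrac32$, hence $\tfrac14|v|^2\le\tfrac12(1+n)|v|^2\le\tfrac34|v|^2$. For $g$, I would Taylor–expand $(1+n)^\gamma$ to second order about $n=0$ with the Lagrange form of the remainder: for each $x$ there is $\theta=\theta(x)\in(0,1)$ with
\[
(1+n)^\gamma = 1 + \gamma n + \frac{\gamma(\gamma-1)}{2}(1+\theta n)^{\gamma-2}n^2,
\]
so that $g(n) = \tfrac{\gamma}{2}(1+\theta n)^{\gamma-2}n^2$. Since $1+\theta n\in[\tfrac12,\tfrac32]$, the factor $(1+\theta n)^{\gamma-2}$ is bounded above and below by positive constants depending only on $\gamma$, whence $g(n)\approx n^2$ uniformly. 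Equivalently, one checks by an elementary antiderivative computation that $g(n)=f(n+1;1)$, where $f$ is the function introduced in Lemma \ref{press-lem}, and then $g(n)\approx n^2$ is precisely Lemma \ref{press-lem}(1) applied with $r_0=1$ and $\bar r=\tfrac32$.

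Combining the two comparisons yields $E^0(n,v)\approx n^2+|v|^2$ with a constant $C=C(\gamma)$, which is the claim. I do not expect any genuine obstacle here; the one point requiring care is that every comparison constant must be independent of $n$, and this is guaranteed exactly by the bound $\|n\|_{L^\infty}\le\tfrac12$, which keeps the arguments $1+n$ and $1+\theta n$ inside the compact interval $[\tfrac12,\tfrac32]$ where $s\mapsto s^{\gamma-2}$ and $s\mapsto s$ are bounded and bounded below by positive constants.
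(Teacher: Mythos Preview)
Your proof is correct and follows essentially the same approach as the paper: both split off the kinetic part using $\tfrac12\le 1+n\le\tfrac32$ and handle the internal-energy part $g(n)$ by Taylor expansion of $(1+n)^\gamma$. The only cosmetic difference is that the paper expands to third order, obtaining $g(n)=\tfrac{\gamma}{2}n^2+\tfrac{\gamma(\gamma-2)}{6}(1+\xi n)^{\gamma-3}n^3$ and then absorbing the cubic remainder using $|n|\le\epsilon_1$, whereas your second-order Lagrange remainder $g(n)=\tfrac{\gamma}{2}(1+\theta n)^{\gamma-2}n^2$ gives the comparison in one step; your alternative via Lemma~\ref{press-lem} is also valid.
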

\begin{proof}
It follows from Taylor expansion that
\[
\frac{1}{\gamma-1}\lt( (1+n)^\gamma - 1 - \gamma n\rt) = \frac\gamma2 n^2 + \frac{\gamma(\gamma-2)}{6}n^3(1+\xi n)^{\gamma - 3} \quad \mbox{for some } \xi \in (0,1).
\]
This yields that for $\|n\|_{L^\infty} \leq \epsilon_1 \leq \frac12$
\[
\frac{1}{\gamma-1}\lt( (1+n)^\gamma - 1 - \gamma n\rt) \geq \frac{n^2}{2}\lt( \gamma - \frac{|\gamma(\gamma-2)|}{3}2^{|\gamma-3|}n\rt) \geq C_{\epsilon_1,\gamma} \frac{n^2}{2}.
\]
The  upper bound of $E^0(n,v)$ can be obtained similarly. This yields the result.
\end{proof}
Then we now provide two lemmas on the estimates of the zeroth- and first-order derivative of the solutions. Since we already have that $\|\rho(t)\|_{H^s}\le C \| \rho_0\|_{H^s}$ in Lemma \ref{lem:rn}, we shall focus only on the estimates for the unknown functions, $u,n$, and $v$.
\begin{lemma}\label{lem:sl2} 
Let $s>\frac52$ and $T>0$ be given. Suppose that $\mathcal{S}^*(T;s) \leq \epsilon_1$ for sufficiently small $\eps_1>0$. Then we have
\[
\mathcal{S}(T;0) \leq C\mathcal{S}_0(0),
\]
where $C$ is independent of $T$.
\end{lemma}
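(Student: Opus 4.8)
The plan is to derive a differential inequality for the zeroth-order energy $\|u\|_{L^2}^2 + \|n\|_{L^2}^2 + \|v\|_{L^2}^2$ and then integrate using the exponential decay already available from Proposition~\ref{prop:large-t} and Lemma~\ref{lem:rn}. First I would treat the $u$-equation: from $\pa_t u + u\cdot\nabla u = -(u-v)$, multiply by $u$ and integrate over $\T^3$ to get $\frac12\frac{d}{dt}\|u\|_{L^2}^2 = -\int u\cdot\nabla u\cdot u \,dx - \int |u|^2 dx + \int v\cdot u\,dx$; the transport term is controlled by $\|\nabla u\|_{L^\infty}\|u\|_{L^2}^2 \lesssim \epsilon_1 \|u\|_{L^2}^2$ (using $\mathcal{S}^*(T;s)\le\epsilon_1$ and Sobolev embedding $s>5/2$), and the remaining terms are bounded by $\|u\|_{L^2}^2 + \|v\|_{L^2}^2$ via Young's inequality. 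Next I would handle $(n,v)$ using the symmetric-hyperbolic/parabolic structure of $\eqref{two-hydro-eqns-1}_3$--$\eqref{two-hydro-eqns-1}_4$: work with the relative entropy functional $\int_{\T^3} E^0(n,v)\,dx$, which by Lemma~\ref{lem:e} is equivalent to $\|n\|_{L^2}^2 + \|v\|_{L^2}^2$. Differentiating $\int E^0(n,v)\,dx$ in time, the pressure and convective contributions cancel in the standard way (as in Lemma~\ref{lem:energy}, but now around the constant state), leaving the viscous dissipation $-\mu\|\nabla v\|_{L^2}^2 - (\mu+\lambda)\|\nabla\cdot v\|_{L^2}^2$ on the good side, the drag term $\int \rho(u-v)\cdot v\,dx$, and error terms cubic in $(n,v,\nabla v)$ that are absorbed by $\epsilon_1$ times the energy plus a fraction of the dissipation.

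The key remaining step is to control the drag coupling $\int \rho(u-v)\cdot v\,dx$ and $\int v\cdot u\,dx$ so that the combined system closes. Adding the $u$-estimate and the $(n,v)$-estimate, I would form the total functional $\mathcal{Y}(t) := \|u\|_{L^2}^2 + \int_{\T^3} E^0(n,v)\,dx$ and aim for an inequality of the form
\[
\frac{d}{dt}\mathcal{Y}(t) + c\,\|\nabla v\|_{L^2}^2 + c\int_{\T^3}\rho|u-v|^2\,dx \leq C\epsilon_1 \mathcal{Y}(t) + (\text{forcing}),
\]
where the forcing terms are handled by noting that $\int\rho|u-m_c|^2\,dx$ and $|m_c-j_c|^2$ (and hence, after using the lower bound $\rho\ge\underline\rho$ from Corollary~\ref{low-bd-rho} and Proposition~\ref{prop:large-t}, the relevant $L^2$ quantities) decay like $e^{-\lambda t}$. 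Concretely, I would split $u = (u-m_c) + m_c$ and $v = (v-j_c)+j_c$; the fluctuation parts are exponentially small by the Lyapunov estimate, while the mean parts $m_c, j_c$ are handled using $|j_c|^2\le E_0$ (Lemma~\ref{jc-est-lem}), the conservation law $\rho_c(0)m_c + j_c = $ const, and the fact that $m_c - j_c \to 0$ exponentially. This should let me bound the forcing by $C\mathcal{L}_0 e^{-\lambda t}$ (or close the estimate directly by absorbing mean-value contributions into $\mathcal{Y}$ via the Lyapunov functional, since $\mathcal{L}$ already controls $\int n^2$ and $|m_c-j_c|^2$).

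Finally I would apply Gr\"onwall: since $\int_0^T \epsilon_1\,dt$ is unbounded in $T$, I cannot afford a bare $\epsilon_1\mathcal{Y}$ term on the right unless it is genuinely absorbed by dissipation. So the main obstacle — and the step requiring the most care — is to show that the viscous dissipation $\|\nabla v\|_{L^2}^2$ plus the drag dissipation $\int\rho|u-v|^2\,dx$ actually dominate the energy $\mathcal{Y}(t)$ up to exponentially small corrections; this is exactly the content of Lemma~\ref{lem:dissi} and the Bogovskii-augmented argument in Proposition~\ref{prop:large-t}, so I would invoke those to replace the naive inequality by $\frac{d}{dt}\mathcal{Y}(t) + c_0\mathcal{Y}(t) \leq C\mathcal{L}_0 e^{-\lambda t}$, whence $\mathcal{Y}(t) \leq \mathcal{Y}(0)e^{-c_0 t} + C\mathcal{L}_0\le C\mathcal{S}_0(0)$ uniformly in $T$, using $\mathcal{L}_0 \lesssim \mathcal{S}_0(0)$ and $\mathcal{Y}(0)\approx\mathcal{S}_0(0)$. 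Taking the supremum over $[0,T]$ and invoking Lemma~\ref{lem:e} once more to pass back from $E^0(n,v)$ to $\|n\|_{L^2}^2+\|v\|_{L^2}^2$ gives $\mathcal{S}(T;0)\le C\mathcal{S}_0(0)$ with $C$ independent of $T$.
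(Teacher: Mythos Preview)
There is a genuine gap. The target inequality $\frac{d}{dt}\mathcal{Y}(t) + c_0\mathcal{Y}(t) \leq C\mathcal{L}_0 e^{-\lambda t}$ cannot hold for $\mathcal{Y}(t)=\|u\|_{L^2}^2 + \int E^0(n,v)\,dx$: it would force $\mathcal{Y}(t)\to 0$, but $\|u(\cdot,t)\|_{L^2}^2$ and $\|v(\cdot,t)\|_{L^2}^2$ converge to the squared modulus of the (generically nonzero) asymptotic mean velocity, so $\mathcal{Y}$ stays bounded away from zero. Lemma~\ref{lem:dissi} only says the dissipation controls the \emph{fluctuation} functional $\mathcal{L}_p$, not the raw $L^2$ norms that still carry the means $m_c,\,j_c$; hence you cannot extract $c_0\mathcal{Y}$ from $\mathcal{D}$ even ``up to exponentially small corrections.'' Relatedly, with your unweighted $\mathcal{Y}$ the drag terms from the two momentum equations do not combine into $-\int\rho|u-v|^2$ at all: the $u$-equation gives $-\|u\|_{L^2}^2 + \int u\cdot v$ while the $v$-equation gives $\int\rho(u-v)\cdot v$, and these have no reason to produce that dissipation.

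The paper's route is much shorter and invokes neither Proposition~\ref{prop:large-t} nor the Bogovskii argument. The idea you are missing is to weight $|u|^2$ by $\rho$: replacing $\tfrac12\|u\|_{L^2}^2$ by $\tfrac12\int\rho|u|^2\,dx$, the drag contributions combine \emph{exactly} into $-\int\rho|u-v|^2$, and one obtains the identity
\[
\frac{d}{dt}\Big(\int_{\T^3} E^0(n,v)\,dx + \tfrac12\int_{\T^3}\rho|u|^2\,dx\Big) + \mu\|\nabla v\|_{L^2}^2 + (\mu+\lambda)\|\nabla\cdot v\|_{L^2}^2 + \int_{\T^3}\rho|u-v|^2\,dx = 0,
\]
with no cubic error terms whatsoever. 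Integrating and using Lemma~\ref{lem:e} together with $\|\rho_0\|_{L^\infty}\lesssim 1$ yields $\|n(t)\|_{L^2}^2+\|v(t)\|_{L^2}^2+\int_0^t\|\nabla v\|_{L^2}^2\,ds\leq C\mathcal{S}_0(0)$ directly. Only then does one return to the unweighted $\|u\|_{L^2}^2$: from $\partial_t u + u\cdot\nabla u = -(u-v)$ one gets $\tfrac12\frac{d}{dt}\|u\|_{L^2}^2 \leq -(\tfrac12-C\epsilon_1)\|u\|_{L^2}^2 + \tfrac12\|v\|_{L^2}^2$, and since $\|v\|_{L^2}^2\leq C\mathcal{S}_0(0)$ is now a \emph{constant-in-time} forcing, Gronwall closes the bound on $\|u\|_{L^2}^2$.
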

\begin{proof}
Note that
\begin{align*}
\begin{aligned}
\frac{d}{dt}\int_{\T^3}E^0(n,v) dx&= \frac{1}{\gamma-1}\int_{\T^3}\lt( \gamma(1+n)^{\gamma-1}n_t + \gamma n_t\rt)dx + \frac12\int_{\T^3}n_t|v|^2 dx + \int_{\T^3}(1+n)v\cdot v_t \,dx\cr
&=: I_1 + I_2 + I_3,
\end{aligned}
\end{align*}
where $I_i,i=1,2,3$ are estimated as follows.
\begin{align*}
\begin{aligned}
I_1 & = \frac{1}{\gamma-1}\int_{\T^3} \lt( \gamma(1+n)^{\gamma-1} + \gamma\rt)(-\nabla n \cdot v - (1+n)\nabla \cdot v)dx =  \int_{\T^3} \nabla p(1+n) \cdot v \,dx,\cr
I_2 & = \int_{\T^3} (n+1)v \cdot \lt(v \cdot \nabla v\rt) dx, \cr
I_3 & = - \int_{\T^3} (n+1)v \cdot \lt(v \cdot \nabla v\rt) + \nabla p(1+n)\cdot v dx - \mu \int_{\T^3} |\nabla v|^2 dx - (\mu + \lambda)\int_{\T^3} |\nabla \cdot v|^2 dx \cr
&\quad + \int_{\T^3} \rho (u-v)\cdot v \,dx.
\end{aligned}
\end{align*}
Combining the estimates, we obtain
\begin{equation*}
\frac{d}{dt}\int_{\T^3} E^0(n,v) dx + \mu \int_{\T^3} |\nabla v|^2 dx + (\mu + \lambda)\int_{\T^3} |\nabla \cdot v|^2 dx = \int_{\T^3} \rho (u-v)\cdot v \,dx.
\end{equation*}
Furthermore, using the momentum equations for $u$, we have 
$$\frac{d}{dt} \int_{\T^3} \frac12\rho|u|^2 dx = -\int_{\T^3} \rho(u-v)\cdot u dx.$$
Thus, we have
\begin{align*}
\begin{aligned}
\frac{d}{dt}\lt( \int_{\T^3} E^0(n,v) + \frac12 \rho|u|^2 dx\rt) + \mu \int_{\T^3}|\nabla v|^2 dx + (\mu + \lambda)\int_{\T^3} |\nabla \cdot v|^2 dx + \int_{\T^3} \rho |u-v|^2 dx = 0,
\end{aligned}
\end{align*}
and this yields, upon integration over $(0,t)$, that
\begin{align*}
\begin{aligned}
&\int_{\T^3} E^0(n,v) dx + \int_{\T^3} \frac12 \rho|u|^2 dx + \mu \int_0^t\int_{\T^3}|\nabla v|^2 dx ds + (\mu + \lambda)\int_0^t\int_{\T^3} |\nabla \cdot v|^2 dx ds + \int_0^t \int_{\T^3} \rho |u-v|^2 dxds \cr
&\qquad \leq \int_{\T^3} E^0(n_0,v_0) dx + \frac12\int_{\T^3} \rho_0|u_0|^2 dx.
\end{aligned}
\end{align*}
Hence, by Lemma \ref{lem:e},  this implies that 
\[
\sup_{0 \leq t \leq T} \lt(\|n(t)\|_{L^2}^2 + \|v(t)\|_{L^2}^2 + \int_0^t\|\nabla v(s)\|_{L^2}^2ds \rt) \ls \|n_0\|_{L^2}^2 + \|v_0\|_{L^2}^2 + \|u_0\|_{L^2}^2.
\]
For the estimate of $\|u\|_{L^\infty(0,T;L^2)}$, one can easily obtain
\begin{align*}
\begin{aligned}
\frac12\frac{d}{dt}\|u\|_{L^2}^2 &\leq -\lt( \frac12 - C\epsilon_1\rt)\|u\|_{L^2}^2 + \frac12\|v\|_{L^2}^2 \leq -\lt( \frac12 - C\epsilon_1\rt)\|u\|_{L^2}^2 + C\lt(\|n_0\|_{L^2}^2 + \|v_0\|_{L^2}^2 + \|u_0\|_{L^2}^2\rt),
\end{aligned}
\end{align*}
this yields, by Gronwall's inequality, that
\[
\|u\|_{L^2}^2 \le C \lt(\|n_0\|_{L^2}^2 + \|v_0\|_{L^2}^2 + \|u_0\|_{L^2}^2\rt).
\]
Thus we arrive at
\[
\sup_{0 \leq t \leq T} \lt(\|u(t)\|_{L^2}^2 + \|n(t)\|_{L^2}^2 + \|v(t)\|_{L^2}^2 + \int_0^t\|\nabla v(s)\|_{L^2}^2ds \rt) \ls \|n_0\|_{L^2}^2 + \|v_0\|_{L^2}^2 + \|u_0\|_{L^2}^2.
\]
Combining this with  Remark \ref{rmk:rho}.2, we have
\[
\mathcal{S}(T;0) \leq C\mathcal{S}_0(0).
\]
\end{proof}
\begin{lemma}\label{est-uvnh1}
Let $s>\frac52$ and $T>0$ be given. Suppose $\mathcal{S}^*(T;s) \leq \epsilon_1 \ll 1$. Then we have
\[
\mathcal{S}(T;1) \leq C\mathcal{S}_0(1).
\]
\end{lemma}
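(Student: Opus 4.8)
The plan is to run a first--order (in $x$) energy estimate for $(u,n,v)$, exploiting that the $(n,v)$--block of \eqref{local-sys} is symmetric hyperbolic with symmetrizer $A^0(w)$ and carries the diffusion $Lv$, while the drag term $-\rho(u-v)$ provides relative damping for $u$. The crucial preliminary point is that, under $\mathcal{S}^*(T;s)\le\epsilon_1$, all the nonlinear coefficients that will multiply the first--order energy are not merely small but exponentially decaying. Indeed, combining Proposition \ref{prop:large-t} with Corollary \ref{low-bd-rho} and $n+1\ge\tfrac12$ gives $\|u-m_c\|_{L^2}^2+\|v-j_c\|_{L^2}^2+\|n\|_{L^2}^2\lesssim \mathcal{L}_0 e^{-\lambda t}$, hence also $\|u-v\|_{L^2}^2\lesssim \mathcal{L}_0 e^{-\lambda t}$ since $u-v=(u-m_c)+(m_c-j_c)+(j_c-v)$ and $|m_c-j_c|^2\le\mathcal{L}(t)$. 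Interpolating these with the uniform bounds $\|u\|_{H^{s+2}}^2+\|n\|_{H^{s+1}}^2+\|v\|_{H^{s+1}}^2\le\epsilon_1$ and using $H^{s-1}\hookrightarrow L^\infty$ (valid since $s>\tfrac52$) yields, for some $c>0$,
\[
\|\nabla u(t)\|_{L^\infty}+\|\nabla n(t)\|_{L^\infty}+\|\nabla v(t)\|_{L^\infty}\lesssim e^{-ct},\qquad \mathcal{L}_0\lesssim \mathcal{S}_0(1);
\]
the first summand is precisely \eqref{nab_u}.

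\textbf{First--order energy estimates.} Next I apply $\nabla$ to the equations. From $\pa_t u+u\cdot\nabla u=-(u-v)$, testing against $\nabla u$ and integrating by parts, the transport and quadratic terms are $\lesssim \|\nabla u\|_{L^\infty}\|\nabla u\|_{L^2}^2$, so
$\tfrac12\tfrac{d}{dt}\|\nabla u\|_{L^2}^2\le -\tfrac12\|\nabla u\|_{L^2}^2+\tfrac12\|\nabla v\|_{L^2}^2+C\|\nabla u\|_{L^\infty}\|\nabla u\|_{L^2}^2$, i.e. the relative damping gives a genuine $-\tfrac12\|\nabla u\|_{L^2}^2$. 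For $(n,v)$, I differentiate the symmetrized system and test against $\nabla w=(\nabla n,\nabla v)^T$; by symmetry of $A^j(w)$ the hyperbolic flux integrates by parts to lower--order terms of size $\|\nabla w\|_{L^\infty}\|\nabla w\|_{L^2}^2$ — in particular the would--be $O(1)$ pressure--velocity coupling $\int (1+n)^{\gamma-1}\nabla\cdot(\nabla n\,\nabla v)\,dx$ becomes lower order, so there is no intrinsic dissipation of $\|\nabla n\|_{L^2}$, and, as will be seen, none is needed. The $E_1$ term produces the dissipation $\mu\|\nabla^2 v\|_{L^2}^2+(\mu+\lambda)\|\nabla(\nabla\cdot v)\|_{L^2}^2\ge c_L\|\nabla^2 v\|_{L^2}^2$ by ellipticity of $L$ (using $\lambda+2\mu>0$), and since $\int_{\T^3}\nabla v\,dx=0$, Poincar\'e upgrades this to control of $\|\nabla v\|_{L^2}^2$; the $E_2$ (drag) term contributes $\int\rho\,\nabla v\cdot\nabla u\,dx-\int\rho|\nabla v|^2\,dx$ plus lower order, the second term being an additional damping. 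Commutators are handled by Moser's inequalities (Lemma \ref{3.2}); the only mildly delicate one is $[\nabla,A^0]\pa_t w$, where $\pa_t v$ is replaced via the equation and its $\sim\nabla^2 v$ part absorbed through $\epsilon\|\nabla^2 v\|_{L^2}^2+C\|\nabla n\|_{L^\infty}^2\|\nabla v\|_{L^2}^2$.

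\textbf{Assembling the estimate and the main obstacle.} Setting $\mathcal{E}_1(t):=\|\nabla u\|_{L^2}^2+\|\nabla n\|_{L^2}^2+\int_{\T^3}(1+n)|\nabla v|^2\,dx$, which is equivalent to $\|\nabla u\|_{L^2}^2+\|\nabla n\|_{L^2}^2+\|\nabla v\|_{L^2}^2$ since $n+1\in[\tfrac12,\tfrac32]$, the above combine into
\[
\frac{d}{dt}\mathcal{E}_1(t)+c\bigl(\|\nabla u\|_{L^2}^2+\|\nabla v\|_{L^2}^2+\|\nabla^2 v\|_{L^2}^2\bigr)\le h(t)\,\mathcal{E}_1(t)+g(t),
\]
where $h(t)\lesssim e^{-ct}$ is built from the $L^\infty$--norms above (so in particular every term multiplying the \emph{undamped} $\|\nabla n\|_{L^2}^2$ is time--integrable), and $g(t)\lesssim \mathcal{L}_0 e^{-ct}$ collects, after Young's inequality, the contributions involving $\|u-v\|_{L^2}^2$, $\|\nabla n\|_{L^\infty}\|n\|_{L^2}$, $\|\nabla\rho\|_{L^\infty}\|u-v\|_{L^2}$, etc. Grönwall's inequality then gives $\mathcal{E}_1(t)\le\bigl(\mathcal{E}_1(0)+\int_0^\infty g\bigr)\exp\bigl(\int_0^\infty h\bigr)\le C\mathcal{S}_0(1)$ uniformly in $T$, and combining with Lemma \ref{lem:sl2} yields $\mathcal{S}(T;1)\le C\mathcal{S}_0(1)$. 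The main obstacle is organizing the error terms so that everything not absorbed by the dissipation carries an \emph{exponentially decaying} (hence time--integrable) coefficient rather than merely an $O(\epsilon_1)$ one; this is exactly what forces the use of Proposition \ref{prop:large-t} and Corollary \ref{low-bd-rho}, and it is why the uniform lower bound $\rho\ge\underline\rho$ must be established before the $H^1$ estimate can be closed.
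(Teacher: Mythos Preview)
Your argument is correct, but it follows a genuinely different route from the paper's. The paper never appeals to Proposition \ref{prop:large-t} (or to Corollary \ref{low-bd-rho}) in the proof of this lemma; instead it manufactures \emph{actual dissipation} for $\|\nabla n\|_{L^2}$ via the cross--term functional
\[
\int_{\T^3}\nabla n\cdot\Bigl(\frac{\nabla n}{2}+\frac{(n+1)^2}{2\mu+\lambda}\,v\Bigr)\,dx,
\]
whose time derivative produces, from the pressure term in the momentum equation, a good term $-\frac{\gamma}{2\mu+\lambda}\int(1+n)^\gamma|\nabla n|^2\,dx$. This yields the time--integrated bound $\int_0^T\|\nabla n\|_{L^2}^2\,dt\le C\mathcal{S}_0(1)$, which the paper then feeds into a direct $H^1$ estimate for $v$ (where $\|\nabla n\|_{L^2}^2$ appears on the right as an $O(1)$ forcing, not merely an $O(\epsilon_1)$ or $O(e^{-ct})$ one). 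The only external inputs are the time--integrated quantities $\int_0^T\|\nabla v\|_{L^2}^2\,dt$ and $\int_0^T\|\sqrt{\rho}(u-v)\|_{L^2}^2\,dt$ from Lemma \ref{lem:sl2}.

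Your approach trades this structural mechanism for the large--time decay: you keep the symmetric--hyperbolic energy for $(n,v)$ as is (no cross term, no dissipation of $\nabla n$), and instead arrange that every coefficient multiplying the undamped $\|\nabla n\|_{L^2}^2$ in the Gr\"onwall inequality is of size $\|\nabla u\|_{L^\infty}+\|\nabla n\|_{L^\infty}+\|\nabla v\|_{L^\infty}\lesssim e^{-ct}$, hence integrable. This is a legitimate closure and is arguably cleaner for this single lemma. What the paper's route buys, however, is the integrated dissipation $\int_0^T\|\nabla n\|_{L^2}^2\,dt$ and, more importantly, a template: exactly the same cross--term device is the engine of Lemma \ref{lem:n} at every order $2\le k\le s+1$, and of the inductive assembly in Proposition \ref{prop:appri}. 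Your method would in principle extend to higher orders as well, but the paper's approach packages the dissipation so that the subsequent bootstrap is a clean linear--combination argument rather than a repeated appeal to decay.
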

\begin{proof}
First we easily find that
\begin{align*}
\begin{aligned}
\sup_{0 \leq t \leq T}\lt( \|\nabla u(t)\|_{L^2}^2 + C(1 - C\epsilon_1) \int_0^t \|\nabla u(s)\|_{L^2}^2 ds \rt)&\leq C\|\nabla u_0\|_{L^2}^2 + C\int_0^t\|\nabla v(s)\|_{L^2}^2 ds \cr
&\leq C\|\nabla u_0\|_{L^2}^2 + C\mathcal{S}_0(0)\cr
&\leq C\mathcal{S}_0(1).
\end{aligned}
\end{align*}
For the estimate of $\|\nabla n\|_{L^2}$, we consider
\[
\frac{d}{dt}\int_{\T^3} \nabla n \cdot \lt( \frac{\nabla n}{2} + \frac{(n+1)^2}{2\mu + \lambda}v \rt) dx.
\]
Here, for notational simplicity, we omit the summation, i.e., $f_i \,g_i := \sum_{i=1}^3 f_i \,g_i$. Then we get
\begin{align*}
\begin{aligned}
&\frac{d}{dt}\int_{\T^3} \pa_i n \lt( \frac{\pa_i n}{2} + \frac{(n+1)^2}{2\mu + \lambda} v_i\rt) dx \cr
& \,\, = \int_{\T^3} \pa_i n_t \lt( \pa_i n + \frac{(n+1)^2}{2\mu + \lambda}v_i\rt) dx + \int_{\T^3} \pa_i n \lt( \frac{(n+1)^2}{2\mu + \lambda} \pa_t v_i\rt) dx + \int_{\T^3} \pa_i n \lt( \frac{2(n+1)}{2\mu + \lambda}v_i n_t\rt) dx\cr
& \,\, =: J_1 + J_2 + J_3.
\end{aligned}
\end{align*}
First, using the momentum equations for $v$, we estimate $J_2$ as
\begin{align*}
\begin{aligned}
J_2 &= -\frac{1}{2\mu + \lambda}\int_{\T^3} (n+1)^2 (\pa_i n) v_j \pa_j v_i \,dx - \frac{\gamma}{2\mu +\lambda} \int_{\T^3} (n+1)^\gamma |\pa_i n|^2 dx  + \frac{\mu}{2\mu + \lambda}\int_{\T^3} (n+1)\pa_i n \pa_j^2v_i \,dx \cr
&\quad + \frac{\mu + \lambda}{2\mu + \lambda}\int_{\T^3} (n+1)\pa_i n \pa_{ij} v_j \,dx + \frac{1}{2\mu + \lambda}\int_{\T^3} (n+1)(\pa_i n) \rho(u_i - v_i) dx.
\end{aligned}
\end{align*}
Note that
\begin{align*}
\begin{aligned}
&\frac{\mu}{2\mu + \lambda}\int_{\T^3} (n+1)\pa_i n \pa_j^2v_i \,dx + \frac{\mu + \lambda}{2\mu + \lambda}\int_{\T^3} (n+1)\pa_i n \, \pa_{ij} v_j \,dx\cr
& \quad = \frac{1}{2\mu + \lambda}\int_{\T^3} -\mu (\pa_j n \,\pa_i n + (n+1)\pa_{ij} n)\pa_j v_i - (\mu + \lambda)(\pa_j n \,\pa_i n + (n+1)\pa_{ij}n)\pa_i v_j \,dx\cr
&\quad = - \int_{\T^3} \pa_j n\, \pa_i n \,\pa_j v_i \,dx - \int_{\T^3} (n+1)\pa_{ij}n \,\pa_j v_i \,dx \cr
&\quad = \int_{\T^3} (n+1)\pa_j n \,\pa_{ij}v_i \,dx.
\end{aligned}
\end{align*}
Using this, we have
\begin{align*}
\begin{aligned}
J_2 &= -\frac{1}{2\mu + \lambda}\int_{\T^3} (n+1)^2 (\pa_i n) v_j \pa_j v_i \,dx - \frac{\gamma}{2\mu +\lambda} \int_{\T^3} (n+1)^\gamma |\pa_i n|^2 dx\cr
&\quad + \int_{\T^3} (n+1)\pa_j n \pa_{ij}v_i \,dx +  \frac{1}{2\mu + \lambda}\int_{\T^3} (n+1)(\pa_i n) \rho(u_i - v_i) dx.
\end{aligned}
\end{align*}
Next we split $J_1$ into two parts:
\[
J_1^1 := \int_{\T^3} \pa_i n_t \pa_i n \,dx \quad \mbox{and} \quad J_1^2 := \int_{\T^3} \pa_i n_t \frac{(n+1)^2}{2\mu + \lambda} v_i \,dx.
\]
Then we find
\begin{align*}
\begin{aligned}
J_1^1 &= -\frac12\int_{\T^3} |\pa_i n|^2 \pa_j v_j \,dx - \int_{\T^3} \pa_i n \pa_j n \pa_i v_j \,dx - \int_{\T^3} (n+1)\pa_i n \pa_{ij}v_j \,dx,
\end{aligned}
\end{align*}
and
\begin{align*}
\begin{aligned}
J_1^2 &= - \frac{1}{2\mu + \lambda}\int_{\T^3} (n+1)^2 v_i \lt( \pa_{ij}n v_j + \pa_j n \pa_i v_j + \pa_i n \pa_j v_j + (n+1)\pa_{ij}v_j \rt) dx\cr
&= \frac{1}{2\mu + \lambda} \int_{\T^3} \lt( 2(n+1)\pa_i n v_i v_j + (n+1)^2 \pa_i v_i v_j + (n+1)^2 v_i \pa_i v_j \rt) \pa_j n \,dx\cr
&\quad - \frac{1}{2\mu + \lambda} \int_{\T^3} (n+1)^2 v_i \pa_j n \pa_i v_j \,dx - \frac{1}{2\mu + \lambda} \int_{\T^3} (n+1)^2 v_i \pa_i n \pa_j v_j \,dx\cr
&\quad + \frac{1}{2\mu + \lambda} \int_{\T^3} \lt( 3(n+1)^2 \pa_i n v_i + (n+1)^3 \pa_i v_i \rt) \pa_j v_j \,dx\cr
&= \frac{1}{2\mu + \lambda} \int_{\T^3} 2(n+1)\pa_i n \pa_j n  v_i v_j \,dx + \frac{1}{2\mu + \lambda} \int_{\T^3} \lt( 3(n+1)^2 \pa_i n v_i + (n+1)^3 \pa_i v_i \rt) \pa_j v_j \,dx.
\end{aligned}
\end{align*}
Thus we obtain
\begin{align*}
\begin{aligned}
J_1^1 + J_2 &= -\frac12\int_{\T^3} |\pa_i n|^2 \pa_j v_j \,dx - \int_{\T^3} \pa_i n \pa_j n \pa_i v_j \,dx - \frac{1}{2\mu + \lambda} \int_{\T^3}(n+1)^2 \pa_i n v_j \pa_j v_i \,dx\cr
&\quad - \frac{\gamma}{2\mu + \lambda}\int_{\T^3}(n+1)^\gamma |\pa_i n|^2 \,dx + \frac{1}{2\mu + \lambda} \int_{\T^3} (n+1)\pa_i n \rho (u_i - v_i)dx,\cr
J_1^2 + J_3 &= \frac{1}{2\mu + \lambda} \int_{\T^3} (n+1)^2 \pa_i n v_i \pa_j v_j \,dx + \frac{1}{2\mu + \lambda}\int_{\T^3} (n+1)^3 \pa_i v_i \pa_j v_j \,dx.
\end{aligned}
\end{align*}
Now we combine all estimates above to have
\begin{align*}
\begin{aligned}
&\frac{d}{dt}\int_{\T^3} \nabla n \cdot \lt( \frac{\nabla n}{2} + \frac{(n+1)^2 }{2\mu + \lambda} v\rt) dx  \cr
&\quad =  - \frac12 \int_{\T^3} |\nabla n|^2 \nabla \cdot v \,dx - \int_{\T^3} \nabla n \cdot (\nabla v \cdot \nabla n) dx - \frac{1}{2\mu + \lambda} \int_{\T^3}(n+1)^2 \nabla n \cdot (v \cdot \nabla v) dx \cr
&\qquad - \frac{\gamma}{2\mu + \lambda}\int_{\T^3} (n+1)^\gamma|\nabla n|^2 dx + \frac{1}{2\mu + \lambda}\int_{\T^3} (n+1)\nabla n \cdot \rho (u-v) dx \cr
&\qquad + \frac{1}{2\mu + \lambda}\int_{\T^3} (n+1)^2 (\nabla n \cdot v) (\nabla \cdot v)dx + \frac{1}{2\mu + \lambda}\int_{\T^3} (n+1)^3 |\nabla \cdot v|^2 dx\cr
&\leq C\epsilon_1 \|\nabla n\|_{L^2}^2 + C\epsilon_1\|\nabla n\|_{L^2}\|\nabla v\|_{L^2} - \frac{\gamma}{2\mu + \lambda}\|\nabla n\|_{L^2}^2 + C\sqrt{\epsilon_1}\|\nabla n\|_{L^2}\|\sqrt{\rho}|u - v|\|_{L^2}+ C\|\nabla v\|_{L^2}^2\cr
&\leq -C(1 - C\sqrt{\epsilon_1})\|\nabla n\|_{L^2}^2 + C\sqrt{\epsilon_1}\|\sqrt{\rho}|u - v|\|_{L^2}^2 + C\|\nabla v\|_{L^2}^2.
\end{aligned}
\end{align*}
Here we have used $S^*(T;s)\le\eps_1$. By integrating it over $(0,t)$ with respect to $t$, we obtain
\begin{align*}
\begin{aligned}
&\frac12\int_{\T^3} |\nabla n|^2 dx + \frac{1}{2\mu + \lambda}\int_{\T^3} (n+1)^2 \nabla n \cdot v \,dx + C(1 - C\sqrt{\epsilon_1})\int_0^t\|\nabla n\|_{L^2}^2 ds  \cr
& \quad \leq \frac12\|\nabla n_0\|_{L^2}^2 + \frac{1}{2\mu + \lambda}\|(n_0+1)\sqrt{|\nabla n_0||v_0|}\|_{L^2}^2 + C\sqrt{\epsilon_1}\int_0^t \|\sqrt{\rho}|u-v|\|_{L^2}^2 ds+ C\int_0^t \|\nabla v\|_{L^2}^2 ds\cr
&\quad \leq C\|\nabla u_0\|_{L^2}^2 + C\mathcal{S}_0(0)\leq C\mathcal{S}_0(1),
\end{aligned}
\end{align*}
where the $L^2$ estimate has been used. We also notice that
\[
\lt|\frac{1}{2\mu + \lambda}\int_{\T^3} (n+1)^2 \nabla n \cdot v \,dx \rt|\leq \frac14\int_{\T^3} |\nabla n|^2 dx + C\int_{\T^3} |v|^2 dx \leq \frac14\int_{\T^3} |\nabla n|^2 dx + C\mathcal{S}_0(0),
\]
and this implies
\[
\sup_{0 \leq t \leq T}\lt( \|\nabla n(t)\|_{L^2}^2 + C_0\int_0^t\|\nabla n(s)\|_{L^2}^2 ds \rt)\leq C\mathcal{S}_0(1).
\]
So far, we have shown that
\[
\sup_{0 \leq t \leq T} \lt( \|\nabla u(t)\|_{L^2}^2 + \|\nabla n(t)\|_{L^2}^2 + C_0\lt( \int_0^t \|\nabla u(s)\|_{L^2}^2 + \|\nabla n(s)\|_{L^2}^2 ds \rt)\rt) \leq C\mathcal{S}_0(1),
\]
where $C_0$ and $C$ are positive constants independent of $t$. 

We finally estimate $v$ in the $H^1$-norm. Note that $v$ satisfies 
\[
v_t + v \cdot \nabla v + \frac{\nabla p(n+1)}{n+1} + \frac{Lv}{n+1} = \frac{\rho}{n+1}(u-v).
\]
A straightforward computation yields that
\begin{align*}
\begin{aligned}
\frac12\frac{d}{dt}\int_{\T^3} |\nabla v|^2 dx &= -\int_{\T^3} \nabla v \cdot \lt(\nabla (v \cdot \nabla v) + \nabla \lt( \frac{\nabla p}{n+1}\rt) + \nabla \lt(\frac{Lv}{n+1} \rt) \rt) dx+  \int_{\T^3} \nabla v \cdot \nabla \lt(\frac{\rho}{n+1}(u-v)\rt) dx\cr
&=:\sum_{j=1}^4 K_j.
\end{aligned}
\end{align*}
Here $K_j,j=1,\dots,4$ are estimated as follows.
\begin{align*}
\begin{aligned}
K_1 &= -\int_{\T^3} \nabla v \cdot \lt( \nabla v \cdot \nabla v + v \cdot \nabla^2 v\rt) dx \le C \|\nabla v\|_{L^\infty} \|\nabla v\|_{L^2}^2 \leq C\epsilon_1\|\nabla v\|_{L^2}^2,
\end{aligned}
\end{align*}
\begin{align*}
\begin{aligned}
K_2 &= \int_{\T^3} \nabla^2 v \cdot \lt( \frac{\nabla p}{n+1}\rt) dx \leq \frac\mu4 \int_{\T^3} \frac{|\nabla^2 v|^2}{n+1}dx + \frac1\mu\int_{\T^3} \frac{|\nabla p|^2}{n+1}dx \le \frac\mu4 \int_{\T^3} \frac{|\nabla^2 v|^2}{n+1}dx + \frac{C\gamma}{\mu}\|\nabla n\|_{L^2}^2,
\end{aligned}
\end{align*}
\begin{align*}
\begin{aligned}
K_3 &= \int_{\T^3} \nabla v \cdot \nabla \lt( \frac{\mu}{n+1} \Delta v + \frac{\mu + \lambda}{n+1}\nabla \nabla \cdot v \rt)dx \leq -\mu\int_{\T^3} \frac{|\nabla^2 v|^2}{n+1} dx + C\epsilon_1\|\nabla^2 v\|_{L^2}^2 + C\epsilon_1\|\nabla v\|_{L^2}^2,
\end{aligned}
\end{align*}
and
\begin{align*}
\begin{aligned}
K_4 &= \int_{\T^3} \Delta v  \cdot \lt(\frac{\rho}{n+1}(u-v)\rt) dx \leq C\sqrt{\epsilon_1}\|\nabla^2 v\|_{L^2}\|\sqrt{\rho}|u-v|\|_{L^2}\leq C\epsilon_1\|\nabla^2 v\|_{L^2}^2 + C\|\sqrt{\rho}|u-v|\|_{L^2}^2.\cr
\end{aligned}
\end{align*}
Thus, combining all the estimates, we obtain
\bq\label{est-vh1}
\frac{d}{dt}\int_{\T^3} |\nabla v|^2 dx + C(\mu - C\sqrt{\epsilon_1})\int_{\T^3} |\nabla^2 v|^2dx \leq  \frac{C\gamma}{\mu}\|\nabla n\|_{L^2}^2 + C\|\sqrt{\rho}(u-v)\|_{L^2}^2 + C\epsilon_1\|\nabla v\|_{L^2}^2,
\eq
and by integrating it over $(0,t)$  with respect to $t$, we arrive at
\begin{align*}
\begin{aligned}
\|\nabla v\|_{L^2}^2 + C(\mu - C\epsilon_1)\int_0^t\|\nabla^2 v\|_{L^2}^2 ds
&\leq \|\nabla v_0\|_{L^2}^2 + C\int_0^t \|\nabla v\|_{L^2}^2 + \|\nabla n\|_{L^2}^2 + \|\sqrt{\rho}|u-v|\|_{L^2}^2 ds \leq C\mathcal{S}_0(1).
\end{aligned}
\end{align*}
Thus we have
\[
\sup_{0 \leq t \leq T}\lt( \|\nabla v(t)\|_{L^2}^2 + \int_0^t \|\nabla^2 v(s)\|_{L^2}^2 ds \rt) \leq C\mathcal{S}_0(1),
\]
where $C$ is independent of $t$. Hence by combining Lemmas \ref{lem:rn} and \ref{lem:sl2} we conclude that
\[
\mathcal{S}(T;1) \leq C\mathcal{S}_0(1).
\]
\end{proof}
\begin{remark}(Refined estimates for $v$) Notice that
\begin{align*}
\begin{aligned}
\frac12\frac{d}{dt}\|v\|_{L^2}^2 &= -\int_{\T^3} v \cdot \lt( v \cdot \nabla v + \frac{\nabla p(n+1)}{n+1} + \frac{Lv}{n+1}\rt) dx + \int_{\T^3} \frac{\rho}{n+1}(u-v) \cdot v dx\cr
&=: \sum_{i=1}^4 I_i,
\end{aligned}
\end{align*}
where $I_i,i=1,\cdots, 4$ are estimated as follows.
\begin{align*}
\begin{aligned}
I_1 &\leq C\epsilon_1 \|v\|_{L^2}^2 \leq C\mathcal{S}_0(0),\cr
I_2 &\leq C\|v\|_{L^2}\|\nabla p\|_{L^2} \leq C\|v\|_{L^2}\|\nabla n\|_{L^2} \leq C\mathcal{S}_0(1),
\end{aligned}
\end{align*}
\begin{align*}
\begin{aligned}
I_3 &= -\int_{\T^3} \mu \nabla \lt( \frac{v}{n+1}\rt) \cdot \nabla v - (\mu + \lambda)\nabla \cdot \lt( \frac{v}{n+1}\rt) \nabla \cdot v \,dx\cr
&= -\int_{\T^3} \frac{1}{n+1}\lt(\mu |\nabla v|^2 + (\mu + \lambda)|\nabla \cdot v|^2\rt) dx+ \int_{\T^3} \frac{1}{(n+1)^2}\lt( \mu v \nabla n \cdot \nabla v + (\mu + \lambda) v \cdot \nabla n \nabla \cdot v \rt)dx\cr
&\leq -C\mu\int_{\T^3} |\nabla v|^2 dx + C\epsilon_1\|\nabla n\|_{L^2}^2 \leq C\mathcal{S}_0(1),
\end{aligned}
\end{align*}
and 
\begin{align*}
\begin{aligned}
I_4 &\leq \frac12\int_{\T^3} \frac{\rho}{n+1}|u|^2 dx - \frac12\int_{\T^3} \frac{\rho}{n+1}|v|^2 dx \leq  C\mathcal{S}_0(0).
\end{aligned}
\end{align*}
Thus we find that
\[
\frac{d}{dt}\|v\|_{L^2}^2 + \|v\|_{L^2}^2 + C\mu\|\nabla v\|_{L^2}^2 \leq C\mathcal{S}_0(1).
\]
We can also deduce from \eqref{est-vh1} that
\bq\label{est-refine-vh1}
\frac{d}{dt}\|\nabla v\|_{L^2}^2 + C(\mu - C\epsilon_1)\|\nabla^2 v\|_{L^2}^2 \leq C\mathcal{S}_0(1).
\eq
\end{remark}
We now provide the high-order estimates of $u,v$ and $n$. The proofs of these are lengthy and technical, thus we postpone them to Appendix \ref{app:high}.
\begin{lemma}\label{lem:u} Let $s>\frac52$. Suppose $S^*(T;s) \leq \epsilon_1 \ll 1$. Then, for any $2 \leq k \leq s+2$, we obtain
\[
\frac{d}{dt}\|\nabla^k u\|_{L^2}^2 + C_1\|\nabla^k u\|_{L^2}^2 \leq  \|\nabla^k v\|_{L^2}^2,
\]
where $C_1 : = C(1 - \epsilon_1)$ is a positive constant independent of $t$.
\end{lemma}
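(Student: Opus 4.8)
The plan is first to isolate a closed evolution equation for $u$. Expanding $\eqref{two-hydro-eqns-1}_2$ and subtracting $u$ times the continuity equation $\eqref{two-hydro-eqns-1}_1$ gives $\rho(\pa_t u + u\cdot\nabla u) = -\rho(u-v)$; since the classical solution keeps $\rho>0$ on $[0,T]$ (Corollary \ref{low-bd-rho}), we may divide by $\rho$ to obtain
\[
\pa_t u + u\cdot\nabla u + u = v .
\]
The key structural point I want to exploit is the ``relative damping'' term $+u$: after differentiating $k$ times and pairing with $\nabla^k u$ in $L^2$, it produces the favorable term $\|\nabla^k u\|_{L^2}^2$ on the left, while all the other contributions will carry a factor of (a power of) $\epsilon_1$ and hence be absorbable.

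Concretely, for a multi-index $\alpha$ with $|\alpha|=k$, $2\le k\le s+2$, I would apply $\nabla^\alpha$ to the above equation and take the $L^2(\T^3)$ inner product with $\nabla^\alpha u$, yielding
\[
\frac12\frac{d}{dt}\|\nabla^\alpha u\|_{L^2}^2 + \int_{\T^3}(u\cdot\nabla\nabla^\alpha u)\cdot\nabla^\alpha u\,dx + \|\nabla^\alpha u\|_{L^2}^2 = \int_{\T^3}\nabla^\alpha v\cdot\nabla^\alpha u\,dx - \int_{\T^3}\big(\nabla^\alpha(u\cdot\nabla u) - u\cdot\nabla\nabla^\alpha u\big)\cdot\nabla^\alpha u\,dx .
\]
The transport term integrates by parts to $-\tfrac12\int_{\T^3}(\nabla\cdot u)|\nabla^\alpha u|^2\,dx$, bounded by $C\|\nabla\cdot u\|_{L^\infty}\|\nabla^\alpha u\|_{L^2}^2\le C\epsilon_1\|\nabla^\alpha u\|_{L^2}^2$ using $H^s\hookrightarrow L^\infty$ ($s>\tfrac32$) and $\mathcal{S}^*(T;s)\le\epsilon_1$. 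Writing $u\cdot\nabla u=u_j\pa_j u$ and invoking the second Moser estimate of Lemma \ref{3.2}(ii) (with $f=u_j$, $g=\pa_j u$), the commutator obeys $\|\nabla^\alpha(u\cdot\nabla u)-u\cdot\nabla\nabla^\alpha u\|_{L^2}\lesssim \|\nabla u\|_{L^\infty}\|\nabla^k u\|_{L^2}\le C\epsilon_1\|\nabla^k u\|_{L^2}$, so it contributes $\le C\epsilon_1\|\nabla^k u\|_{L^2}^2$. The coupling term is handled by Young's inequality, $\int_{\T^3}\nabla^\alpha v\cdot\nabla^\alpha u\,dx\le\tfrac12\|\nabla^k u\|_{L^2}^2+\tfrac12\|\nabla^k v\|_{L^2}^2$. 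Collecting these and multiplying by $2$ gives
\[
\frac{d}{dt}\|\nabla^k u\|_{L^2}^2 + (1-C\epsilon_1)\|\nabla^k u\|_{L^2}^2 \le \|\nabla^k v\|_{L^2}^2 ,
\]
i.e.\ the claim with $C_1=C(1-\epsilon_1)>0$ for $\epsilon_1$ small.

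I do not expect a genuine obstacle here; the only delicate points are bookkeeping ones. First, one must check the commutator estimate still closes at the top order $k=s+2$: this is fine because $\|\pa_j u\|_{L^\infty}\lesssim\|u\|_{H^{s+1}}$ and $\nabla^{s+1}\pa_j u=\nabla^{s+2}u\in L^2$ since $u\in H^{s+2}$. Second, since $\nabla^{s+2}\pa_t u$ need not be $L^2$ for the limit solution, the differential identity at $k=s+2$ should first be derived for the approximating sequence (or via difference quotients) where this regularity is available, and then passed to the limit, so that the inequality holds for a.e.\ $t\in[0,T]$ — which is all that is needed for the later time integration. Apart from that, the proof is the routine damped-transport energy estimate sketched above, with every error term tagged by a power of $\epsilon_1$ and absorbed by the damping.
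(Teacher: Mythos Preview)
Your proposal is correct and follows essentially the same route as the paper: divide the momentum equation by $\rho>0$ to get $u_t+u\cdot\nabla u=-(u-v)$, differentiate $k$ times, pair with $\nabla^k u$, integrate the transport term by parts, bound the commutator $[\nabla^k,u\cdot\nabla]u$ by the Moser/Kato--Ponce estimate, and use Young's inequality on the $\nabla^k v\cdot\nabla^k u$ term so that the built-in damping $\|\nabla^k u\|_{L^2}^2$ absorbs the $O(\epsilon_1)$ errors. Your additional remarks on justifying the identity at the top order $k=s+2$ are a welcome refinement the paper leaves implicit.
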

\begin{lemma}\label{lem:v} Let the same assumptions hold as in Lemma \ref{lem:u}. Then, for any $2 \leq k \leq s+1$, we obtain
\begin{align*}
\begin{aligned}
&\frac{d}{dt}\|\nabla^{k} v\|_{L^2}^2 +  C_2\|\nabla^{k+1} v\|_{L^2}^2 \leq  C\|\nabla n\|_{H^{k-1}}^2 + C\epsilon_1\lt( \|\nabla^2 u\|_{L^2}^2 + \|\nabla^{k-1} u\|_{L^2}^2\rt) +C\mathcal{S}_0(1),
\end{aligned}
\end{align*}
where $C_2 := C(\mu - C\epsilon_1)$ is a positive constant independent of $t$.
\end{lemma}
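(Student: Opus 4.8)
The plan is to run an $L^2$ energy estimate at differentiation order $k$ directly on the nonconservative velocity equation
\[
v_t + v\cdot\nabla v + \frac{\nabla p(n+1)}{n+1} + \frac{Lv}{n+1} = \frac{\rho}{n+1}(u-v)
\]
used above: fix a multi-index $\alpha$ with $|\alpha|=k$, apply $\partial^\alpha$, pair with $\partial^\alpha v$ in $L^2(\T^3)$, and integrate by parts, so that $\tfrac12\tfrac{d}{dt}\|\nabla^k v\|_{L^2}^2$ appears on the left together with the contributions of the convection, pressure, viscous, and drag terms, which I would estimate one at a time. The recurring ingredients are the smallness hypothesis $\mathcal{S}^*(T;s)\le\epsilon_1$ combined with Lemma \ref{lem:rn} (which makes $\|\rho\|_{L^\infty}$ and $\|\nabla\rho\|_{L^\infty}$ small), the Sobolev embedding $H^{s'}\hookrightarrow L^\infty$ with $s'>\tfrac32$ (which, since $s>\tfrac52$, makes $\|\nabla v\|_{L^\infty}$, $\|\nabla^2 v\|_{L^\infty}$, $\|\nabla n\|_{L^\infty}$, $\|u-v\|_{L^\infty}$ all small), the Moser inequalities of Lemma \ref{3.2}, and --- crucially --- the Poincar\'e inequality $\|\nabla^k v\|_{L^2}\le C_p\|\nabla^{k+1}v\|_{L^2}$, valid because $\nabla^k v$ has zero mean on $\T^3$, which lets any term of the form (small)$\cdot\|\nabla^k v\|_{L^2}^2$ be absorbed into the viscous dissipation.

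The viscous term is the source of the dissipation. Writing $\tfrac{Lv}{n+1}=\tfrac1{n+1}\big(-\mu\Delta v-(\mu+\lambda)\nabla(\nabla\cdot v)\big)$ and peeling off its principal part $\tfrac1{n+1}\partial^\alpha Lv$ via Lemma \ref{3.2}(ii), one integration by parts gives $-\mu\int\tfrac{|\nabla^{k+1}v|^2}{n+1}\,dx-(\mu+\lambda)\int\tfrac{|\partial^\alpha\nabla\cdot v|^2}{n+1}\,dx$ plus terms in which at least one derivative falls on $\tfrac1{n+1}$; these are all controlled by a small constant times $\big(\|\nabla^{k+1}v\|_{L^2}^2+\|\nabla^k v\|_{L^2}^2\big)$, because the low-order $L^\infty$ norms of $\tfrac1{n+1}$, the norm $\|\nabla^k n\|_{L^2}$, and $\|Lv\|_{L^\infty}\lesssim\|v\|_{H^{s+1}}$ are all small (here $s>\tfrac52$ is used). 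After absorbing the $\|\nabla^{k+1}v\|_{L^2}^2$ part and applying Poincar\'e to the $\|\nabla^k v\|_{L^2}^2$ part, this leaves $-C_2\|\nabla^{k+1}v\|_{L^2}^2$, $C_2=C(\mu-C\epsilon_1)$, on the left. The convection term is routine: its principal part equals $\tfrac12\int(\nabla\cdot v)|\partial^\alpha v|^2\,dx\lesssim\|\nabla v\|_{L^\infty}\|\nabla^k v\|_{L^2}^2$, and the Moser commutator $\|\partial^\alpha(v\cdot\nabla v)-v\cdot\nabla\partial^\alpha v\|_{L^2}\lesssim\|\nabla v\|_{L^\infty}\|\nabla^k v\|_{L^2}$, so the whole term is (small)$\cdot\|\nabla^k v\|_{L^2}^2$, again absorbed by Poincar\'e.

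The pressure and drag terms require integrating by parts in exactly the places that match the right-hand side of the lemma. For the pressure, $\tfrac{\nabla p(n+1)}{n+1}=\gamma(n+1)^{\gamma-2}\nabla n=:P(n)\nabla n$, and the principal part of $\partial^\alpha(P(n)\nabla n)$ is $P(n)\nabla\partial^\alpha n$, which contains $\nabla^{k+1}n$; since only $\|\nabla n\|_{H^{k-1}}$ (hence $\nabla^k n$ but not $\nabla^{k+1}n$) is allowed on the right, I integrate by parts once in $x$ to move that derivative onto $\partial^\alpha v$, obtaining $-\int P(n)(\nabla\cdot\partial^\alpha v)\partial^\alpha n\,dx-\int(\partial^\alpha v)\cdot\nabla P(n)\,\partial^\alpha n\,dx$; Young's inequality bounds the first by $\delta\|\nabla^{k+1}v\|_{L^2}^2+C_\delta\|\nabla^k n\|_{L^2}^2$ (with $\delta$ small relative to $\mu$, and $\|\nabla^k n\|_{L^2}^2\le\|\nabla n\|_{H^{k-1}}^2$) and the second by a small constant times $\big(\|\nabla^k v\|_{L^2}^2+\|\nabla^k n\|_{L^2}^2\big)$, while the Moser commutators $\partial^\alpha(P(n)\nabla n)-P(n)\partial^\alpha\nabla n$ are of lower order and carry a small factor. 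For the drag term, $\partial^\alpha\big(\tfrac{\rho}{n+1}(u-v)\big)$ has principal part $\tfrac{\rho}{n+1}\partial^\alpha u-\tfrac{\rho}{n+1}\partial^\alpha v$; the $-\tfrac{\rho}{n+1}|\partial^\alpha v|^2$ contribution is nonpositive, and in $\int\partial^\alpha v\cdot\tfrac{\rho}{n+1}\partial^\alpha u\,dx$ I write $\partial^\alpha u=\partial_i\partial^\beta u$ with $|\beta|=k-1$ and integrate by parts in $x_i$, so that, using $\|\tfrac{\rho}{n+1}\|_{L^\infty}+\|\nabla\tfrac{\rho}{n+1}\|_{L^\infty}$ small from Lemma \ref{lem:rn}, this piece is bounded by a small constant times $\big(\|\nabla^{k+1}v\|_{L^2}^2+\|\nabla^k v\|_{L^2}^2+\|\nabla^{k-1}u\|_{L^2}^2\big)$. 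The remaining Leibniz pieces in which $s$ or more derivatives fall on $\tfrac{\rho}{n+1}$ are likewise handled by one further integration by parts onto $v$ (gaining $\nabla^{k+1}v$ at the cost of a small coefficient), the pieces that produce $\|\nabla^2 u\|_{L^2}$ give the stated $C\epsilon_1\|\nabla^2 u\|_{L^2}^2$, and the genuinely low-order remainders --- involving $\|u-v\|_{L^\infty}$, $\|v\|_{L^2}$, $\|\nabla v\|_{L^2}$, $\|\nabla u\|_{L^2}$, and quantities like $\int_0^t\|\sqrt{\rho}(u-v)\|_{L^2}^2\,ds$ --- are bounded by $C\mathcal{S}_0(1)$ using Lemmas \ref{lem:sl2} and \ref{est-uvnh1} and the refined estimates. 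Collecting everything and choosing $\delta$ and $\epsilon_1$ small enough to absorb the $\delta\|\nabla^{k+1}v\|_{L^2}^2$ and the other $\|\nabla^{k+1}v\|_{L^2}^2$ contributions into the viscous dissipation yields the claimed inequality.

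I expect the main obstacle to be organizational rather than conceptual. Two points require care: first, the integration-by-parts moves in the pressure and drag terms must be done exactly so as to leave on the right side only $\nabla n$ up to order $k$ and $u$ only at orders $2$ and $k-1$ --- and here the smallness of $\rho$ from Lemma \ref{lem:rn} is essential, since it reduces the drag coupling to $\epsilon_1$-weighted $u$-norms rather than a full-strength term; second, one must check that the variable-coefficient commutators generated by $\tfrac1{n+1}$ in the viscous operator are genuinely of lower order and come with a small factor, so that the dissipation $\mu\int\tfrac{|\nabla^{k+1}v|^2}{n+1}\,dx$ is not swallowed. The hypothesis $s>\tfrac52$ enters repeatedly by placing $\nabla v$, $\nabla^2 v$, $\nabla n$, and $u-v$ in $L^\infty$ with small norm.
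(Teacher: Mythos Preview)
Your proposal is correct and follows essentially the same route as the paper: the paper also differentiates the nonconservative equation for $v$, pairs with $\nabla^k v$, and splits into convection, drag, pressure, and viscous contributions $I_1,\ldots,I_5$, extracting the dissipation $-C(\mu-C\epsilon_1)\|\nabla^{k+1}v\|_{L^2}^2$ from the viscous term and using the Poincar\'e inequality $\|\nabla v\|_{L^2}\le C\|\nabla^l v\|_{L^2}$ to absorb lower-order $v$-terms. The only cosmetic difference is that for the pressure and drag terms the paper integrates by parts once on the full expression (writing $I_4=\int\nabla^{k-1}\big(\tfrac{\nabla p}{n+1}\big)\cdot\nabla^{k+1}v\,dx$ and $I_3=\int\nabla^{k-1}\big(\tfrac{\rho}{n+1}(u-v)\big)\cdot\nabla^{k+1}v\,dx$) and then applies the Moser product estimate, whereas you first isolate the Leibniz principal part and then integrate by parts; the resulting bounds are identical.
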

\begin{lemma}\label{lem:n} Let $T$ be given. Suppose $S^*(T;s) \leq \epsilon_1 \ll 1$. Then for any $2 \leq k \leq s+1$ we obtain
\begin{align*}
\begin{aligned}
&\frac{d}{dt}\int_{\T^3} \nabla^k n \cdot \lt( \frac{\nabla^k n}{2} + \frac{(n+1)^2}{2\mu + \lambda} \nabla^{k-1} v\rt) dx + C_3\|\nabla^k n\|_{L^2}^2\cr
&\qquad \leq C\epsilon_1\|\nabla^2 u\|_{H^{k-2}}^2 + C\epsilon_1\|\nabla v\|_{H^k}^2 + C\|\nabla^k v\|_{L^2}^2 + C\mathcal{S}_0(1),
\end{aligned}
\end{align*}
where $C_3:= \frac{C\gamma}{2\mu + \lambda}$ is a positive constant independent of $t$.
\end{lemma}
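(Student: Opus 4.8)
The plan is to carry out, at the level of $\nabla^k$, the same computation used in Lemma~\ref{est-uvnh1} to estimate $\|\nabla n\|_{L^2}$. First I would apply $\nabla^k$ to the continuity equation $\eqref{two-hydro-eqns-1}_3$ and $\nabla^{k-1}$ to the non-conservative form of the $v$-momentum equation,
\[
v_t + v\cdot\nabla v + \frac{\nabla p(n+1)}{n+1} + \frac{Lv}{n+1} = \frac{\rho}{n+1}(u-v),\qquad \frac{\nabla p(n+1)}{n+1}=\gamma(n+1)^{\gamma-2}\nabla n,
\]
then differentiate the mixed functional $\int_{\T^3}\nabla^k n\cdot\left(\frac12\nabla^k n + \frac{(n+1)^2}{2\mu+\lambda}\nabla^{k-1}v\right)dx$ in time and substitute. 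As in Lemma~\ref{est-uvnh1} I would split the result into $J_1+J_2+J_3$, where $J_1=J_1^1+J_1^2$ collects the terms in which $\nabla^k n_t$ is replaced using the continuity equation, $J_2$ is the term carrying $\nabla^{k-1}v_t$, and $J_3$ is the term where $((n+1)^2)_t$ is expanded, and then expand every $\nabla^k$ and $\nabla^{k-1}$ by the Leibniz rule.

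The key step is the leading-order cancellation. Exactly as in Lemma~\ref{est-uvnh1}, the top-order factor $-(n+1)\nabla^k(\nabla\cdot v)$ coming out of $\nabla^k n_t$ must cancel the top-order $v$-factor produced by the viscosity term $\frac{(n+1)^2}{2\mu+\lambda}\cdot\frac{-Lv}{n+1}$ inside $J_2$; this is precisely why the weight $\frac{(n+1)^2}{2\mu+\lambda}$, whose denominator is the effective viscosity $2\mu+\lambda=\mu+(\mu+\lambda)$, is chosen. Performing the integrations by parts that recombine the $\mu\Delta v$ and $(\mu+\lambda)\nabla(\nabla\cdot v)$ pieces into the common form $(n+1)\nabla(\nabla\cdot v)$, I expect $J_1^1+J_2$ to collapse to the damping term $-\frac{\gamma}{2\mu+\lambda}\int_{\T^3}(n+1)^\gamma|\nabla^k n|^2\,dx$, which is $\le -\frac{\gamma}{2\mu+\lambda}(1-C\epsilon_1)\|\nabla^k n\|_{L^2}^2=:-C_3\|\nabla^k n\|_{L^2}^2$ once $\|n\|_{L^\infty}\le\epsilon_1$ is small, plus error terms; and $J_1^2+J_3$ to reduce to error terms, their leading piece being $\frac{1}{2\mu+\lambda}\int_{\T^3}(n+1)^3|\nabla^{k-1}(\nabla\cdot v)|^2\,dx\le C\|\nabla^k v\|_{L^2}^2$, which produces the $C\|\nabla^k v\|_{L^2}^2$ on the right-hand side.

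Every remaining term is an error carrying at least one low-order factor measured in $L^\infty$ --- $\|v\|_{L^\infty}$, $\|\nabla v\|_{L^\infty}$, $\|n\|_{L^\infty}$, $\|\nabla n\|_{L^\infty}$ or $\|\rho\|_{L^\infty}$ --- all of which are $\lesssim\sqrt{\epsilon_1}$ by $\mathcal{S}^*(T;s)\le\epsilon_1$, by $\|\rho\|_{H^s}\le C\|\rho_0\|_{H^s}\le C\epsilon_1$ (Lemma~\ref{lem:rn}), and by the embedding $H^{s-1}(\T^3)\hookrightarrow L^\infty(\T^3)$ for $s>\frac52$. Using the Moser inequalities of Lemma~\ref{3.2}, Young's inequality, and the periodic Poincar\'e inequality $\|\nabla^j n\|_{L^2}\le C\|\nabla^{j+1}n\|_{L^2}$ (valid for $j\ge1$, and for $j=0$ because $\int_{\T^3}n\,dx=0$), I would bound each of these errors either by $\frac12 C_3\|\nabla^k n\|_{L^2}^2$ (absorbed on the left), or by $C\epsilon_1\|\nabla v\|_{H^k}^2$, or by $C\epsilon_1\|\nabla^2 u\|_{H^{k-2}}^2$ (the drag term $\nabla^{k-1}(\frac{\rho}{n+1}(u-v))$ contributes here, thanks to $\|\rho\|_{L^\infty}\lesssim\epsilon_1$), or by the finitely many $L^2$- and $\dot H^1$-level quantities already controlled in Lemmas~\ref{lem:sl2} and~\ref{est-uvnh1}, which collectively give the $C\mathcal{S}_0(1)$ term. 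Collecting everything yields the stated inequality with $C_3=\frac{C\gamma}{2\mu+\lambda}$.

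The main obstacle will be the bookkeeping of the leading-order cancellation at order $k$: at $H^1$ it is a one-line relabeling of indices, but at order $k$ the Leibniz rule generates many intermediate terms in which a $(k+1)$-st derivative of $v$ appears, and one must verify that after the appropriate integrations by parts each of them either cancels in pairs or acquires a $\sqrt{\epsilon_1}$ factor --- so that it falls into $C\epsilon_1\|\nabla v\|_{H^k}^2$ --- and that no uncontrollable $O(1)\cdot\|\nabla^{k+1}v\|_{L^2}^2$ survives. Keeping the weight $\frac{(n+1)^2}{2\mu+\lambda}$ exact, rather than replacing it by $\frac{1}{2\mu+\lambda}$, is what makes this work, and I would organize the computation by first isolating the constant-coefficient highest-order part, checking the cancellation there against the template of Lemma~\ref{est-uvnh1}, and then treating every variable-coefficient remainder as an error. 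Since this lemma is proved in Appendix~\ref{app:high}, I would present the argument for general $2\le k\le s+1$ with the $H^1$ computation of Lemma~\ref{est-uvnh1} serving as the base case.
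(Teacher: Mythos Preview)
Your proposal is correct and follows essentially the same route as the paper's proof in Appendix~\ref{app:high}: the same mixed functional is differentiated in time, the same four pieces arise (your $J_1^1,J_1^2,J_2,J_3$ correspond to the paper's $J_1,J_2,J_3,J_4$), the damping $-\frac{\gamma}{2\mu+\lambda}\int(n+1)^\gamma|\nabla^k n|^2$ comes from the pressure piece of $\nabla^{k-1}v_t$, the top-order term $\int(n+1)\nabla^{k-1}\partial_{ij}n\,\nabla^{k-1}\partial_i v_j$ produced by $\nabla^k n_t$ cancels exactly against the viscosity contribution, and the $C\|\nabla^k v\|_{L^2}^2$ arises from the surviving $(n+1)^3|\nabla\cdot v|$-type piece. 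Your identification of the main difficulty (tracking the cancellation of $(k{+}1)$-st order $v$-derivatives through the Leibniz expansion) and of the tools used for the errors (Moser, Young, periodic Poincar\'e with $\int_{\T^3}n=0$) matches the paper's execution.
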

\begin{proposition}\label{prop:appri}
Let $T$ be given. Suppose $\mathcal{S}^*(T;s) \leq \epsilon_1 \ll 1$. Then we have
\[
\mathcal{S}^*(T;s) \leq C_0 \mathcal{S}^*_0(s).
\]
\end{proposition}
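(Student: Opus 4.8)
The plan is a derivative-by-derivative bootstrap: using the a priori smallness $\mathcal{S}^*(T;s)\le\epsilon_1$, I would feed the solution into Lemmas \ref{lem:rn}, \ref{lem:sl2}, \ref{est-uvnh1}, \ref{lem:u}, \ref{lem:v} and \ref{lem:n} and climb from the $L^2$ level up to the top-order norms entering $\mathcal{S}^*(T;s)$, while keeping every constant independent of $T$. The density contribution is already settled: Lemma \ref{lem:rn} gives $\sup_{[0,T]}\|\rho\|_{H^s}\le C\|\rho_0\|_{H^s}$, and Corollary \ref{low-bd-rho} provides the uniform lower bound $\rho\ge\underline\rho>0$, which is what makes the $\rho$-weighted energies below coercive. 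For $u,n,v$, Lemmas \ref{lem:sl2} and \ref{est-uvnh1} (together with the refined estimates for $v$ in the remark following Lemma \ref{est-uvnh1}) serve as the base case, giving
\[
\sup_{0\le t\le T}\bigl(\|u(t)\|_{H^1}^2+\|n(t)\|_{H^1}^2+\|v(t)\|_{H^1}^2\bigr)+\int_0^T\|\nabla v(\tau)\|_{H^1}^2\,d\tau\le C\,\mathcal{S}^*_0(s).
\]

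Next I would induct on the derivative order $k$ from $1$ to $s$, upgrading the bound at level $k$ (the display above with $H^1$ replaced by $H^k$) to level $k+1$ in two substeps. First, Lemma \ref{lem:u} at order $k+1$ is a genuinely damped ODE inequality $\tfrac{d}{dt}\|\nabla^{k+1}u\|_{L^2}^2+C_1\|\nabla^{k+1}u\|_{L^2}^2\le\|\nabla^{k+1}v\|_{L^2}^2$; convolving with the kernel $e^{-C_1(t-\tau)}$ and using $\int_0^T\|\nabla^{k+1}v\|_{L^2}^2\,d\tau\le\int_0^T\|\nabla v\|_{H^k}^2\,d\tau\le C\mathcal{S}^*_0(s)$ from the induction hypothesis yields $\sup_{[0,T]}\|\nabla^{k+1}u\|_{L^2}^2\le C\mathcal{S}^*_0(s)$. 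Second, I would combine Lemmas \ref{lem:v} and \ref{lem:n} at order $k+1$ into one weighted functional
\[
\mathcal{F}_{k+1}(t):=B\,\|\nabla^{k+1}v\|_{L^2}^2+D\int_{\T^3}\nabla^{k+1}n\cdot\Bigl(\tfrac{\nabla^{k+1}n}{2}+\tfrac{(n+1)^2}{2\mu+\lambda}\,\nabla^{k}v\Bigr)dx,
\]
choosing first $D$ large, then $B\gg D$, and finally $\epsilon_1$ small, so that the dissipations $DC_3\|\nabla^{k+1}n\|_{L^2}^2$ and $BC_2\|\nabla^{k+2}v\|_{L^2}^2$ absorb the top-order cross terms; the one-order mismatch (the $v$-equation dissipates $\nabla^{k+2}v$, while the $n$- and $u$-sources carry only $\nabla^{k+1}v$) is closed by the periodic interpolation $\|\nabla^{k+1}v\|_{L^2}^2\le\eta\|\nabla^{k+2}v\|_{L^2}^2+C_\eta\|\nabla v\|_{L^2}^2$, whose remainder is already controlled at level $1$. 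Every remaining source — lower-order derivatives of $n$ and $v$, and all derivatives of $u$ — is then ``known'' from the induction hypothesis and the first substep, hence $\le C\mathcal{S}^*_0(s)$; using also $\mathcal{F}_{k+1}\approx\|\nabla^{k+1}v\|_{L^2}^2+\|\nabla^{k+1}n\|_{L^2}^2$ (up to a level-$k$ remainder), one arrives at $\tfrac{d}{dt}\mathcal{F}_{k+1}+c\,\mathcal{F}_{k+1}\le C\mathcal{S}^*_0(s)$. A damped Gronwall step gives $\sup_{[0,T]}\mathcal{F}_{k+1}\le C\mathcal{S}^*_0(s)$, and integrating the retained dissipation upgrades $\int_0^T\|\nabla v\|_{H^{k+1}}^2$, completing level $k+1$.

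Running this up to $k=s+1$ bounds $u,n,v$ in $H^{s+1}$ uniformly and yields $\int_0^T\|\nabla v\|_{H^{s+1}}^2\,d\tau\le C\mathcal{S}^*_0(s)$; feeding this into Lemma \ref{lem:u} at the top order $k=s+2$ (again convolving with $e^{-C_1(t-\tau)}$) controls $\sup_{[0,T]}\|\nabla^{s+2}u\|_{L^2}^2$, and combining all pieces with Lemma \ref{lem:rn} gives $\mathcal{S}^*(T;s)\le C_0\mathcal{S}^*_0(s)$. I expect the main obstacle to be exactly the \emph{$T$-independence} of $C_0$: this forces one never to integrate an undamped quantity in time — i.e.\ always to pair each estimate with its dissipation — and, crucially, to re-read the constant-looking forcing terms ``$C\mathcal{S}_0(1)$'' that appear inside Lemmas \ref{lem:v}--\ref{lem:n} as genuinely \emph{time-integrable} quantities, controlled by the total-energy dissipation of Lemma \ref{lem:energy} and by the exponentially decaying Lyapunov functional of Proposition \ref{prop:large-t}. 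Organizing the bookkeeping of which term is absorbed into which dissipation (and the order in which $D,B,\eta,\epsilon_1$ are fixed) is the only real work; once it is done, a standard continuation argument promotes the local solution of Theorem \ref{thm:local} to the global one and establishes Theorem \ref{thm:main}.
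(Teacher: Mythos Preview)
Your strategy is correct and close to the paper's, but the organization differs in one significant way. The paper does \emph{not} separate the $u$-step from the $(n,v)$-step, and it does \emph{not} carry the time integral $\int_0^T\|\nabla v\|_{H^k}^2$ through a level-by-level induction. Instead it packages all three variables at level $k$ into one block
\[
E_k \;=\; \delta_k^{-1}\|\nabla^k v\|_{L^2}^2 \;+\; \|\nabla^k u\|_{L^2}^2 \;+\; \int_{\T^3}\nabla^k n\cdot\Bigl(\tfrac{\nabla^k n}{2}+\tfrac{(n+1)^2}{2\mu+\lambda}\nabla^{k-1}v\Bigr)dx,
\]
and telescopes $\mathcal{X}_{s+1}=E_{s+1}+\delta_{s+1}E_s+\delta_{s+1}\delta_s E_{s-1}+\cdots$. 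Choosing $\delta_2,\dots,\delta_{s+1}$ large in that order yields one damped ODE $\tfrac{d}{dt}\mathcal{X}_{s+1}+C\,\mathcal{X}_{s+1}\le C\mathcal{S}_0(s{+}1)$ with \emph{constant} forcing, so $T$-independence is automatic by Gronwall---no time-integrability of the sources is ever needed. The top order $\|\nabla^{s+2}u\|_{L^2}$ is then handled by one more damped ODE combining Lemmas~\ref{lem:u} and~\ref{lem:v}, whose forcing is already a constant bounded by the previous step.

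Your scheme, by contrast, treats $u$ separately and therefore must keep the dissipation integral $\int_0^T\|\nabla^{k+1}v\|_{L^2}^2$ alive in the induction hypothesis to feed Lemma~\ref{lem:u} at level $k{+}1$. You correctly flag that propagating this integral forward forces the sources in your $\mathcal{F}_{k+1}$ inequality to be \emph{time-integrable}, not merely constants. That is indeed true---tracing the ``$C\mathcal{S}_0(1)$'' terms back through the proofs of Lemmas~\ref{lem:v}--\ref{lem:n}, they come from $\|u-v\|_{L^2}^2$, $\|\nabla n\|_{L^2}^2$, $\|\nabla v\|_{L^2}^2$, etc., whose time integrals are uniformly bounded by Lemmas~\ref{lem:sl2}--\ref{est-uvnh1}---so your route can be completed, but it requires reopening those lemmas and strengthening the induction hypothesis to include $\int_0^T(\|\nabla n\|_{H^{k-1}}^2+\|\nabla u\|_{H^{k-1}}^2)$. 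The paper's all-levels-at-once packaging (with $u$ inside $E_k$) avoids this entirely: the source $C\epsilon_1\|\nabla^{k+1}u\|_{L^2}^2$ from Lemma~\ref{lem:n} is absorbed by the $u$-dissipation $C_1\|\nabla^{k+1}u\|_{L^2}^2$ built into the same functional, and the source $\|\nabla^{k+1}v\|_{L^2}^2$ from Lemma~\ref{lem:u} is absorbed by the level-$k$ $v$-dissipation (weighted by the large $\delta_{k+1}$).

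One minor bookkeeping point: in your $\mathcal{F}_{k+1}$ the balance should be $D\gg B$, not $B\gg D$. The $v$-inequality contributes $BC\|\nabla^{k+1}n\|_{L^2}^2$ on the right, which must be absorbed by the $n$-dissipation $DC_3\|\nabla^{k+1}n\|_{L^2}^2$; the $n$-inequality's return term $DC\|\nabla^{k+1}v\|_{L^2}^2$ (and $DC\epsilon_1\|\nabla^{k+2}v\|_{L^2}^2$) is then controlled by $BC_2\|\nabla^{k+2}v\|_{L^2}^2$ via Poincar\'e and smallness of $\epsilon_1$, not by making $B$ large.
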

\begin{proof}
We set 
\[
E_1(t) :=\|\nabla v(t)\|_{L^2}^2,
\]
and 
\[
E_k(t) := \delta_k^{-1}\|\nabla^k v\|_{L^2}^2 + \|\nabla^k u\|_{L^2}^2 + \int_{\T^3} \nabla^k n \cdot \lt( \frac{\nabla^k n}{2} + \frac{(n+1)^2}{2\mu + \lambda} \nabla^{k-1} v\rt) dx,
\]
for $2\leq k \leq s+1$, where $\delta_k > 0$ will be determined later. We now claim that for $2 \leq k \leq s+1$, there exist positive constants $\delta_k > 0$ such that 
\bq\label{eq_equ1}
E_k(t) + \sum_{1\leq n \leq k-1}E_n \prod_{n+1 \leq r \leq k}\delta_r \approx \|\nabla v\|_{H^{k-1}}^2 + \|\nabla^2 n\|_{H^{k-2}}^2 + \|\nabla^2 u\|_{H^{k-2}}^2,
\eq
and
\begin{align}\label{eq_equ2}
\begin{aligned}
&\frac{d}{dt}\lt( E_k(t) + \sum_{1\leq n \leq l-1}E_n \prod_{n+1 \leq r \leq k}\delta_r\rt)\cr
&\qquad \leq -C(\delta_2,\cdots,\delta_k, \epsilon_1)\sum_{2\leq l \leq k}\lt( \|\nabla^l v\|_{H^1}^2 + \|\nabla^l u\|_{L^2}^2 + \|\nabla^l n\|_{L^2}^2 \rt)  + C\mathcal{S}_0(1).
\end{aligned}
\end{align}
For the proof of the claim, we use the inductive argument on $k$. We first show that \eqref{eq_equ1} and \eqref{eq_equ2} hold for $k=2$. Since 
\[
\lt|\int_{\T^3} \nabla^2 n \cdot  \frac{(n+1)^2}{2\mu + \lambda} \nabla v\, dx\rt| \leq \frac14\|\nabla^2 n\|_{L^2}^2 + C\|\nabla v\|_{L^2}^2,
\]
we have 
\[
E_2 \geq \delta_2^{-1}\|\nabla^2 v\|_{L^2}^2 + \|\nabla^2 u\|_{L^2}^2 + \frac14 \|\nabla^2 n\|_{L^2}^2 - C\|\nabla v\|_{L^2}^2.
\]
Thus by choosing a sufficiently large positive constant $\delta_2 > 0$ satisfying $\delta_2 > C$ we get
\[
E_2 + \delta_2 E_1 \geq \delta_2^{-1}\|\nabla^2 v\|_{L^2}^2 + \|\nabla^2 u\|_{L^2}^2 + \frac14 \|\nabla^2 n\|_{L^2}^2 + (\delta_2 - C)\|\nabla v\|_{L^2}^2.
\]
For the upper bound of $E_2 + \delta_2E_1$, one can easily obtain
\[
E_2 + \delta_2 E_1 \leq \delta_2^{-1}\|\nabla^2 v\|_{L^2}^2 + \|\nabla^2 u\|_{L^2}^2 + \frac34 \|\nabla^2 n\|_{L^2}^2 + C\|\nabla v\|_{L^2}^2.
\]
Thus \eqref{eq_equ1} holds for $k=2$. Next, by combining  \eqref{est-refine-vh1} and the estimates in Lemmas \ref{lem:u} - \ref{lem:n}, we estimate 
\begin{align*}
\begin{aligned}
\frac{d}{dt}\lt(E_2(t) + \delta_2E_1(t)\rt)&\leq -\lt(  C_2\delta_2 -C \rt)\|\nabla^2 v\|_{L^2}^2 - \lt( C_2 - C\epsilon_1\rt)\|\nabla^3 v\|_{L^2}^2 - \lt(C_1 - C\epsilon_1\delta_2^{-1} - C\epsilon_1 \rt)\|\nabla^2 u\|_{L^2}^2\cr
&\quad - \lt(C_3 - C\delta_2^{-1}\rt)\|\nabla^2 n\|_{L^2}^2 + C\mathcal{S}_0(1).
\end{aligned}
\end{align*}
We then choose a positive constant $\delta_2 > 0$ large enough to have
\[
\frac{d}{dt}\lt(E_2(t) + \delta_2E_1(t) \rt) \leq -C(\delta_2, \epsilon_1)\lt(\|\nabla^2 v\|_{H^1}^2 + \|\nabla^2 u\|_{L^2}^2 + \|\nabla^2 n\|_{L^2}^2\rt) + C\mathcal{S}_0(1),
\]
where $C(\delta_2, \epsilon_1) >0$ is a constant depending only on $\delta_2$ and $ \epsilon_1$. 
Now we assume that \eqref{eq_equ1} and \eqref{eq_equ2} hold for $k=m \leq s$. Then we notice that
 \[
 E_{m+1}(t) + \sum_{1\leq n \leq m}E_n \prod_{n+1 \leq r \leq m+1}\delta_r =  E_{m+1}(t) + \delta_{m+1}\lt( E_m + \sum_{1\leq n \leq m-1}E_n\prod_{n+1\leq r \leq m}\delta_r\rt),
 \]
and where 
\[
E_m + \sum_{1\leq n \leq m-1}E_n\prod_{n+1\leq r \leq m}\delta_r \approx \|\nabla v\|_{H^{m-1}}^2 + \|\nabla^2 n\|_{H^{m-2}}^2 + \|\nabla^2 u\|_{H^{m-2}}^2.
\]
On the other hand, $E_{m+1}$ is estimated by
\[
E_{m+1} \geq \delta_{m+1}^{-1}\|\nabla^{m+1} v\|_{L^2}^2 + \|\nabla^{m+1} u\|_{L^2}^2 + \frac14 \|\nabla^{m+1} n\|_{L^2}^2 - C\|\nabla^m v\|_{L^2}^2.
\]
Thus, we choose a positive constant $\delta_{m+1} > 0$ large enough such that $\delta_{m+1} - C >0$, and this gives the desired lower bound for $E_{m+1}$. Similarly, we can easily find the upper bound of $E_{m+1}$. Hence, we have that \eqref{eq_equ1} holds for $2 \leq k \leq s+1$. Next we estimate 
\begin{align}\label{est_eq0}
\begin{aligned}
\frac{d}{dt}\lt(E_{m+1}(t) + \sum_{1\leq n \leq m}E_n \prod_{n+1 \leq r \leq m+1}\delta_r \rt) &= \frac{d}{dt}E_{m+1}(t) + \delta_{m+1}\frac{d}{dt}\lt( E_m + \sum_{1\leq n \leq m-1}E_n \prod_{n+1 \leq r \leq m}\delta_r\rt) \cr
& =:I_1 + I_2.
\end{aligned}
\end{align}
Here, for the estimate of $I_1$, we again use the estimates in Lemmas \ref{lem:u} - \ref{lem:n} to get
\begin{align}\label{est_eq1}
\begin{aligned}
I_1 &\leq -\lt(C_3 - C\delta_{m+1}^{-1} \rt)\|\nabla^{m+1} n\|_{L^2}^2 - \lt(C_1 - C\epsilon_1 \rt)\|\nabla^{m+1} u\|_{L^2}^2 \cr
&\quad - \lt(C_2\delta_{m+1}^{-1} - C\epsilon_1 \rt)\|\nabla^{m+2} v\|_{L^2}^2 + C\|\nabla^{m+1} v\|_{L^2}^2 + C\mathcal{S}_0(1).
\end{aligned}
\end{align}
On the other hand, it follows from our assumption that there exist positive constants $\delta_2,\cdots,\delta_m > 0$ such that 
\begin{align}\label{est_eq2}
\begin{aligned}
I_2 &\leq -\delta_{m+1}C(\delta_2, \cdots, \delta_m,\epsilon_1)\sum_{2\leq l \leq m}\lt(\|\nabla^l v\|_{H^1}^2 + \|\nabla^l u\|_{L^2}^2 + \|\nabla^l n\|_{L^2}^2\rt)  + C\mathcal{S}_0(1),
\end{aligned}
\end{align}
where $C(\delta_2, \cdots, \delta_m,\epsilon_1)$ is a positive constant independent of $t$. Then we combine the estimates \eqref{est_eq0}-\eqref{est_eq2} to have
\begin{align*}
\begin{aligned}
&\frac{d}{dt}\lt(E_{m+1}(t) + \sum_{1\leq n \leq m}E_n \prod_{n+1 \leq r \leq m+1}\delta_r \rt)\cr
&\quad \leq -\lt(C_3 - C\delta_{m+1}^{-1} \rt)\|\nabla^{m+1} n\|_{L^2}^2 - \lt(C_1 - C\epsilon_1 \rt)\|\nabla^{m+1} u\|_{L^2}^2 - \lt(C_2\delta_{m+1}^{-1} - C\epsilon_1 \rt)\|\nabla^{m+2} v\|_{L^2}^2 \cr
&\qquad -\lt(\delta_{m+1}C(\delta_2, \cdots, \delta_m) - C\rt)\|\nabla^{m+1} v\|_{L^2}^2\cr
&\qquad -\delta_{m+1}C(\delta_2, \cdots, \delta_m)\sum_{2\leq l \leq m}\lt(\|\nabla^l v\|_{L^2}^2 + \|\nabla^l u\|_{L^2}^2 + \|\nabla^l n\|_{L^2}^2\rt) + C\mathcal{S}_0(1),
\end{aligned}
\end{align*}
and this yields that, for $\delta_{m+1} > 0$ large enough, there exists a  constant $C(\delta_2,\cdots,\delta_{m+1},\epsilon_1) > 0$ such that
\begin{align*}
\begin{aligned}
&\frac{d}{dt}\lt( E_{m+1}(t) + \sum_{1\leq n \leq m}E_n \prod_{n+1 \leq r \leq m+1}\delta_r\rt)\cr
&\qquad \leq -C(\delta_2,\cdots,\delta_{m+1}, \epsilon_1)\sum_{2\leq l \leq m+1}\lt( \|\nabla^l v\|_{H^1}^2 + \|\nabla^l u\|_{L^2}^2 + \|\nabla^l n\|_{L^2}^2 \rt)  + C\mathcal{S}_0(1).
\end{aligned}
\end{align*}
This completes the proof of the claim. Next we set
\[
\mathcal{X}_{s+1}(t) := E_{s+1}(t) + \sum_{1\leq n \leq s}E_n \prod_{n+1 \leq r \leq s+1}\delta_r,
\]
and
\[
\mathcal{Y}_{s+1}(t) := \|\nabla v\|_{H^{s}}^2 + \|\nabla^2 u\|_{H^{s-1}}^2 + \|\nabla^2 n\|_{H^{s-1}}^2.
\]
Then it follows from \eqref{eq_equ1} and \eqref{eq_equ2} that

\[
\frac{d}{dt}\mathcal{X}_{s+1}(t) + C \mathcal{X}_{s+1}(t) \leq C\mathcal{S}_0(s+1),
\]
and this again implies that
\bq\label{est_y}
\mathcal{Y}_{s+1}(t) \leq C\mathcal{X}_{s+1}(t) \leq C\mathcal{S}_0(s+1),
\eq
where we used $\mathcal{X}_{s+1}(0) \leq C\mathcal{S}_0(s+1)$. For the estimate of $\|\nabla^{s+2} u\|_{L^2}$, it follows from Lemmas \ref{lem:u} and \ref{lem:v} that
\begin{align*}
\begin{aligned}
\frac{d}{dt}\lt(\|\nabla^{s+1} v\|_{L^2}^2 + C_2\|\nabla^{s+2} u\|_{L^2}^2\rt) 
&\leq -C_1C_2\|\nabla^{s+2} u\|_{L^2}^2 + C\|\nabla n\|_{H^s}^2 + C\epsilon_1\|\nabla^s u\|_{L^2}^2 + C\mathcal{S}_0(1)\cr
&\leq -C_1C_2\|\nabla^{s+2} u\|_{L^2}^2 - C\|\nabla^{s+1} v\|_{L^2}^2 + C\mathcal{S}_0(s+1),
\end{aligned}
\end{align*}
where we used the estimate \eqref{est_y}. Finally, we apply the Gronwall's inequality and combine the estimates \eqref{est:rho} and in Lemma \ref{est-uvnh1}  to conclude 
\[
\mathcal{S}^*(T;s) \leq C\mathcal{S}^*_0(s).
\]
This completes the proof.
\end{proof}
\begin{proof}[Proof of Theorem \ref{thm:main}] We choose a positive constant 
$
M := \min\{ \epsilon_0, \epsilon_1\},
$
where $\epsilon_0$ and $\epsilon_1$ are positive constants appeared in Theorem \ref{thm:local} and Proposition \ref{prop:appri}, respectively. We also choose the initial data $W_0 = (\rho_0,u_0,n_0,v_0)$ satisfying
\[
\|W_0\|_{\mn^s} \leq \frac{M}{2\sqrt{1 + C_0}},
\]
where $C_0 > 0$ is given in Proposition \ref{prop:appri}. We now define the lifespan of solutions to the system \eqref{two-hydro-eqns-1}-\eqref{ini-two-hydro-eqns}:
\[
T := \sup\lt\{t \ge 0\ \ : \sup_{0 \leq \tau \leq t}\|W(\tau)\|_{\mn^s} < M \rt\}.
\]
Since 
\[
\|W_0\|_{\mn^s} \leq \frac{M}{2\sqrt{1 + C_0}} \leq \frac M2 \leq \epsilon_0,
\]
 Theorem \ref{thm:local} implies that $T > 0$. Suppose that $T < +\infty$. Then we can deduce from the definition of $T$ and Theorem \ref{thm:local} that
\bq\label{contra}
\sup_{0 \leq \tau \leq T}\|W(\tau)\|_{\mn^s} = M.
\eq
On the other hand, it follows from Proposition \ref{prop:appri} that
\[
\sup_{0 \leq \tau \leq T}\|W(\tau)\|_{\mn^s} \leq \sqrt{C_0}\|W_0\|_{\mn^s} \leq \frac{M\sqrt{C_0}}{2\sqrt{1 + C_0}} \leq \frac M2,
\]
which is a contradiction to \eqref{contra}. Hence, one can conclude that $T = \infty$. 
This completes the proof.
\end{proof}
%
%
%
%
\appendix
%
%
%
%
%
%
%
%
\section{Some proofs for local existence}\label{Local}
\begin{proof}[Proof of Lemma \ref{prop:invar} ]
$\bullet$ Step 1($\rho$-estimate): We first show the positivity of $\rho$. For this, we define the characteristic $x(s) := x(s;t,x)$ which solves the following differential equations:
\[
\frac{dx(s)}{ds} = \bar u(s,x(s)),
\]
with $x(t) = x$. Then we can easily find that for $0 \leq t \leq T_0$
\[
\rho(x,t) = \rho_0(x_0)\exp\lt( -\int_0^t \nabla \cdot \bar u(s,x(s))ds\rt) \geq \rho_0(x_0)\exp\lt( -CM_0T_0\rt) > 0,
\]
where we used $\|\nabla \bar u(s,x(s))\|_{L^\infty} \leq C\sup_{0 \leq t \leq T_0}\|\nabla \bar u\|_{H^2} \leq CM_0$.

We next show the uniform boundedness of $\rho$. For any $0\leq k \leq s$, it follows from $\eqref{li-sys-a}_1$ that
\[
\nabla^k \pa_t\rho = -\nabla^k(\bar u \cdot \nabla \rho) - \nabla^k(\rho\nabla\cdot \bar u).
\]
This yields that
\begin{align}\label{es:li-rho-1}
\begin{aligned}
\frac12 \frac{d}{dt}\|\nabla^k \rho\|_{L^2}^2 &= -\frac12\int_{\T^3} \bar u \cdot \nabla|\nabla^k \rho|^2 dx - \int_{\T^3} [\nabla^k, \bar u \cdot \nabla]\rho \cdot \nabla^k \rho \,dx\cr
&\quad - \int_{\T^3} \rho \nabla^k (\nabla \cdot \bar u)\cdot \nabla^k \rho \,dx - \int_{\T^3}[\nabla^k, \rho\nabla\cdot]\bar u \cdot \nabla^k \rho \,dx\cr
&\quad := \sum_{i=1}^4 I_i,
\end{aligned}
\end{align}
where $[A,B]$ denotes the commutator operator, i.e., $[A,B] := AB - BA$.
Here $I_i,i=1,\dots,4$ are estimated as follows.
\begin{align}\label{es:li-rho-2}
\begin{aligned}
I_1 &\leq \frac12\int_{\T^3}(\nabla \cdot \bar u)|\nabla^k \rho|^2 dx \leq \|\nabla \bar u\|_{L^\infty}\|\nabla^k \rho\|_{L^2}^2 \leq CM_0\|\nabla^k \rho\|_{L^2}^2,\cr
I_2 &\leq \|[\nabla^k, \bar u \cdot \nabla]\rho\|_{L^2}\|\nabla^k \rho\|_{L^2} \leq CM\|\rho\|_{H^s}\|\nabla^k \rho\|_{L^2},\cr
I_3 &\leq \|\rho\|_{L^\infty}\|\nabla^{k+1}\bar u\|_{L^2}\|\nabla^k \rho\|_{L^2} \leq CM_0\|\rho\|_{H^s}\|\nabla^k \rho\|_{L^2},\cr
I_4 &\leq \|[\nabla^k, \rho\nabla \cdot]\bar u\|_{L^2}\|\nabla^k \rho\|_{L^2} \leq  CM_0\|\rho\|_{H^s}\|\nabla^k \rho\|_{L^2}.
\end{aligned}
\end{align}
We now combine \eqref{es:li-rho-1} and \eqref{es:li-rho-2}, and sum over $k$ to find
\[
\frac{d}{dt} \|\rho\|_{H^s}^2 \leq CM_0\|\rho\|_{H^s}^2,
\]
and this deduces that
\[
\sup_{0 \leq t \leq T_0}\|\rho(t)\|_{H^s} \leq \|\rho_0\|_{H^s}e^{CM_0T_0}.
\]
We then choose properly small positive constants $\epsilon_0, T_0, M_0$ such that 
\[
\sup_{0 \leq t \leq T_0}\|\rho(t)\|_{H^s} \leq M_0.
\]
$\bullet$ Step 2($u$-estimate): We first notice from the positivity of $\rho$ obtained in Step 1 that $u$ satisfies
\[
\pa_t u + \bar u \cdot \nabla u = -(\bar u - \bar v).
\]
Then for $0 \leq k \leq s+2$ we have
\[
\nabla^k \pa_t u = -\nabla^k(\bar u \cdot \nabla u) - \nabla^k(\bar u - \bar v) = -\bar u \cdot \nabla^{k+1} u -[\nabla^k, \bar u \cdot \nabla]u - \nabla^k(\bar u - \bar v),
\]
and this deduces
\begin{align*}
\begin{aligned}
\frac12 \frac{d}{dt}\|\nabla^k u\|_{L^2} &\leq \frac12\int_{\T^3}(\nabla \cdot \bar u)|\nabla^k u|^2 dx + \|[\nabla^k, \bar u \cdot \nabla]u\|_{L^2}\|\nabla^k u\|_{L^2} + \|\nabla^k \bar u\|_{L^2}\|\nabla^k u\|_{L^2} + \|\nabla^k \bar v\|_{L^2}\|\nabla^k u\|_{L^2}\cr
&\leq CM_0\|\nabla^k u\|_{L^2}^2 + CM_0\|\nabla u\|_{H^{s-1}}\|\nabla^k u\|_{L^2} + CM_0\|\nabla^k u\|_{L^2}+ \|\nabla^k \bar v\|_{L^2}\|\nabla^k u\|_{L^2}\cr
&\leq C(1 + M_0)\|\nabla^k u\|_{L^2}^2 + CM_0 + C\|\nabla^k \bar v\|_{L^2}^2.
\end{aligned}
\end{align*}
Then we now sum over $k$ to find
\[
\frac{d}{dt}\|u\|_{H^{s+2}}^2 \leq C(1 + M_0)\|u\|_{H^{s+2}}^2 + CM_0 + C\|\bar v\|_{H^{s+2}}^2,
\]
and this yields
\[
\|u\|_{H^{s+2}}^2 \leq \|u_0\|_{H^{s+2}}^2e^{C(1 + M_0)T_0} + e^{C(1 + M_0)T_0}\lt( CM_0T_0 + C\int_0^{T_0}\|\bar v\|_{H^{s+2}}^2 ds \rt).
\]
Since $\bar v \in L^2(0,T_0;H^{s+2})$, we can again choose small positive constants $\epsilon_0, T_0, M_0$ such that 
\[
\sup_{0 \leq t \leq T_0}\|u\|_{H^{s+2}} \leq M_0.
\]
$\bullet$ Step 3($w$-estimate): Similar fashion with the estimate for positivity of $\rho$ in Step 1, we can also find the positivity of $\bar n + 1$. Furthermore we obtain that there exists a positive constant $c_0 > 0$ such that
\[
c_0^{-1}\mathbb{I}_{4 \times 4} \leq A^0(\bar w) \leq c_0 \mathbb{I}_{4 \times 4} \quad \mbox{for} \quad t \in [0,T_0].
\]
We notice that $A^0(\bar w) \in L^\infty(0,T_0;H^{s+1}(\T^3))$ and the constant $c_0$ is independent of $M_0$ and $T_0$. We first estimate $L^2$-norm of $w$. It follows from $\eqref{li-sys-a}_3$ that
\begin{align*}
\begin{aligned}
&\frac12\lt(\pa_t \lag A^0(\bar w)w,w \rag  - \lag \pa_t A^0(\bar w)w,w \rag \rt) + \frac12\sum_{j=1}^3 \lt( \pa_j \lag A^j(\bar w)w,w\rag - \lag \pa_j A^j(\bar w)w,w \rag \rt)\cr
&\qquad = \lag A^0(\bar w)E_1(\bar n, v), w\rag + \lag A^0(\bar w)E_2(\bar \rho, \bar u, \bar n, \bar v),w\rag.
\end{aligned}
\end{align*}
Here the two terms in the right hand side of the above equation are estimated as follows.
\begin{align*}
\begin{aligned}
&\lag A^0(\bar w)E_1(\bar n, v), w\rag = -\int_{\T^3} v \cdot Lv\, dx = -\mu\int_{\T^3} |\nabla v|^2 dx - (\mu + \lambda) \int_{\T^3} |\nabla \cdot v|^2 dx,\cr
&\lag A^0(\bar w)E_2(\bar \rho, \bar u, \bar n, \bar v),w\rag = \int_{\T^3} v \cdot \bar \rho (\bar u - \bar v) \,dx \leq \|\bar \rho\|_{L^\infty}\|v\|_{L^2}\lt(\|\bar u\|_{L^2} + \|\bar v \|_{L^2}\rt) \leq CM_0\|v\|_{L^2}.
\end{aligned}
\end{align*}
This yields 
\begin{align*}
\begin{aligned}
\|w(t)\|_{L^2}^2 &\leq c_0\int_{\T^3} \lag A^0(\bar w) w, w\rag \big|_{t=t} \,dx\cr
&\leq c_0\int_{\T^3} \lag A^0(\bar w) w, w\rag \big|_{t=0} \,dx + CM_0\int_0^t \|w(s)\|_{L^2}^2 ds + CM_0\int_0^t \|v(s)\|_{L^2} ds\cr
& \quad - \mu\int_0^t \|\nabla v(s)\|_{L^2}^2 ds - (\mu + \lambda)\int_0^t \|\nabla \cdot v(s)\|_{L^2}^2 ds,
\end{aligned}
\end{align*}
and
\[
\|w(t)\|_{L^2}^2 \leq c_0^2\|w_0\|_{L^2}^2 + CM_0T_0+ CM_0\int_0^t \|w(s)\|_{L^2}^2 ds.
\]
Then we obtain
\[
\sup_{0\leq t \leq T_0}\|w(t)\|_{L^2}^2 \leq \lt( c_0^2 \|w_0\|_{L^2}^2 + CM_0T_0\rt)e^{CM_0T_0}.
\]
Thus we have
\[
\sup_{0\leq t \leq T_0}\|w(t)\|_{L^2} \leq M_0.
\]
by choosing small positive constants $\epsilon_0, T_0, M_0$ suitably.
Similarly, for $ 1\leq k \leq s+1$, we find
\[
A^0(\bar w)\pa_t \nabla^k w + \sum_{j=1}^3 A^j(\bar w)\pa_j \nabla^k w = R_k(\bar w, w) + A^0(\bar w) \nabla^k E_1(\bar n, v) + A^0(\bar w)\nabla^k E_2(\overline W),
\]
where
\[
R_k(\bar w, w) = -A^0(\bar w)\lt( [\nabla^k, A^0(\bar w)^{-1}\sum_{j=1}^3 A^j(\bar w)\pa_j]w\rt).
\]
Thus we obtain
\begin{align}\label{all}
\begin{aligned}
&\frac12 \lt( \pa_t \lag A^0(\bar w)\nabla^k w, \nabla^k w \rag - \lag \pa_t A^0(\bar w)\nabla^k w, \nabla^k w \rag\rt) + \frac12\sum_{j=1}^3 \lt( \pa_j \lag A^j(\bar w)\nabla^k w, \nabla^k w\rag - \lag \pa_j A^j(\bar w)\nabla^k w, \nabla^k w\rag\rt)\cr
&\qquad = \lag R_k(\bar w,w), \nabla^k w \rag + \lag A^0(\bar w)\nabla^kE_1(\bar n, v), \nabla^k w\rag + \lag A^0(\bar w)\nabla^k E_2(\bar W), \nabla^k w\rag.
\end{aligned}
\end{align}
Here we first estimate 
\begin{align}\label{part}
\begin{aligned}
\lag \pa_t A^0(\bar w)\nabla^k w, \nabla^k w \rag &\leq \|\pa_t A^0(\bar w)\|_{L^\infty}\|\nabla^k w\|_{L^2}^2 \leq CM_0\|\nabla^k w\|_{L^2}^2,\cr
\sum_{j=1}^3 \lag \pa_j A^j(\bar w)\nabla^k w, \nabla^k w\rag &\leq \sum_{j=1}^3\|\pa_j A^j(\bar w)\|_{L^\infty}\|\nabla^k w\|_{L^2}^2 \leq CM_0\|\nabla^k w\|_{L^2}^2,\cr
\lag R_k(\bar w,w), \nabla^k w \rag &\leq \|R_k(\bar w, w)\|_{L^2}\|\nabla^k w\|_{L^2}\leq CM_0\|\nabla^k w\|_{L^2}^2,
\end{aligned}
\end{align}
where we used
\begin{align*}
\begin{aligned}
\|R_k(\bar w, w)\|_{L^2} &\leq \|A^0(\bar w)\|_{L^\infty}\sum_{j=1}^3\|[\nabla^k, A^0(\bar w)^{-1}\pa_j]w\|_{L^2}\cr
&\leq C\sum_{j=1}^3\lt(\|\nabla(A^0(\bar w)^{-1}A^j(\bar w))\|_{L^\infty}\|\nabla^{k-1}\pa_j w\|_{L^2} + \|\nabla^k(A^0(\bar w)^{-1}A^j(\bar w))\|_{L^2}\|\pa_j w\|_{L^\infty}\rt)\cr
&\leq CM_0\|w\|_{H^{s+1}}.
\end{aligned}
\end{align*}
For the rest of terms, we find 
\begin{align}\label{rest-1}
\begin{aligned}
&\lag A^0(\bar w)\nabla^kE_1(\bar n, v), \nabla^k w\rag\cr
&\quad = \mu \int_{\T^3} (1 + \bar n)\nabla^k\lt( \frac{\nabla \nabla \cdot v}{1 + \bar n} \rt) \cdot \nabla^k v \,dx + (\mu + \lambda) \int_{\T^3} (1 + \bar n)\nabla^k \lt( \frac{\nabla \cdot \nabla v}{1 + \bar n}\rt) \cdot \nabla^k v\,dx\cr
&\quad =:J_1 + J_2.
\end{aligned}
\end{align}
We again decompose $J_1$ into three terms 
\begin{align*}
\begin{aligned}
J_1 &= \mu\int_{\T^3} \nabla \cdot \nabla^{k+1}v \cdot \nabla^k v\, dx + \mu \int_{\T^3}(1 + \bar n)\nabla^k\lt(\frac{1}{1 + \bar n}\rt)\nabla\cdot\nabla v \cdot \nabla^k v\,dx\cr
&\quad + \mu\sum_{1 \leq l \leq k-1}\binom{k}{l}\int_{\T^3}(1+\bar n)\nabla^l \lt(\frac{1}{1+\bar n}\rt)\nabla^{k-l}(\nabla \cdot \nabla v) \cdot \nabla^k v\,dx\cr
&=: J_1^1 + J_1^2 + J_1^3,
\end{aligned}
\end{align*}
and $J_1^i,i=1,2,3$ are estimated by
\begin{align*}
\begin{aligned}
J_1^1 &= -\mu\int_{\T^3} |\nabla^{k+1} v |^2 dx = -\mu\|\nabla^{k+1}v\|_{L^2}^2,\cr
J_1^2 &\leq \mu\|1 + \bar n\|_{L^\infty}\lt\|\nabla^k\lt(\frac{1}{1 + \bar n} \rt)\rt\|_{L^2}\|\nabla^2 v\|_{L^4}\|\nabla^k v\|_{L^4} \cr
&\leq C\|\bar n\|_{H^k}\|\nabla^2 v\|_{H^1}\|\nabla^k v\|_{H^1} \leq CM_0\|\nabla^2 v\|_{H^1}\|\nabla^k v\|_{H^1},\cr
J_1^3 &\leq C\|1 + \bar n\|_{L^\infty}\sum_{1 \leq l \leq k-1}\|\nabla^k v\|_{L^4}\lt\| \nabla^l \lt(\frac{1}{1 + \bar n}\rt)\rt\|_{L^4}\|\nabla^{k+2-l}v\|_{L^2}\cr
&\leq C\|\nabla^k v\|_{H^1}\|\bar n\|_{H^k} \|\nabla^2 v\|_{H^{k-1}} \leq CM_0\|\nabla^k v\|_{H^1}\|\nabla^2 v\|_{H^{k-1}}.
\end{aligned}
\end{align*}
This implies
\bq\label{rest-1-1}
J_1 \leq -\mu\|\nabla^{k+1}v\|_{L^2}^2 + CM_0(\|\nabla^2 v\|_{H^1} + \|\nabla^2 v\|_{H^{k-1}})\|\nabla^k v\|_{H^1}.
\eq
Similarly, we can also obtain
\bq\label{rest-1-2}
J_2 \leq -(\mu + \lambda)\|\nabla^k \nabla \cdot v\|_{L^2}^2 + CM_0(\|\nabla^2 v\|_{H^1} + \|\nabla^2 v\|_{H^{k-1}})\|\nabla^k v\|_{H^1}.
\eq
By combining \eqref{rest-1}, \eqref{rest-1-1}, and \eqref{rest-1-2}, we have
\begin{align}\label{rest1}
\begin{aligned}
\lag A^0(\bar w)\nabla^kE_1(\bar n, v), \nabla^k w\rag &\leq -\mu\|\nabla^{k+1}v\|_{L^2}^2 -(\mu + \lambda)\|\nabla^k \nabla \cdot v\|_{L^2}^2 + CM_0(\|\nabla^2 v\|_{H^1} + \|\nabla^2 v\|_{H^{k-1}})\|\nabla^k v\|_{H^1}.
\end{aligned}
\end{align}
We next estimate
\begin{align}\label{rest-2-1}
\begin{aligned}
\lag A^0(\bar w)\nabla^k E_2(\overline W), \nabla^k w\rag &= \int_{\T^3} (1 + \bar n)\nabla^k \lt( \frac{1}{1 + \bar n}\bar \rho (\bar u - \bar v)\rt) \cdot \nabla^k v\,dx\cr
&= \int_{\T^3} \nabla^{k-1}\lt( \frac{1}{1 + \bar n}\bar \rho(\bar u - \bar v)\rt) \cdot \nabla\lt( (1 + \bar n) \cdot \nabla^k v\rt)dx\cr
&= \int_{\T^3} \nabla^{k-1}\lt( \frac{1}{1 + \bar n}\bar \rho(\bar u - \bar v)\rt) \cdot \lt( \nabla \bar n \cdot \nabla^k v + (1 + \bar n)\nabla^{k+1} v\rt)dx\cr
&\leq \lt\|\nabla^{k-1}\lt( \frac{1}{1 + \bar n}\bar \rho(\bar u - \bar v)\rt)\rt\|_{L^2}\lt( \|\nabla \bar n\|_{L^\infty}\|\nabla^k v\|_{L^2} + \|1 + \bar n\|_{L^\infty}\|\nabla^{k+1}v\|_{L^2}\rt)\cr
&\leq CM_0\lt(M_0 + \|v\|_{H^2} + \|v\|_{H^{k-1}}\rt)\lt( \|\nabla^k v\|_{L^2} + \|\nabla^{k+1}v\|_{L^2}\rt),
\end{aligned}
\end{align}
where we used
\begin{align*}
\begin{aligned}
\lt\|\nabla^{k-1}\lt( \frac{1}{1 + \bar n}\bar \rho(\bar u - \bar v)\rt)\rt\|_{L^2} &\leq \lt\|\frac{\bar \rho}{1 + \bar n}\rt\|_{H^{k-1}}\|u-v\|_{L^\infty} + \lt\| \frac{\bar \rho}{1 + \bar n}\rt\|_{L^\infty}\|u-v\|_{H^{k-1}}\cr
&\leq \|\bar\rho\|_{H^{k-1}}\|1/(1 + \bar n)\|_{L^\infty}\|u-v\|_{H^2} + \|\bar \rho\|_{L^\infty}\|1/(1+\bar n)\|_{H^{k-1}}\|u-v\|_{H^{k-1}}\cr
&\leq CM_0\lt(M_0 + \|v\|_{H^2} + \|v\|_{H^{k-1}}\rt).
\end{aligned}
\end{align*}
Now we collect the estimates \eqref{all}, \eqref{part}, \eqref{rest1}, and \eqref{rest-2-1} to get
\begin{align*}
\begin{aligned}
\|\nabla^k w(t)\|_{L^2}^2 &\leq c_0\int_{\T^3} \lag A^0(\bar w)\nabla^k w, \nabla^k w\rag \big|_{t=t} \,dx\cr
&\leq c_0\int_{\T^3}\lag A^0(\bar w)\nabla^k w, \nabla^k w\rag \big|_{t=0} \,dx + CM_0\int_0^t \|\nabla^k w(s)\|_{L^2}^2 ds\cr
& \,\,\, + CM_0\int_0^t \lt(\|\nabla^2 v(s)\|_{H^1} + \|\nabla^2 v(s)\|_{H^{k-1}}\rt)\|\nabla^k v(s)\|_{H^1} ds\cr
&\,\,\, + CM_0\int_0^t\lt(M_0 + \|v(s)\|_{H^2} + \|v(s)\|_{H^{k-1}}\rt)\lt( \|\nabla^k v(s)\|_{L^2} + \|\nabla^{k+1}v(s)\|_{L^2}\rt)ds\cr
&\,\,\, - c_0\mu \int_0^t \|\nabla^{k+1}v(s)\|_{L^2}^2 ds - c_0(\mu + \lambda)\int_0^t \|\nabla^k \nabla \cdot v(s)\|_{L^2}^2 ds,
\end{aligned}
\end{align*}
and by summing over $k$ we have
\begin{align*}
\begin{aligned}
\|\nabla w(t)\|_{H^s}^2 &\leq C\|\nabla w_0\|_{H^s}^2 + CM_0^3T_0 + CM_0\int_0^t \|\nabla v(s)\|_{H^s}^2 ds- \lt( c_0 \mu - CM_0\rt)\int_0^t \|\nabla^2 v(s)\|_{H^s}^2 ds,
\end{aligned}
\end{align*}
due to $\|v\|_{L^2} \leq M_0$. Since $\|\nabla v\|_{H^s} \leq \|\nabla w\|_{H^s}$, we get
\[
\sup_{0 \leq t \leq T_0}\|\nabla w(t)\|_{H^s}^2 \leq\lt( C\|\nabla w_0\|_{H^s}^2 + CM_0^3T_0\rt)e^{CM_0T_0}.
\]
for $M_0$ small enough. Hence this concludes 
\[
\sup_{0 \leq t \leq T_0}\|\nabla w(t)\|_{H^s} \leq M_0,
\]
by selecting small positive constants $\epsilon_0, T_0, M_0$.
\end{proof}
\begin{proof}[Proof of Lemma \ref{lem:cauchy}] 


For notational simplicity, we set
$w^m = (n^m,v^m)$, 
\[
A^0_m := A^0(w^m), \quad A^j_m:= A^j(w^m), \quad E_1^m:=E_1(n^m,v^{m+1}), \quad E_2^m:= E_2(W^m),
\]
\[
\rho^{m+1,m}:= \rho^{m+1} - \rho^m, \quad u^{m+1,m}:= u^{m+1} - u^m, \quad \mbox{and} \quad w^{m+1,m}:=w^{m+1} - w^m.
\]
Then it follows from $\eqref{approx-sys}_1$ that
\[
\pa_t \rmmm + \umm \cdot \nabla \rho^{m+1} + u^{m-1}\cdot \nabla \rmmm + \rmmm \nabla \cdot u^m + \rho^m \nabla \cdot \umm = 0.
\]
Upon $L^2$ estimate, we get
\begin{align*}
\begin{aligned}
\frac12\frac{d}{dt}\|\rmmm\|_{L^2}^2 &= - \int_{\T^3} (\umm) \cdot \rho^{m+1} \rmmm \,dx + \frac12 \int_{\T^3} (\nabla \cdot u^{m-1})|\rmmm|^2 dx\cr
&\quad - \int_{\T^3} (\nabla \cdot u^m)|\rmmm|^2 dx - \int_{\T^3} \rho^m \nabla \cdot (\umm) \rmmm \,dx\cr
&\leq CM_0\lt( \|\rmmm\|_{L^2}^2 + \|\umm\|_{H^1}^2\rt).
\end{aligned}
\end{align*}
We apply the Gronwall's inequality to obtain
\[
\|\rmmm(t)\|_{L^2}^2 \ls \int_0^t \|\umm(s)\|_{H^1}^2 ds.
\]
We next find that
\[
\pa_t \ummm + \umm \cdot \nabla u^{m+1} + u^{m-1}\cdot \nabla \ummm = - \umm + \vmm.
\]
Thus we obtain
\begin{align}\label{diff-u-1}
\begin{aligned}
\frac12\frac{d}{dt}\|\ummm\|_{L^2}^2 &= -\int_{\T^3} (\umm \cdot \nabla u^{m+1})\cdot \ummm \,dx - \int_{\T^3} (u^{m-1}\cdot \nabla \ummm) \cdot \ummm \,dx\cr
&\quad -\int_{\T^3} \umm \cdot \ummm \,dx + \int_{\T^3} \vmm \cdot \ummm \,dx\cr
&\ls \|\umm\|_{L^2}^2 + \|\ummm\|_{L^2}^2 + \|\vmm\|_{L^2}^2.
\end{aligned}
\end{align}
Similarly, we can also deduce 
\bq\label{diff-u-2}
\frac{d}{dt}\|\nabla (\ummm)\|_{L^2}^2 \ls \|\umm\|_{H^1}^2 + \|\ummm\|_{H^1}^2 + \|\vmm\|_{H^1}^2.
\eq
We now combine \eqref{diff-u-1} and \eqref{diff-u-2} to get
\[
\|\ummm(t)\|_{H^1}^2 \ls \int_0^t \|\umm(s)\|_{H^1}^2 + \|\ummm(s)\|_{H^1}^2 + \|\vmm(s)\|_{H^1}^2 ds.
\]
Then we apply the Gronwall's inequality to find
\[
\|\ummm(t)\|_{H^1}^2 \ls \int_0^t \|\umm(s)\|_{H^1}^2 ds + \int_0^t \int_0^s \|\vmm(\tau)\|_{H^1}^2 d\tau ds,
\]
where we used
\[
\int_0^t \int_0^s \|\umm(\tau)\|_{H^1}^2 d\tau ds \leq T_0\int_0^t \|\umm(s)\|_{H^1}^2 ds.
\]
Hence we have
\bq\label{res-rhou}
\|\rmmm(t)\|_{L^2}^2 + \|\ummm(t)\|_{H^1}^2 \ls  \int_0^t \|\umm(s)\|_{H^1}^2 ds + \int_0^t \int_0^s \|\vmm(\tau)\|_{H^1}^2 d\tau ds.
\eq
We now estimate $\wmmm$ in $L^2$-norm. It follows from $\eqref{approx-sys}_3$ that $\wmmm$ satisfies
\begin{align}\label{diff-w}
\begin{aligned}
&A^0_m\pa_t \wmmm + \sum_{j=1}^3 A^j_m \pa_j \wmmm \cr
&\quad = -(A^0_m - A^0_{m-1})\pa_t w^m - \sum_{j=1}^3(A^j_m - A^j_{m-1})\pa_j w^m + A^0_mE_1^m - A^0_{m-1}E_1^{m-1}\cr
&\qquad + A^0_m(E_2^m - E_2^{m-1}) + (A^0_m - A^0_{m-1})E_2^{m-1}.
\end{aligned}
\end{align}
Note that 
\[
c_0^{-1}\mathbb{I}_{4 \times 4} \leq A^0_m \leq c_0 \mathbb{I}_{4 \times 4} \quad \mbox{for all} \quad m \geq 0,
\]
where $c_0$ is a positive constant independent of $m$. Then we use \eqref{diff-w} to get
\begin{align*}
\begin{aligned}
&\frac12\lt(\pa_t \lag A^0_m \wmmm,\wmmm \rag - \lag \pa_t A^0_m \wmmm,\wmmm\rag\rt)\cr
&\,\,\, + \frac12\sum_{j=1}^3 \lt( \pa_j \lag A^j_m \wmmm,\wmmm \rag - \lag \pa_jA^j_m \wmmm,\wmmm\rag\rt)\cr
&\quad = -\lag (A^0_m - A^0_{m-1})\pa_t w^m, \wmmm \rag - \sum_{j=1}^3 \lag (A^j_m - A^j_{m-1})\pa_j w^m, \wmmm\rag \cr
&\quad \quad +\lag A^0_mE_1^m - A^0_{m-1}E_1^{m-1}, \wmmm\rag + \lag A^0_m(E_2^m - E_2^{m-1}), \wmmm\rag + \lag (A^0_m - A^0_{m-1})E_2^{m-1}, \wmmm \rag.
\end{aligned}
\end{align*}
We notice that
\[
|A^0_m - A^0_{m-1}| \ls |\wmm|, \quad |A^j_m - A^j_{m-1}| \ls |\wmm|,
\]
\[
|E_2^m - E_2^{m-1}| \ls |\rmmm| + |\ummm| + |\vmmm| + |\nmmm|,
\]
and 
\begin{align*}
\begin{aligned}
\lag A^0_mE_1^m - A^0_{m-1}E_1^{m-1}, \wmmm\rag &= -\int_{\T^3} L\vmmm \cdot \vmmm dx \cr
&= -\mu\int_{\T^3} |\nabla \vmmm|^2 dx -(\mu + \lambda)\int_{\T^3} |\nabla \cdot \vmmm|^2 dx.
\end{aligned}
\end{align*}
Thus we obtain
\begin{align}\label{res-w}
\begin{aligned}
&\|\wmmm(t)\|_{L^2}^2  \leq c_0 \int_{\T^3} \lag A^0_m \wmmm, \wmmm\rag\big|_{t=t} \,dx\cr
&\quad \leq c_0\int_{\T^3} \lag A^0_m \wmmm,\wmmm\rag \big|_{t=0} \,dx\cr
&\qquad + C\int_0^t \lt(\|\wmm(s)\|_{L^2} + \|\rmm(s)\|_{L^2} + \|\umm(s)\|_{L^2}\rt)\|\wmmm(s)\|_{L^2}\,ds\cr
&\qquad - c_0\mu\int_0^t \|\nabla \vmmm(s)\|_{L^2}^2 ds - c_0(\mu + \lambda)\int_0^t \|\nabla \cdot \wmmm(s)\|_{L^2}^2 ds\cr
&\quad \leq C\int_0^t \|\wmmm(s)\|_{L^2}^2 ds + C\int_0^t \|\wmm(s)\|_{L^2}^2 ds + C\int_0^t \|\rmm(s)\|_{L^2}^2 ds\cr
&\qquad  + C\int_0^t \|\umm(s)\|_{L^2}^2 ds - c_0\mu\int_0^t \|\nabla \vmmm(s)\|_{L^2}^2 ds.
\end{aligned}
\end{align}
Now we collect the estimates \eqref{res-rhou} and \eqref{res-w} to find
\begin{align*}
\begin{aligned}
&\|\rmmm(t)\|_{L^2}^2 + \|\ummm(t)\|_{H^1}^2 + \|\wmmm(t)\|_{L^2}^2 + \int_0^t\|\nabla \vmmm(s)\|_{L^2}^2 ds \cr
&\qquad \ls \int_0^t \|\rmm(s)\|_{L^2}^2 ds + \int_0^t \|\umm(s)\|_{L^2}^2 ds + \int_0^t \|\wmm(s)\|_{L^2}^2 ds\cr
&\qquad \quad + \int_0^t \|\wmmm(s)\|_{L^2}^2 ds + \int_0^t \int_0^s \|\nabla \vmm(\tau)\|_{L^2}^2 d\tau ds,
\end{aligned}
\end{align*}
and this yields again
\[
\|\rmmm(t)\|_{L^2}^2 + \|\ummm(t)\|_{H^1}^2 + \|\wmmm(t)\|_{L^2}^2 + \int_0^t\|\nabla \vmmm(s)\|_{L^2}^2 ds \ls \frac{T_0^{m+1}}{(m+1)!}, 
\]
for $t \leq T_0.$ This completes the proof.
\end{proof}
\section{High-order estimates of $u,v$ and $n$}\label{app:high}
In this part, we provide the details of the proofs for Lemmas \ref{lem:u} - \ref{lem:n}.  
\subsection{The proof of Lemma \ref{lem:u}}
For $2 \leq k \leq s+2$, we get
\[
\nabla^k u_t = -\nabla^k(u\cdot\nabla u) - \nabla^k(u-v) = - u\cdot\nabla^{k+1}u - [\nabla^k,u\cdot \nabla]u - \nabla^k(u-v).
\]
Then we obtain
\begin{align*}
\begin{aligned}
\frac12\frac{d}{dt}\|\nabla^k u\|_{L^2}^2 &\leq \frac12\int_{\T^3} (\nabla \cdot u)|\nabla^k u|^2 dx + \|[\nabla^k, u\cdot \nabla]u\|_{L^2}\|\nabla^k u\|_{L^2} - \|\nabla^k u\|_{L^2}^2+ \|\nabla^k v\|_{L^2}\|\nabla^k u\|_{L^2}\cr
&\leq C\epsilon_1\|\nabla^k u\|_{L^2}^2 - \frac12\|\nabla^k u\|_{L^2}^2 + \frac12\|\nabla^k v\|_{L^2}^2\cr
&\leq -\lt(\frac12 - C\epsilon_1\rt)\|\nabla^k u\|_{L^2}^2 + \frac12\|\nabla^k v\|_{L^2}^2.
\end{aligned}
\end{align*}
This completes the proof.
\subsection{The proof of Lemma \ref{lem:v}}
Recall that $v$ satisfies 
\[
v_t + v \cdot \nabla v + \frac{\nabla p(n+1)}{n+1} + \frac{Lv}{n+1} = \frac{\rho}{n+1}(u-v).
\]
Then, for $2 \leq k \leq s+1$, it follows that
\[
\nabla^{k} v_t + \nabla^{k}(v\cdot \nabla v) + \nabla^{k}\lt(\frac{\nabla p(n+1)}{n+1}\rt) + \nabla^{k}\lt(\frac{Lv}{n+1}\rt) = \nabla^{k}\lt(\frac{\rho}{n+1}(u-v)\rt),
\]
and
\begin{align*}
\begin{aligned}
\nabla^{k} v_t &= - v \cdot \nabla^{k+1} v - [\nabla^{k},v\cdot \nabla]v + \nabla^{k}\lt(\frac{\rho}{n+1}(u-v)\rt)  - \nabla^{k} \lt(\frac{\nabla p(n+1)}{n+1}\rt) - \nabla^{k} \lt(\frac{Lv}{n+1}\rt).
\end{aligned}
\end{align*}
This yields that
\begin{align*}
\begin{aligned}
\frac12\frac{d}{dt}\|\nabla^{k} v\|_{L^2}^2 
&= -\int_{\T^3} (v\cdot \nabla^{k+1}v) \cdot \nabla^{k} v +  [\nabla^{k},v\cdot \nabla]v \cdot \nabla^{k} v - \nabla^{k}\lt(\frac{\rho}{n+1}(u-v)\rt) \cdot \nabla^{k} v \,dx\cr
&-\int_{\T^3} \nabla^{k} \lt(\frac{\nabla p(n+1)}{n+1}\rt) \cdot \nabla^{k} v + \nabla^{k} \lt(\frac{Lv}{n+1}\rt) \cdot \nabla^{k} v\,dx\cr
&=: \sum_{i=1}^5 I_i.
\end{aligned}
\end{align*}
Here $I_i,i=1,2,4$ are easily estimated as follows.
\begin{align*}
\begin{aligned}
I_1 &\leq \frac12\int_{\T^3} (\nabla \cdot v)|\nabla^{k} v|^2 dx \ls \epsilon_1 \|\nabla^{k} v\|_{L^2}^2,\cr
I_2 &\leq \|[\nabla^{k}, v \cdot \nabla]v\|_{L^2}\|\nabla^{k} v\|_{L^2} \ls \epsilon_1\|\nabla^{k} v\|_{L^2}^2,\cr
I_4 &= \int_{\T^3} \nabla^{k-1} \lt( \frac{\nabla p(n+1)}{n+1}\rt)\cdot \nabla^{k+1} v \,dx \cr
&=\int_{\T^3} \nabla^{k-1} \lt( \gamma(n+1)^{\gamma-2}\nabla n\rt)\cdot \nabla^{k+1} v \,dx \cr
&\leq C\sum_{1 \leq l \leq k-1}\int_{\T^3} |\nabla^l ((n+1)^{\gamma-2})||\nabla^{k-l} n||\nabla^{k+1}v|\,dx + C\int_{\T^3} |\nabla^k n| |\nabla^{k+1} v|\,dx\cr
&\leq C\sum_{1 \leq l \leq k-1}\|\nabla^l ((n+1)^{\gamma-2})\|_{H^1}\|\nabla^{k-l} n\|_{H^1}\|\nabla^{k+1}v\|_{L^2} + C\|\nabla^k n\|_{L^2}\|\nabla^{k+1}v\|_{L^2}\cr
&\leq C\|\nabla n\|_{H^{k-1}}\|\nabla^{k+1}v\|_{L^2}\cr
&\leq C\delta^{-1}\|\nabla n\|_{H^{k-1}}^2 + C\delta\|\nabla^{k+1}v\|_{L^2}^2,
\end{aligned}
\end{align*}
where $\delta > 0$ will be determined later. For the estimate of $I_3$, we obtain
\[
I_3 = \int_{\T^3} \nabla^{k-1} \lt( \frac{\rho}{n+1} (u-v)\rt) \cdot \nabla^{k+1} v \,dx \leq \lt\|\nabla^{k-1} \lt( \frac{\rho}{n+1} (u-v)\rt)\rt\|_{L^2}\|\nabla^{k+1}v\|_{L^2}.
\]
Note that
\begin{align*}
\begin{aligned}
\lt\|\nabla^{k-1} \lt( \frac{\rho}{n+1} (u-v)\rt)\rt\|_{L^2} &\leq \lt\|\frac{\rho}{n+1}\rt\|_{H^{k-1}}\|u-v\|_{L^\infty} + \lt\| \frac{\rho}{n+1}\rt\|_{L^\infty}\|u-v\|_{H^{k-1}}\cr
&\leq \lt(\|\rho\|_{H^{k-1}}\lt\|\frac{1}{n+1}\rt\|_{L^\infty} + \|\rho\|_{L^\infty}\lt\|\frac{1}{n+1}\rt\|_{H^{k-1}}\rt)\|u-v\|_{H^2} + C\epsilon_1\|u - v\|_{H^{k-1}}\cr
&\leq C\epsilon_1\lt(\|u-v\|_{H^2} + \|u - v\|_{H^{k-1}}\rt),
\end{aligned}
\end{align*}
Thus we find
\[
I_3 \leq C\epsilon_1\|u-v\|_{H^{k-1}}^2 + C\epsilon_1\|u-v\|_{H^2}^2 + C\epsilon_1\|\nabla^{k+1} v\|_{L^2}^2.
\]
Finally for the estimate $I_5$, we divide it into two parts:
\begin{align*}
\begin{aligned}
I_5 &= \int_{\T^3} \nabla^{k} v \cdot \nabla^{k}\lt( \frac{\mu}{n+1}\nabla \cdot \nabla v + \frac{\mu + \lambda}{n+1}\nabla \nabla \cdot v \rt)dx\cr
&=:I_5^1 + I_5^2.
\end{aligned}
\end{align*}
Then $I_5^1$ is estimated as follows.
\begin{align*}
\begin{aligned}
I_5^1 &= \int_{\T^3} \frac{\mu}{n+1}\nabla\cdot\nabla^{k+1}v \cdot \nabla^{k}v \,dx + 
\int_{\T^3}\nabla^{k}v \cdot \nabla^{k}\lt( \frac{\mu}{n+1}\rt) \nabla \cdot \nabla v \,dx\cr
&\quad + \mu\sum_{1\leq l \leq k-1} \binom{k}{l}\int_{\T^3}\nabla^{k}v \cdot \nabla^l\lt( \frac{1}{n+1}\rt)\nabla^{k-l}\nabla \cdot \nabla v \,dx\cr
&=:I_{5}^{1,1} + I_{5}^{1,2} + I_{5}^{1,3},\cr
\end{aligned}
\end{align*}
where
\begin{align*}
\begin{aligned}
I_{5}^{1,1}&= - \int_{\T^3} \nabla\lt( \frac{\mu}{n+1}\nabla^{k}v\rt)\cdot \nabla^{k+1} v \,dx\cr
&= \int_{\T^3} \lt( \frac{\mu\nabla n}{(n+1)^2} \cdot \nabla^{k}v\rt)\cdot \nabla^{k+1} v \,dx - \int_{\T^3} \frac{\mu}{n+1}|\nabla^{k+1} v|^2 dx\cr
&\leq C\epsilon_1\|\nabla^{k}v\|_{L^2}\|\nabla^{k+1}v\|_{L^2} - C{\mu}\|\nabla^{k+1} v\|_{L^2}^2\cr
&\leq C\epsilon_1\|\nabla^{k}v\|_{L^2}^2 - C(\mu - C\epsilon_1)\|\nabla^{k+1}v\|_{L^2}^2,\cr
I_{5}^{1,2}&\leq C\|\nabla^{k}v\|_{L^4}\|\nabla^2 v\|_{L^4}\lt\|\nabla^{k}\lt( \frac{1}{n+1}\rt) \rt\|_{L^2}\cr
&\leq C\|\nabla^{k}v\|_{H^1}\|\nabla^2 v\|_{H^1}\|n\|_{H^{k}}\cr
&\leq C\epsilon_1\|\nabla^{k+1}v\|_{L^2}^2 + C\epsilon_1\|\nabla^2 v\|_{H^1}^2,\cr
I_{5}^{1,3} &\ls \sum_{1\leq l \leq k-1}\|\nabla^{k}v\|_{L^4}\lt\| \nabla^l\lt( \frac{1}{n+1}\rt)\rt\|_{L^4}\|\nabla^{k+2-l} v\|_{L^2}\cr
&\ls \|\nabla^{k}v\|_{H^1}\|n\|_{H^{k}}\|\nabla^3 v\|_{H^{k-2}} \cr
&\leq C\epsilon_1\|\nabla^{k+1}v\|_{L^2}^2 + C\epsilon_1\|\nabla v\|_{H^{k-1}}^2.
\end{aligned}
\end{align*}
This yields 
\[
I_{5}^1 \leq C\epsilon_1\|\nabla v\|_{H^{k-1}}^2 - C(\mu - C\epsilon_1)\|\nabla^{k+1}v\|_{L^2}^2.
\]
Similarly, we can also obtain
\[
I_5^2 \leq C\epsilon_1\|\nabla v\|_{H^{k-1}}^2.
\]
This deduces
\begin{align*}
\begin{aligned}
\frac{d}{dt}\|\nabla^k v\|_{L^2}^2 + C(\mu - C\epsilon_1)\|\nabla^{k+1} v\|_{L^2}^2 \leq \frac{C}{\mu}\|\nabla n\|_{H^{k-1}}^2 + C\epsilon_1(\|\nabla^2 u\|_{L^2}^2 + \|\nabla^{k-1} u\|_{L^2}^2) + C\mathcal{S}_0(1),
\end{aligned}
\end{align*}
where we chose $\delta = \mu/2$ and used $\|\nabla v\|_{L^2} \leq C\|\nabla^l v\|_{L^2}$ for $l \in [1,k+1]$.
\subsection{The proof of Lemma \ref{lem:n}} Similar to the previous arguments in Lemma \ref{est-uvnh1}, we estimate
\begin{align*}
\begin{aligned}
&\frac{d}{dt}\int_{\T^3} \nabla^k n \cdot \lt( \frac{\nabla^k n}{2} + \frac{(n+1)^2}{2\mu + \lambda} \nabla^{k-1} v\rt) dx\cr
&\quad = \int_{\T^3} \nabla^k n_t \cdot \nabla^k n \,dx + \int_{\T^3} \frac{(1+n)^2}{2\mu + \lambda}\nabla^k n_t \cdot \nabla^{k-1}v\,dx\cr
&\qquad  + \int_{\T^3} \frac{(1+n)^2}{2\mu + \lambda}\nabla^k n \cdot \nabla^{k-1} v_t \,dx + \int_{\T^3} \nabla^{k-1}n \cdot \lt( \frac{2(1+n)}{2\mu + \lambda}n_t\nabla^{k-1}v \rt)dx\cr
&\quad =: \sum_{i=1}^4 J_i,
\end{aligned}
\end{align*}
for $2\leq k \leq s+1$. \newline
$\diamond$ Estimate of $J_1$: A straightforward computation yields that
\begin{align*}
\begin{aligned}
J_1 &= -\int_{\T^3} \nabla^k n \cdot \nabla^k\lt( \nabla n \cdot v + (1+n)\nabla \cdot v\rt)dx\cr
&= - \int_{\T^3} \nabla^k n \cdot \lt(v \cdot \nabla^{k+1}n\rt)dx - \int_{\T^3} [\nabla^k,v\cdot \nabla ]n \cdot \nabla^k n \,dx - \int_{\T^3} \nabla^k n \cdot \nabla^k \lt( (1+n)\nabla \cdot v\rt)dx\cr
&=\sum_{i=1}^3 J_1^i,
\end{aligned}
\end{align*}
where $J_1^i,i=1,2,3$ are estimated as follows.
\begin{align*}
\begin{aligned}
J_1^1 &\leq \|\nabla \cdot v\|_{L^\infty}\|\nabla^k n\|_{L^2}^2\leq C\epsilon_1\|\nabla^k n\|_{L^2}^2,\cr
J_1^2 &\leq \|[\nabla^k,v \cdot \nabla]n\|_{L^2}\|\nabla^k n\|_{L^2} \leq C\epsilon_1\|\nabla^k n\|_{L^2}^2,\cr
J_1^3 &= -\int_{\T^3}\nabla^{k-1}\pa_i n \lt( \pa_i n \nabla^{k-1}\pa_j v_j + (n+1)\nabla^{k-1}\pa_{ij}v_j\rt)dx- \int_{\T^3} \nabla^{k-1}\pa_i n \lt( (\nabla^{k-1}\pa_i n)\pa_j v_j + \nabla^{k-1}n \pa_{ij}v_j\rt)dx\cr
&\quad - \delta_{k-2}\sum_{1 \leq l < k-1}\binom{k-1}{l}\int_{\T^3} \nabla^{k-1}\pa_i n \lt( \nabla^l \pa_i n \nabla^{k-1-l}\pa_j v_j + \nabla^l n \nabla^{k-1-l}\pa_{ij}v_j\rt)dx\cr
&\leq C\|\nabla^k n\|_{L^2}\|\nabla n\|_{L^\infty}\|\nabla^k v\|_{L^2} + \int_{\T^3}(n+1)\nabla^{k-1}\pa_{ij} n \cdot \nabla^{k-1}\pa_i v_j\,dx + C\|\nabla^k n\|_{L^2}\|\nabla^{k-1}n\|_{L^2}\|\nabla^2 v\|_{L^\infty}\cr
&\quad + C\|\nabla^k n\|_{L^2}^2\|\nabla v\|_{L^\infty}+ C\delta_{k-2}\lt(\|\nabla^k n\|_{L^2}\|\nabla^2 n\|_{H^{k-2}}\|\nabla^2 v\|_{H^{k-2}} + \|\nabla^k n\|_{L^2}\|\nabla n\|_{H^{k-2}}\|\nabla^3 v\|_{H^{k-2}}\rt)\cr
&\leq C\epsilon_1\|\nabla^k n\|_{L^2}^2 + C\epsilon_1\|\nabla^k v\|_{L^2}^2 + C\epsilon_1\delta_{k-2}\lt( \|\nabla^2 n\|_{H^{k-2}}^2 + \|\nabla^k n\|_{L^2}^2 + \|\nabla^3 v\|_{H^{k-2}}^2\rt)\cr
&\quad + \int_{\T^3}(n+1)\nabla^{k-1}\pa_{ij} n \cdot \nabla^{k-1}\pa_i v_j\,dx.
\end{aligned}
\end{align*}
Here we used the following estimates for the first term in $J_1^3$.
\begin{align*}
\begin{aligned}
&-\int_{\T^3}\nabla^{k-1}\pa_i n \lt( \pa_i n \nabla^{k-1}\pa_j v_j + (n+1)\nabla^{k-1}\pa_{ij}v_j\rt)dx\cr
&\quad = -\int_{\T^3} (\nabla^{k-1}\pa_i n)\pa_i n \nabla^{k-1}\pa_j v_j \,dx + \int_{\T^3} \lt( (\nabla^{k-1}\pa_{ij}n)(n+1) + (\nabla^{k-1}\pa_i n)\pa_j n \rt)\pa_i v_j\,dx\cr
&\quad \leq C\|\nabla^k n\|_{L^2}\|\nabla n\|_{L^\infty}\|\nabla^k v\|_{L^2} + \int_{\T^3}(n+1)\nabla^{k-1}\pa_{ij} n \cdot \nabla^{k-1}\pa_i v_j\,dx.
\end{aligned}
\end{align*}
Thus we obtain
\[
J_1 \leq C\epsilon_1\|\nabla^2 n\|_{H^{k-2}}^2 + C\epsilon_1\|\nabla^2 v\|_{H^{k-1}}^2+ \int_{\T^3} (1+n)\nabla^{k-1}\pa_{ij}n \cdot \nabla^{k-1}\pa_i v_j \,dx.
\]
$\diamond$ Estimate of $J_2$: For this, we get
\begin{align*}
\begin{aligned}
J_2 &= -\frac{1}{2\mu + \lambda}\int_{\T^3} (1+n)^2 \nabla^{k-1}v \cdot \lt( \nabla^k(\nabla n \cdot v) + \nabla^k((1+n)\nabla\cdot v)\rt) dx\cr
&=\frac{1}{2\mu + \lambda}\int_{\T^3} \lt( 2(1+n)\nabla n \cdot \nabla^{k-1}v + (1+n)^2\nabla^k v\rt)\cdot \nabla^{k-1}(\nabla n \cdot v)\,dx\cr
&=\frac{1}{2\mu + \lambda}\int_{\T^3} \lt( 2(1+n)\nabla n \cdot \nabla^{k-1}v + (1+n)^2\nabla^k v\rt)\cdot \nabla^{k-1}((1+n)\nabla\cdot v)\, dx\cr
&=:J_2^1 + J_2^2.\cr
\end{aligned}
\end{align*}
Here $J_2^1$ is estimated as follows.
\begin{align*}
\begin{aligned}
J_2^1 &\leq C\sum_{1 \leq l \leq k-1}\int_{\T^3} \lt( |\nabla n||\nabla^{k-1} v| + |\nabla^k v|\rt)|\nabla^l v| |\nabla^{k-l} n|\,dx+ C\int_{\T^3} \lt( |\nabla n||\nabla^{k-1}v| + |\nabla^k v| \rt)|v||\nabla^k n|\,dx\cr
&\leq C\sum_{1 \leq l \leq k-1}\lt( \epsilon_1 \|\nabla^{k-1} v\|_{L^2} + \|\nabla^k v\|_{L^2}\rt)\|\nabla^l v\|_{H^1}\|\nabla^{k-l} n\|_{H^1} + C\epsilon_1\lt(\|\nabla^{k-1} v\|_{L^2} + \|\nabla^k v\|_{L^2}\rt)\|\nabla^k n\|_{L^2}\cr
&\leq C\epsilon_1\|\nabla^k v\|_{L^2}^2 + C\epsilon_1\|\nabla n\|_{H^{k-1}}^2,\cr
\end{aligned}
\end{align*}
where we again used $\|\nabla v\|_{L^2} \leq C\|\nabla^l v\|_{L^2}$ for $l \in [1,k+1]$ due to $k \geq 2$. Similarly, we also find
\begin{align*}
\begin{aligned}
J_2^2 &\leq C\sum_{1 \leq l \leq k-1}\int_{\T^3} \lt(|\nabla n||\nabla^{k-1} v| + |\nabla^k v|\rt)|\nabla^l n| |\nabla^{k-l}v|\,dx+ C\int_{\T^3} \lt(|\nabla n||\nabla^{k-1} v| + |\nabla^k v|\rt)|1+n| |\nabla^k v|\,dx\cr
&\leq C\sum_{1 \leq l \leq k-1}\lt(\epsilon_1\|\nabla^{k-1}v\|_{L^2} + \|\nabla^k v\|_{L^2}\rt)\|\nabla^l n\|_{H^1}\|\nabla^{k-l}v\|_{H^1} + C\epsilon_1\|\nabla^{k-1} v\|_{L^2}\|\nabla^k v\|_{L^2} + C\|\nabla^k v\|_{L^2}^2\cr
&\leq C\epsilon_1\lt(\|\nabla^{k-1} v\|_{L^2} + \|\nabla^k v\|_{L^2}\rt)\|\nabla v\|_{H^{k-1}} + C\|\nabla^k v\|_{L^2}^2\cr
&\leq C\|\nabla^k v\|_{L^2}^2,
\end{aligned}
\end{align*}
 Thus we have
\[
J_2 \leq C\|\nabla^k v\|_{L^2}^2 + C\epsilon_1\|\nabla n\|_{H^{k-1}}^2.
\]

\noindent $\diamond$ Estimate of $J_3$: Note that
\begin{align*}
\begin{aligned}
J_3 &= -\frac{1}{2\mu + \lambda}\int_{\T^3} (1+n)^2\nabla^k n\cdot\lt(v \cdot \nabla^k v - [\nabla^{k-1}, v \cdot \nabla]v - \nabla^{k-1}\lt( \frac{\rho}{1+n}(u-v)\rt) \rt)dx\cr
&\quad -\frac{1}{2\mu + \lambda}\int_{\T^3}(1+n)^2\nabla^k n\cdot\lt( \nabla^{k-1}\lt( \frac{\nabla p(n+1)}{n+1}\rt) + \nabla^{k-1}\lt( \frac{Lv}{n+1}\rt) \rt)dx\cr
&=:\sum_{i=1}^5 J_3^i,
\end{aligned}
\end{align*}
where $J_3^i,i=1,2,3$ are easily estimated as follows.
\begin{align*}
\begin{aligned}
J_3^1 &\ls \|v\|_{L^\infty}\|\nabla^k n\|_{L^2}\|\nabla^k v\|_{L^2} \leq C\epsilon_1\|\nabla^k n\|_{L^2}\|\nabla^k v\|_{L^2},\cr
J_3^2 &\ls \|\nabla v\|_{L^\infty}\|\nabla^k n\|_{L^2}\|\nabla^k v\|_{L^2} \leq C\epsilon_1\|\nabla^k n\|_{L^2}\|\nabla^{k} v\|_{L^2},\cr
J_3^3 &\ls \|\nabla^k n\|_{L^2}\lt\|\nabla^{k-1}\lt(\frac{\rho}{1+n}(u-v)\rt)\rt\|_{L^2} \leq C\epsilon_1\|\nabla^k n\|_{L^2}\|u-v\|_{H^{k}},\cr
\end{aligned}
\end{align*}
due to $\|\nabla v\|_{L^2} \leq C\|\nabla^k v\|_{L^2}$ for $k \in [1,s+1]$. 
For the estimate of $J_3^4$, we get
\begin{align*}
\begin{aligned}
J_3^4 &= - \frac{\gamma}{2\mu + \lambda}\int_{\T^3} (1+n)^2\nabla^k n \cdot\nabla^{k-1}\lt( (1+n)^{\gamma-2}\nabla n \rt)dx\cr
&=-\frac{\gamma}{2\mu + \lambda}\sum_{1\leq l \leq k-1}\binom{k-1}{l}\int_{\T^3}(1+n)^2 \nabla^k n \cdot \lt(\nabla^l\lt( (1+n)^{\gamma-2}\rt)\nabla^{k-l}n\rt)dx\cr
&\quad - \frac{\gamma}{2\mu + \lambda}\int_{\T^3}(1+n)^\gamma |\nabla^k n|^2 dx\cr
&=: J_3^{4,1} + J_3^{4,2}.
\end{aligned}
\end{align*}
Here $J_3^{4,1}$ is estimated as follows.
\begin{align*}
\begin{aligned}
J_3^{4,1} &\ls \sum_{1\leq l \leq k-1}\|\nabla^k n\|_{L^2}\|\nabla^l (1+n)^{\gamma-2}\|_{L^4}\|\nabla^{k-l}n\|_{L^4}\cr
&\ls \|\nabla^k n\|_{L^2}\|\nabla (1+n)^{\gamma-2}\|_{H^{k-1}}\|\nabla n\|_{H^{k-1}}\cr
&\leq C\epsilon_1\|\nabla^k n\|_{L^2}\|\nabla n\|_{H^{k-1}}\cr
&\leq C\epsilon_1\|\nabla n\|_{H^{k-1}}^2,
\end{aligned}
\end{align*}
where we used
\begin{align*}
\begin{aligned}
\|\nabla (1+n)^{\gamma-2}\|_{H^{k-1}} &= \lt\| \nabla\lt( \frac{p}{(1+n)^2}\rt)\rt\|_{H^{k-1}}\cr
&\ls \lt\| \frac{\nabla p}{(1+n)^2}\rt\|_{H^{k-1}} + \lt\|p\frac{\nabla n}{(1+n)^3} \rt\|_{H^{k-1}}\cr
&\ls \|\nabla p\|_{H^{k-1}}\lt\|\frac{1}{(1+n)^2} \rt\|_{L^\infty} + \|\nabla p\|_{L^\infty}\lt\|\frac{1}{(1+n)^2} \rt\|_{H^{k-1}}\cr
&\quad + \|p\|_{H^{k-1}}\lt\|\frac{\nabla n}{(1+n)^3} \rt\|_{L^\infty} + \|p\|_{L^\infty}\lt\| \frac{\nabla n}{(1+n)^3}\rt\|_{H^{k-1}}\cr
&\leq C\epsilon_1.
\end{aligned}
\end{align*}
For the $J_3^{4,2}$, we have 
\[
J_3^{4,2} \leq - \frac{C\gamma}{2\mu + \lambda}\|\nabla^k n\|_{L^2}^2,
\]
due to $\|n\|_{L^\infty }\leq \epsilon_1 \ll 1$. Thus we get
\[
J_3^4 \leq C\epsilon_1\|\nabla n\|_{H^{k-1}}^2 - \frac{C\gamma}{2\mu + \lambda}\|\nabla^k n\|_{L^2}^2.
\] 
We also find
\begin{align*}
\begin{aligned}
J_3^5 &= \frac{1}{2\mu + \lambda}\int_{\T^3} (1+n)^2\nabla^k n \cdot\lt( \frac{\mu\nabla^{k-1}(\nabla \cdot \nabla v)}{n+1} + \frac{\mu + \lambda}{n+1}\nabla^{k-1}\nabla \nabla \cdot v\rt) dx\cr
& \quad + \frac{1}{2\mu+\lambda}\sum_{1\leq l \leq k-1}\binom{k-1}{l}\int_{\T^3} (1+n)^2 \nabla^k n \nabla^l\lt( \frac{\mu}{n+1}\rt) \nabla^{k-1-l}(\nabla \cdot \nabla v)\,dx\cr
&\quad + \frac{1}{2\mu + \lambda}\sum_{1\leq l \leq k-1}\binom{k-1}{l}\int_{\T^3} (1+n)^2 \nabla^k n \nabla^l\lt( \frac{\mu + \lambda}{n+1}\rt) \nabla^{k-1-l}(\nabla \nabla \cdot v)\,dx\cr
&=:\sum_{i=1}^3 J_3^{5,i},
\end{aligned}
\end{align*}
where $J_3^{5,i},i=1,2,3$ are estimated as follows.
\begin{align*}
\begin{aligned}
J_3^{5,1} &= \frac{\mu}{2\mu + \lambda}\int_{\T^3}(1+n)\nabla^k n \cdot \nabla^{k-1}(\nabla\cdot \nabla v)dx+ \frac{\mu+\lambda}{2\mu + \lambda}\int_{\T^3} (1+n)\nabla^k n \cdot \nabla^{k-1}(\nabla \nabla \cdot v) dx\cr
&=-\frac{\mu}{2\mu + \lambda}\int_{\T^3} \lt( \pa_j n \nabla^{k-1} \pa_i n + (1+n)\nabla^{k-1}\pa_{ij}n \rt) \nabla^{k-1}\pa_j v_i\,dx\cr
&\quad - \frac{\mu + \lambda}{2\mu + \lambda}\int_{\T^3} \lt( \pa_j n \nabla^{k-1}\pa_i n + (1+n)\nabla^{k-1}\pa_{ij}n \rt)\nabla^{k-1}\pa_i v_j\,dx\cr
&= -\frac{1}{2\mu + \lambda}\int_{\T^3} \lt( \mu \pa_j n \nabla^{k-1}\pa_i n + (\mu + \lambda)\pa_i n \nabla^{k-1}\pa_j n\rt) \nabla^{k-1}\pa_j v_i\, dx\cr
&\quad - \int_{\T^3} (1+n)\nabla^{k-1}\pa_{ij} n \cdot \nabla^{k-1}\pa_j v_i\, dx\cr
&\leq C\|\nabla^k n\|_{L^2}\|\nabla^k v\|_{L^2}\|\nabla n\|_{L^\infty} - \int_{\T^3} (1+n)\nabla^{k-1}\pa_{ij} n \cdot \nabla^{k-1}\pa_j v_i\, dx\cr
&\leq C\epsilon_1\|\nabla^k n\|_{L^2}^2 + C\epsilon_1\|\nabla^k v\|_{L^2}^2 - \int_{\T^3} (1+n)\nabla^{k-1}\pa_{ij} n \cdot \nabla^{k-1}\pa_j v_i\, dx,
\end{aligned}
\end{align*}
and
\begin{align*}
\begin{aligned}
J_3^{5,2} + J_3^{5,3} &\ls \sum_{1\leq l \leq k-1}\|\nabla^k n\|_{L^2}\lt\| \nabla^l\lt( \frac{1}{n+1}\rt)\rt\|_{L^4}\|\nabla^{k+1-l} v\|_{L^4}\cr
&\ls \|\nabla^k n\|_{L^2}\lt\|\nabla\lt( \frac{1}{n+1}\rt)\rt\|_{H^{k-1}}\|\nabla^2 v\|_{H^{k-1}}\cr
&\leq C\epsilon_1\|\nabla^k n\|_{L^2}\|\nabla^2 v\|_{H^{k-1}}\cr
&\leq C\epsilon_1\|\nabla^k n\|_{L^2}^2 + C\epsilon_1\|\nabla v\|_{H^k}^2.
\end{aligned}
\end{align*}
This yields 
\[
J_3^5 \leq C\epsilon_1\|\nabla^k n\|_{L^2}^2 + C\epsilon_1\|\nabla v\|_{H^k}^2- \int_{\T^3} (1+n)\nabla^{k-1}\pa_{ij} n \cdot \nabla^{k-1}\pa_j v_i\, dx,
\]
and thus we have
\begin{align*}
\begin{aligned}
J_3 &\leq C\epsilon_1\|\nabla n\|_{H^{k-1}}^2 + C\epsilon_1\|\nabla v\|_{H^k}^2 + C\epsilon_1\|u - v\|_{H^{k}}^2 - \frac{C\gamma}{2\mu+\lambda}\|\nabla^k n\|_{L^2}^2- \int_{\T^3} (1+n)\nabla^{k-1}\pa_{ij} n \cdot \nabla^{k-1}\pa_j v_i\, dx.
\end{aligned}
\end{align*}
$\diamond$ Estimate of $J_4$: Since
\[
J_4 = -\frac{2}{2\mu + \lambda}\int_{\T^3} (1+n)\nabla^{k-1} n \cdot \nabla^{k-1}v \lt( \nabla n \cdot v + (1+n)\nabla \cdot v\rt)dx,
\]
we easily obtain
\begin{align*}
\begin{aligned}
J_4 &\ls \|\nabla^{k-1}n\|_{L^2}\|\nabla^{k-1}v\|_{L^2}\lt( \|\nabla n \cdot v\|_{L^\infty} + \|\nabla v\|_{L^\infty}\rt) \leq C\epsilon_1\|\nabla^{k-1}n\|_{L^2}^2 + C\epsilon_1\|\nabla^{k-1}v\|_{L^2}^2
\end{aligned}
\end{align*}
We finally combine all estimates above to have
\begin{align*}
\begin{aligned}
&\frac{d}{dt}\int_{\T^3} \nabla^k n \cdot \lt( \frac{\nabla^k n}{2} + \frac{(n+1)^2}{2\mu + \lambda} \nabla^{k-1} v\rt) dx + \frac{C\gamma}{2\mu + \lambda}\|\nabla^k n\|_{L^2}^2\cr
&\qquad \leq C\epsilon_1\|\nabla n\|_{H^{k-1}}^2 + C\epsilon_1\|\nabla v\|_{H^k}^2 + C\epsilon_1\|u-v\|_{H^{k}}^2 + C\|\nabla^k v\|_{L^2}^2,
\end{aligned}
\end{align*}
and this deduces 
\begin{align*}
\begin{aligned}
&\frac{d}{dt}\int_{\T^3} \nabla^k n \cdot \lt( \frac{\nabla^k n}{2} + \frac{(n+1)^2}{2\mu + \lambda} \nabla^{k-1} v\rt) dx + C_3\|\nabla^k n\|_{L^2}^2\cr
&\qquad \leq C\epsilon_1\|\nabla v\|_{H^k}^2 + C\epsilon_1\|\nabla^2 u\|_{H^{k-2}}^2 + C\|\nabla^k v\|_{L^2}^2 + C\mathcal{S}_0(1),
\end{aligned}
\end{align*}
due to $\|\nabla n\|_{L^2} \leq C\|\nabla^l n\|_{L^2}$ for $l \in [1,k]$.

%
%
%
%

\section*{Acknowledgments}
Y.-P. Choi was partially supported by Basic Science Research Program through the National Research Foundation of Korea (NRF) funded by the Ministry of Education, Science and Technology (2012R1A6A3A030 -39496) and Engineering and Physical Sciences Research Council(EP/K00804/1). Y.-P. Choi also acknowledges the support of the ERC-Starting Grant HDSPCONTR ``High-Dimensional Sparse Optimal Control''. 
B. Kwon was supported by Basic Science Research Program through the National Research Foundation of Korea(NRF) funded by the Ministry of Science, ICT and Future Planning (2015R1C1A1A02037662)

%
%
%
%

\end{document}